\newtheorem{thm}{Theorem}[section]
\newtheorem{cor}[thm]{Corollary}
\newtheorem{cond}[thm]{Condition}
\newtheorem{lem}[thm]{Lemma}
\newtheorem{prop}[thm]{Proposition}
\newtheorem{remark}[thm]{Remark}
\newenvironment{rem}[1][]{\begin{remark}[#1]\rm}{\end{remark}}
\newtheorem{defin}[thm]{Definition}
\newtheorem{example}[thm]{ Example}
\newtheorem{fact}[thm]{Fact}
\begin{document}

\title{Viral Marketing On Configuration Model}

\author{Bart{\l}omiej B{\l}aszczyszyn\thanks{Inria/ENS,
23 av. d'Italie 75214 Paris, France;  Bartek.Blaszczyszyn@ens.fr}
and Kumar Gaurav\thanks{UPMC/Inria, 23 av. d'Italie 75214 Paris, France;
Kumar.Gaurav@inria.fr}}

\maketitle

\begin{abstract}
We consider propagation of influence on a Configuration Model, where each vertex can be influenced by any of its neighbours but in its turn, it can only influence a random subset of its neighbours.  Our (enhanced) model is described by the total degree of the typical vertex, representing the total number of its neighbours and the transmitter degree, representing the number of neighbours it is able to influence.  We give a condition involving the joint distribution of these two degrees, which if satisfied would allow with high probability the influence to reach a non-negligible fraction of the vertices, called a {\em big (influenced) component}, provided that the source vertex is chosen from a set of \textit{good pioneers}. We show that asymptotically the big component is essentially the same, regardless of the good pioneer we choose, and we explicitly evaluate the asymptotic relative size of this component.  Finally, under some additional technical assumption we calculate the relative size of the set of good pioneers.  The main technical tool employed is the ``fluid limit'' analysis of the joint exploration of the configuration model and the propagation of the influence up to the time when a big influenced component is completed. This method was introduced in Janson~\& Luczak~(2008) to study the giant component of the configu\-ration model. Using this approach we study also a reverse dynamic, which traces all the possible sources of influence of a given vertex, and which by a new ``duality'' relation allows to characterise the set of good pioneers.
\end{abstract}

{Keywords: {\em enhanced Configuration Model,  influence propagation,
    backtracking, duality, big component}}

\section{Introduction}
The desire for understanding the mechanics of complex
networks~\cite{albert2002statistical,newman2003structure}, describing a wide range of systems in nature and society,
motivated many applied and theoretical investigations of the last two decades.
A motivation for our work can come from the phenomenon of viral
marketing in social networks: A person after getting acquainted with an advertisement (or a news article or a Gangnam style video, for that matter) through one of his
 ``friends'', may decide to share it with some (not necessarily all) 
of his friends, who will, in turn, pass it along to some of their
friends, and so on. The campaign is successful if 
starting from a relatively small number of initially targeted
persons, the influence (or information) can spread as an epidemic
``infecting'' a 
non-negligible fraction of the population.

\paragraph{Enhanced Configuration Model}
Traditionally, social networks have been modeled as random
graphs~\cite{durrett2007random,Remco},
where the vertices denote the 
individuals and edges connect individuals who know one another.
The Configuration Model is considered as a useful approximation in
this matter, 
and we assume it for our study of the viral
marketing.  It is a random (multi-)graph, whose vertices have
prescribed degrees, realized  by  half-edges emanating from them
and  uniformly pair-wise matched to each other to create edges.
In order to model a selective character of the influence propagation 
(each vertex can be influenced by any of its
neighbours but in its turn, it can only influence a subset of its
neighbours),
we  enhance the original Configuration
  Model  by 
considering {\em two types of half-edges}. 
{\em Transmitter half edges} of a given vertex represent links through which 
this vertex will influence (pass the information once it has it) 
to its neighbours. Its {\em receiver
half-edges} represent links through which this vertex 
will not propagate the information to its neighbours.
The neighbours  receive the information both through their transmitter and
receiver half-edges matched to a transmitter half edge of the
information sender. 
The two types of half-edges are not distinguished
during the uniform pair-wise matching of all half-edges, 
but only to trace the propagation of information.
Assuming the usual consistency conditions for the numbers 
of transmitter and receiver half-edges,
the Enhanced Configuration Model is asymptotically (when the
number of vertices $n$ goes to infinity)  described by the vector  of two, 
not necessarily independent, integer valued random variables,
representing the {\em transmitter} and {\em receiver degree} of the
typical vertex.  Equivalently, we can consider the {\em total vertex
degree},  representing the total number of friends of a 
person and its transmitter degree, representing 
the number of friends he/she is able to influence.

\paragraph{Results}
We consider the advertisement campaign started from some 
initial target (source vertex) and following
the aforementioned dynamic on a realization of the  Enhanced Configuration
Model of the total number of vertices  $n$.
The results are formulated with high probability (whp), i.e.  
with probability approaching one as $n\to\infty$.

First, we give a condition involving the total degree 
and the transmitter degree distributions of the Enhanced Configuration
Model,  which if satisfied, would allow   whp the advertisement campaign to
reach a non-negligible fraction ($O(n)$) of the
population, called a {\em big (influenced) component}, 
provided   that the initial
target  is chosen from a set
of \textit{good pioneers}. Further in this case, we show that 
asymptotically the {\em big component 
is essentially the same} 
regardless of the good pioneer chosen, and we
explicitly evaluate the asymptotic size of this component relative
to~$n$.  The essential uniqueness of the big component means 
that the subsets of influenced vertices reached from two different
good pioneers  differ by at most $o(n)$
vertices  whp.   Finally,  under some additional technical assumption  we calculate the relative size of the set of good pioneers.

\paragraph{Methodology}
A standard technique for the analysis of diffusion of information on the
Configuration Model consists in 
simultaneous exploration of the model and the propagation of the influence.
We adopt this technique and, more precisely, 
the approach  proposed in \cite{JanLuc} for the study of the
giant component of the (classical) Configuration Model. 
In this approach, instead of the branching process
approximating the early stages of the graph exploration,  one uses  
a ``fluid limit'' analysis of the process up to the time when 
the exploration of the big component is completed. We tailor this
method to our specific dynamic of influence propagation
and calculate the relative size of the big influenced component, as well as
prove its essential uniqueness.

A fundamental difference with respect to the study of the
giant component of the classical model stems from the directional
character of our propagation dynamic. Precisely, 
the edges matching a transmitter and a receiver half-edge
can relay the influence from the transmitter half-edge to the receiver one,
but not the other way around. This means that  the good pioneers do not need
to belong to the big (influenced) component, and vice versa. 
In this context,  we introduce a {\em reverse dynamic}, in which 
a message (think of an ``acknowledgement'') can be sent in the
reversed direction on every edge (from an arbitrary half-edge to the
receiver one),  which traces all the possible sources of influence of
a given vertex.  
This reversed dynamic can be studied using the same approach as the
original one. In particular, one can establish the essential uniqueness
of the big component of the reversed process as well as calculate its
relative size. Interestingly, this relative size coincides with the
probability of the non-extinction of the branching process
approximating the initial phase of the original exploration process,
whence the hypothesis that the  big component of the reverse process
coincides with the set of good pioneers. We  prove this
conjecture under some additional (technical) assumption.
We believe the method of introducing a reverse process
to derive results for the original one has not
been seen in a related context in the existing literature.

\paragraph{Related Work}
The propagation of influence through a network has been previously
studied in various contexts. The Configuration Model 
has formed the base for an increasing number of influence propagation
studies, of which one  relevant to the phenomenon of viral networking in social networks is discussed in \cite{Marcam} and \cite{Marc2}, where a vertex in the network gets influenced 
only if a certain proportion of its neighbours have already been influenced. This interesting propagation dynamic is further studied by introducing 
\textit{cliques} in Configuration Model to observe the impact of clustering on the size of the population influenced (see \cite{Marc3},\cite{Marc4}).  
This dynamic is a kind of \textit{pull model} where influence propagation depends on whether a vertex decides to receive the influence
from its neighbours.  We study a \textit{push model}, where 
the influence propagation depends on whether a vertex decides to
transmit the influence. A propagation dynamic where 
every influenced node, at all times, keeps choosing  one of its neighbours
uniformly at random and transmits the message to it is studied on a
$d$-regular graph in \cite{Nikola}. This dynamic is close in its
spirit to the one we considered in this paper, however the process stops
when {\em all} nodes receive the message, and this stopping time is studied in
the paper. The same dynamic but restricted to some (possibly random)
maximal number of transmissions allowed for each vertex 
is considered  in \cite{Comets}  on a complete graph. 
This can be thought as a special case of our dynamic (although we
study it on a different underlying graph) if we assume that the 
transmitter and receiver degrees correspond to  the number of 
collected and non-collected coupons, respectively, 
in the classical coupon collector  
problem with the number of coupons being the  vertex degree and the
number of trials being the number of allowed transmissions.
In a more applied context, a rudimentary special case of our dynamic 
of influence propagation has actually been studied on real-world
networks like \textit{flixster} 
and \textit{flickr} (see \cite{Goyal}).

\paragraph{Paper organization}
The remaining part of this paper is organized as follows.
In the next section we describe our model and formulate the results. In Sections~\ref{s.Forward} and~\ref{s.Reverse}
we analyze, respectively,  the original and reversed dynamic of influence propagation.   The relations between the two dynamics are explored in Section~\ref{s.Duality}.

\section{Notation and Results}    
\label{s.Results}
Given a degree sequence $(d_i^{(n)})_1^n$ for $n$ vertices labelled $1$ to $n$, Configuration Model, denoted $G^*(n,(d_i)_1^n)$, 
is a random multigraph obtained by giving $d_i$ half-edges to each vertex $i$ and then uniformly matching pair-wise the set of 
half-edges. Conditioning the Configuration Model to be simple, we obtain a uniform random graph with the given degree sequence, 
denoted by $G(n,(d_i)_1^n)$. Since it is convenient to work with the Configuration Model, we will prove all our results for the 
Configuration Model and the corresponding results for the uniform random graph can be obtained by passing through a standard 
conditioning procedure (see, for example, \cite{Remco}).

Further, in our model, we represent the degree, $d_i$, of each vertex $i$ as the sum of two (not necessarily independent) 
degrees: transmitter degree, $d_i^{(t)}$ and receiver degree, $d_i^{(r)}$.

We will asssume the following set of consistency conditions for our enhanced  Configuration Model, which are analogous to 
those assumed for Configuration Model in \cite{JanLuc}.

\begin{cond}
\label{degden}
For each $n$, $\text{\bf d}^{(n)}=(d_i)_1^n$,is a sequence of non-negative integers such that $\sum_{i=1}^nd_i:=2m$ is even 
and for each $i$, $d_i=d_i^{(r)}+d_i^{(t)}$. 
For $k \in \mathbb{N}$, let $u_{k,l}=\vert \{i:d_i^{(r)}=k, d_i^{(t)}=l\} \vert$, and $D_n^{(r)}$ and $D_n^{(t)}$ be the
receiver and transmitter degrees respectively of a uniformly chosen vertex in our model, i.e., 
$\mathbb{P}(D_n^{(r)}=k, D_n^{(t)}=l)=u_{k,l}/n$. Let $D^{(r)}$ and $D^{(t)}$
be two random variables taking value in non-negative integers with joint probability distribution 
$(p_{v,w})_{(v,w)\in \mathbb{N}^2}$, and $D := D^{(r)} + D^{(t)}$.
Then the following hold.
\begin{enumerate}[(i)]
 \item $\frac{u_{k,l}}{n} \to p_{k,l}$ for all $(k,l) \in \mathbb{N}^2$. \label{a1}
 \item $\mathbb{E}[D] = \mathbb{E}[D^{(r)}+D^{(t)}] = \sum_{k,l}(k+l)p_{k,l} \in (0,\infty)$. Let $\lambda_r = \mathbb{E}[D^{(r)}]$, 
$\lambda_t = \mathbb{E}[D^{(t)}]$ and $\lambda=\lambda_r+\lambda_t$. \label{a2}
 \item $\sum_{i=1}^n(d_i)^2= O(n)$. \label{a3}
 \item $\mathbb{P}(D=1)>0$. \label{a4}
\end{enumerate}
\end{cond}

Let $g(x,y):=\mathbb{E}[x^{D^{(r)}}y^{D^{(t)}}]$ be the joint probability generating function of $(p_{v,w})_{(v,w)\in \mathbb{N}^2}$. 
Further let
\begin{equation}
 h(x):=x\left.\frac{\partial g(x,y)}{\partial y} \right|_{y=x} = \mathbb{E}[D^{(t)}x^D],
\end{equation}
and
\begin{equation}
\label{hx}
 H(x):=\lambda x^2-\lambda_r x-h(x).
\end{equation}

If two neighbouring vertices $x$ and $y$ are connected via the pairing of a transmitter half-edge of $x$ with any half-edge of $y$, 
then $x$ has the ability to directly influence $y$. More generally, for any two vertices $x$ and $y$ in the 
graph and $k\geq 1$, if there exists a set of vertices $x_0=x,x_1,.....,x_{k-1},x_k=y$ such that $\forall i:1 \leq i \leq k$, $x_{i-1}$ 
has the ability to directly influence $x_{i}$, we say that $x$ has the
ability to influence $y$ and denote it by $x \to y$; 
in other words, $y$ can be influenced starting from the initial source $x$. Let $C(x)$ be the set of vertices of $G(n,(d_i)_1^n)$ which are influenced starting 
from an initial source of influence, $x$, 
until the process stops, i.e.,
\begin{equation}
C(x)=\left\{y \in v(G(n,(d_i)_1^n)) : x\to y\right\},
\end{equation}
where $v(G(n,(d_i)_1^n))$ denotes the set of all the vertices of $G(n,(d_i)_1^n)$. We use $\left| . \right|$ to denote the number of 
elements in a set here, although at other times we also use the symbol to denote the absolute value, which would be clear from the context.
We have the following theorems for the forward influence propagation process.

\begin{thm}
\label{inf1out}
Suppose that Condition \ref{degden} holds and consider the random graph $G(n,(d_i)_1^n)$, letting $n \to \infty$.

If $\mathbb{E}[D^{(t)}D] > \mathbb{E}[D^{(t)}+D]$, then there is a unique $\xi \in (0,1)$ such that $H(\xi)=0$ and 
there exists at least one $x_n$ in $G(n,(d_i)_1^n)$ such that 
\begin{equation}
 \frac{\left|C(x_n)\right|}{n} \xrightarrow{p} 1-g(\xi,\xi) > 0.
\end{equation} 

\end{thm}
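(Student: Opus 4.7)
The plan is to follow the fluid-limit technique of Janson and Luczak~\cite{JanLuc}, originally used for the giant component of the Configuration Model, adapted to the directed influence dynamic on the Enhanced Configuration Model and its two types of half-edges.

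I first handle the analytic existence and uniqueness of $\xi$. Direct computation gives $H(1)=\lambda-\lambda_r-h(1)=\lambda_t-\lambda_t=0$ and $H(0)=-h(0)=0$ (since $D=0$ forces $D^{(t)}=0$), while $H'(1)=(\lambda_r+2\lambda_t)-\mathbb{E}[D^{(t)}D]$ is strictly negative under the hypothesis $\mathbb{E}[D^{(t)}D]>\mathbb{E}[D^{(t)}+D]$, so $H>0$ on some interval $(\xi_0,1)$. For uniqueness I rewrite $H(x)=0$ for $x>0$ as $\lambda x=\lambda_r+h(x)/x$; the right-hand side equals $\lambda_r+\sum_{k,l}l\,p_{k,l}\,x^{k+l-1}$, a power series with non-negative coefficients, hence convex on $(0,1]$. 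Since the difference with the linear $\lambda x$ is convex, vanishes at $x=1$ with positive derivative there, and is strictly positive at $0^+$ (Condition~\ref{degden} prevents $\lambda_r$ and $p_{0,1}$ from both vanishing), it has exactly one additional zero $\xi\in(0,1)$.

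The core of the argument is a joint exploration of the graph and the influence dynamic starting from a source $x_n$. I classify unpaired half-edges into four types according to whether the host vertex is influenced and whether the half-edge is transmitter or receiver; one \emph{active} (influenced-transmitter) half-edge is selected at each step and paired with a uniformly chosen partner, and a non-influenced partner's vertex joins the influenced set with its half-edges reclassified. Passing to the continuous-time formulation of~\cite{JanLuc} via independent exponential clocks on half-edges, I would prove by standard martingale concentration and Lipschitz drift coefficients that the scaled counts converge uniformly in probability to deterministic trajectories parametrised by $x\in[\xi,1]$: at level $x$ the fraction of surviving non-influenced vertices of type $(k,l)$ is $p_{k,l}x^{k+l}$ (summing to $g(x,x)$), and the scaled number of active half-edges is proportional to $H(x)/x$, vanishing exactly when $H(x)=0$. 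The exploration therefore halts at $x=\xi$, yielding $|C(x_n)|/n\to 1-g(\xi,\xi)$.

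To exhibit the required vertex $x_n$, I approximate the first $O(\log n)$ steps of the exploration from a uniformly chosen source by a Galton--Watson branching process whose offspring mean is $(\mathbb{E}[D^{(t)}D]-\lambda_t)/\lambda$; this is strictly greater than $1$ exactly under the hypothesis, so the process is supercritical with positive survival probability, and by standard coupling with the fluid limit a uniform random source leads to a component of size $(1-g(\xi,\xi))n+o(n)$ with positive limiting probability, whence whp at least one concrete vertex qualifies. The main obstacle I anticipate lies in the third paragraph: the two half-edge types yield four coupled scaled counts instead of the two in~\cite{JanLuc}, so the drift vector field and the closed-form identification of its trajectory with $p_{k,l}x^{k+l}$ demand careful bookkeeping, and one must rule out a premature halt at some $x\in(\xi,1)$ by lower-bounding the active count by $n\cdot H(x)/x>0$ there and controlling its fluctuations.
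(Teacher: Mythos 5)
Your analytic treatment of $\xi$ is sound and is essentially the paper's Lemma~\ref{lem5}: the paper also works with $\phi(x)=H(x)/x=\lambda x-\lambda_r-\sum_{k,l}lp_{k,l}x^{k+l-1}$ and uses its concavity (your convexity of $-\phi$) together with $H'(1)<0$ and $H'(0)\leq -p_{1,0}-p_{0,1}<0$; the only thing you gloss over is the degenerate case where the power series is affine, which the paper handles separately via Condition~\ref{degden}(iv). Your third paragraph likewise matches the paper's machinery (Lemmas~\ref{lem1}--\ref{lem5} and the identification $n^{-1}\widetilde A_T(t)\to H(e^{-t})$), although the bookkeeping is lighter than you fear: the paper tracks only the aggregates $L(t)$, $R(t)$, $S_T(t)$, $A_T(t)$ and controls the error $\widetilde S_T(t)-S_T(t)$ caused by restarts via Lemma~\ref{lem4}, rather than a four-dimensional drift system.

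The genuine gap is in your final paragraph, the exhibition of $x_n$. From ``a uniformly chosen source reaches a component of size $(1-g(\xi,\xi))n+o(n)$ with positive limiting probability'' you conclude ``whp at least one concrete vertex qualifies''; this is a non sequitur. A first-moment bound turns positive probability for a uniform source into $P(\exists\, v \text{ qualifying})\geq c+o(1)$ only, not into a statement holding whp, and the theorem genuinely needs whp existence, since $|C(x_n)|/n\xrightarrow{p}1-g(\xi,\xi)$ fails if with probability bounded away from zero no vertex has a component of the right size. Closing this would require a second-moment or sprinkling argument that you have not supplied. The paper avoids the issue entirely: it runs the restarted exploration (step C\ref{c1} re-seeds whenever the active transmitter count hits zero), defines $T_1$ as the \emph{last} restart before $\tau/2$ and $x_n$ as the vertex activated then, and shows from $H(e^{-t})>0$ on $[\epsilon,\tau-\epsilon]$ and $H(e^{-\tau-\epsilon})<0$ that whp $T_1\leq\epsilon$ and the next restart $T_2\in[\tau-\epsilon,\tau+\epsilon]$; the sandwich $|C''|\leq|C(x_n)|\leq|C'|+|C''|$ with $|C'|=o_p(n)$ and $|C''|/n\xrightarrow{p}1-g(\xi,\xi)$ then gives the conclusion deterministically in the fluid limit, with no branching-process survival estimate and no positive-probability-to-whp upgrade needed. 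You should either adopt that device or add the missing concentration step.
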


We denote $C(x_n)$ constructed in the proof of Theorem \ref{inf1out} by $C^*$. For every $\epsilon>0$, 
let 
\begin{equation*}
 \mathbb{C}^s(\epsilon):=\left\{x \in v(G(n,(d_i)_1^n)): \left|C(x)\right|/n < \epsilon\right\}
\end{equation*}
and 
\begin{equation*}
 \mathbb{C}^L(\epsilon):=\left\{x \in v(G(n,(d_i)_1^n)): \left|C(x)\vartriangle C^* \right|/n < \epsilon\right\},
\end{equation*}
where $\vartriangle$ 
denotes the symmetric difference. 

\begin{thm}
\label{inf2out}


Under assumptions of Theorem \ref{inf1out}, we have that
\begin{equation}
 \forall \epsilon, \quad  \frac{\left|\mathbb{C}^s(\epsilon)\right|+\left|\mathbb{C}^L(\epsilon)\right|}{n} \xrightarrow{p} 1.
\end{equation}
\end{thm}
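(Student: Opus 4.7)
The plan is to adapt the essential-uniqueness argument of Janson--Luczak (for the giant component of the classical configuration model) to our directional influence-propagation dynamic, reusing the fluid-limit machinery that already underpins Theorem \ref{inf1out}.

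First I would strengthen the fluid-limit analysis used for Theorem \ref{inf1out} into an initial-condition-independent statement: for every fixed $\delta>0$, whp every vertex $x$ with $|C(x)|\ge\epsilon n$ satisfies $\bigl|\,|C(x)|-(1-g(\xi,\xi))n\,\bigr|\le\delta n$. The exploration started from $x$ is described by the same Markov chain whose fluid limit gives the deterministic trajectory hitting zero at the value $(1-g(\xi,\xi))n$. Once the exploration has passed $\epsilon n$ visited vertices it has escaped the branching-process phase, and the remainder of the trajectory is pinned to the deterministic ODE solution up to $o(n)$ perturbations independently of the starting vertex; in particular, the exploration must continue all the way to the deterministic extinction time, producing a component of the stated size.

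Next I would split the vertices lying outside $\mathbb{C}^s(\epsilon)$ according to whether they belong to $C^*$ or not. For $x\in C^*$, transitivity of $\to$ gives $C(x)\subseteq C^*$: indeed, $x_n\to x$ and $x\to y$ imply $x_n\to y$. Combined with the previous step and $|C^*|=(1-g(\xi,\xi))n+o(n)$ from Theorem \ref{inf1out}, this yields $|C^*\setminus C(x)|=o(n)$, so $|C(x)\vartriangle C^*|=o(n)$ and $x\in\mathbb{C}^L(\epsilon')$ whp for any prescribed $\epsilon'>0$. The remaining case is $x\notin C^*$ with $|C(x)|\ge\epsilon n$: here I would run the forward exploration first from $x_n$ (uncovering $C^*$) and then from $x$ on the residual half-edge structure, arguing that in the reduced model the drift condition $\mathbb{E}[D^{(t)}D]>\mathbb{E}[D^{(t)}+D]$ fails because the influential ``mass'' has been absorbed into $C^*$, so no second big influenced component can emerge disjointly from $C^*$. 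Hence $C(x)$ must meet $C^*$ in $\Omega(n)$ vertices; picking any shared $y$, the forward closure $C(y)\subseteq C(x)\cap C^*$ together with the size identity from Step 1 forces $|C(x)\vartriangle C^*|=o(n)$. Combining the three cases and passing to a diagonal subsequence $\epsilon_k\downarrow 0$ yields the stated convergence in probability.

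The main obstacle is the last case: the directional structure prevents the clean ``remove $C^*$ and rerun the exploration'' reduction used in Janson--Luczak, because the residual half-edges of vertices outside $C^*$ do not by themselves constitute an independent enhanced configuration model. One must carefully condition on the degrees and the matching status of the half-edges uncovered during the first exploration, then re-derive the fluid-limit ODE for the second exploration on this conditional structure and verify that the supercriticality criterion fails in the residual. This conditional analysis is the technically delicate part of the argument and is where the directional nature of the propagation dynamic really bites.
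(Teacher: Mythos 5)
Your overall decomposition (first show all big influenced sets have the same asymptotic size, then treat vertices according to whether they lie inside or outside the big component) matches the shape of the paper's argument. Your treatment of $x\in C^*$ via forward-closedness ($C(x)\subseteq C^*$ by transitivity of $\to$) plus the size identity is essentially the paper's Lemma \ref{lem9}, which works with $C'\cup C''$ in place of $C^*$ but rests on the same containment idea. Your Step 1 corresponds to the paper's Lemma \ref{lem8}; note, however, that the paper proves only that the number of exceptional vertices is $o_p(n)$ (which suffices), and does so by a soft contradiction argument exploiting the uniform random choice of the initial vertex, rather than by pinning every sufficiently advanced exploration to the ODE trajectory. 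Your ``whp every vertex'' version is stronger than needed and would require an additional uniformity-over-starting-points argument that you do not supply.

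The genuine gap is in your third case, $x\notin C^*$ with $|C(x)|\ge\epsilon n$. You rest it on the claim that the exploration restarted after removing $C^*$ is subcritical, and you yourself flag that the residual is not an enhanced configuration model and that the required conditional analysis is delicate; that step is never carried out, and it is precisely the hard part of your route. Moreover, the fallback you offer --- pick a shared $y\in C(x)\cap C^*$ and use $C(y)\subseteq C(x)\cap C^*$ --- does not close the gap: because of the directionality, every $y$ in the intersection may have $|C(y)|=o(n)$ (for instance if it has few or no transmitter half-edges), so Step 1 need not apply to any such $C(y)$ and you cannot conclude $|C(x)\cap C^*|=|C^*|-o(n)$ this way. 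The paper avoids the residual-model analysis entirely: the vertex $z_n$ awakened at time $T_2$ is uniform among the sleeping vertices, $C(z_n)\subseteq C'\cup C''\cup C'''$ where $C'''$ is what is revealed between $T_2$ and $T_3$, and $|C'''|=o_p(n)$ because $T_3\xrightarrow{p}\tau$ (Lemma \ref{lem10}); a contradiction argument over the uniform choice of $z_n$ (Lemma \ref{lem11}) then shows that only $o_p(n)$ vertices outside $C'\cup C''$ can have a big influenced set differing from $C^*$ by more than $\epsilon n$. To complete your route instead, you would have to prove residual subcriticality uniformly over starting vertices, which is substantially harder than the paper's exploration-based argument.
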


Informally, the above theorem says that asymptotically ($n\to \infty$) and under assumptions of Theorem \ref{inf1out}, there 
is essentially one and only one big (i.e., of size $O(n)$) graph component 
that can possibly be influenced starting with propagation from a given vertex in the graph. What this theorem
doesn't tell, however, is the relative size of the set of vertices which are indeed able to reach this big component (we call them
\textit{pioneers}) to the set of vertices which are able to reach only a component of size $o(n)$, and this is the question we turn to next. 

Our analysis technique to obtain the above results involves the simultaneous exploration of the Configuration Model and 
the propagation of influence. Another commonly used method to explore the components of Configuration Model is to make the 
branching process approximation in the 
initial stages of the exploration process. Although we won't explicitly follow this path in this paper, an heuristic analysis of 
the branching process approximation of our propagation model provides some important insights about the size of the set of pioneers.

We will need the following fundamental result on branching processes (see, for example, \cite{Mass}).

\begin{fact}[Survival vs. Extiction]
\label{Galt}
 For the Galton-Watson branching process whose progeny distribution is given by a random variable $Z$, the extinction 
probability $p_{ext}$ is given by the smallest solution in $[0,1]$ of 
\begin{equation}
\label{extn}
 x=\mathbb{E}(x^Z).
\end{equation}
In particular, the following regimes can happen:
\begin{enumerate}[(i)]
 \item Subcritical regime: If $\mathbb{E}[Z]<1$, then $p_{ext}=1$. \label{Galt1}
 \item Critical regime: If $\mathbb{E}[Z]=1$ and $Z$ is not deterministic, then $p_{ext}=1$. \label{Galt2}
 \item Supercritical regime: If $\mathbb{E}[Z]>1$, then $p_{ext}<1$.  \label{Galt3}
\end{enumerate}

\end{fact}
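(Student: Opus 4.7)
Fact 2.4 is the classical dichotomy for a Galton--Watson branching process, so my plan is simply to reproduce the standard first-generation-decomposition argument combined with the convexity of the progeny generating function.

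First I would set up notation: let $Z_0=1$ and let $Z_n$ denote the size of the $n$-th generation, and write $f(x):=\mathbb{E}[x^Z]$ for the progeny generating function. The starting observation is that conditioning on the first generation gives
\begin{equation*}
\mathbb{E}[x^{Z_{n+1}}]=f\bigl(\mathbb{E}[x^{Z_n}]\bigr),
\end{equation*}
because, given $Z_1=k$, the subtrees rooted at the $k$ children are i.i.d.\ copies of the whole process. Setting $q_n:=\mathbb{P}(Z_n=0)=\mathbb{E}[0^{Z_n}]$, this specialises to the recursion $q_{n+1}=f(q_n)$ with $q_0=0$. Since $\{Z_n=0\}\subseteq\{Z_{n+1}=0\}$, the sequence $q_n$ is nondecreasing and bounded by $1$, so it converges to the extinction probability $p_{ext}=\mathbb{P}(\bigcup_n\{Z_n=0\})$. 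Passing to the limit in the recursion and using continuity of $f$ on $[0,1]$ yields $p_{ext}=f(p_{ext})$, which is exactly equation~(\ref{extn}).

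Next I would show $p_{ext}$ is the \emph{smallest} fixed point in $[0,1]$. If $q\in[0,1]$ satisfies $q=f(q)$, then because $f$ is nondecreasing on $[0,1]$ we have $q=f(q)\geq f(0)=q_1$, and inductively $q\geq q_n$ for every $n$, whence $q\geq p_{ext}$. This gives the first assertion of the fact.

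Finally, the trichotomy follows from the shape of $f$ on $[0,1]$. The function $f$ is nondecreasing, convex, and satisfies $f(1)=1$ and $f'(1^-)=\mathbb{E}[Z]$ (with the standard one-sided convention if this is infinite). If $\mathbb{E}[Z]<1$, then $f'(x)\leq\mathbb{E}[Z]<1$ on $[0,1]$, so $x\mapsto f(x)-x$ is strictly decreasing and vanishes only at $x=1$, forcing $p_{ext}=1$. If $\mathbb{E}[Z]=1$ and $Z$ is not deterministic (so $f$ is \emph{strictly} convex), then $f(x)-x$ is strictly convex with value $0$ and derivative $0$ at $x=1$, which implies $f(x)>x$ for every $x\in[0,1)$, so again $p_{ext}=1$. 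If $\mathbb{E}[Z]>1$, then $f'(1)>1$ gives $f(x)<x$ immediately to the left of $1$, while $f(0)\geq 0$; continuity forces a fixed point in $[0,1)$, and by the previous step $p_{ext}$ is such a point, hence $p_{ext}<1$. The only subtle point—the main thing one must be careful with—is the critical case, where strict convexity (equivalently, non-degeneracy of $Z$) is essential to rule out a fixed point other than $1$; without it, $Z\equiv 1$ would give $p_{ext}=0$, which is precisely why this case is excluded from the statement.
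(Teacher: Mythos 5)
The paper does not prove this statement at all: it is labelled a \emph{Fact} and deferred to the reference \cite{Mass}, so there is no in-paper proof to compare against. Your argument --- the first-generation decomposition giving $q_{n+1}=f(q_n)$, monotone convergence to the smallest fixed point, and the convexity analysis of $f(x)-x$ for the three regimes --- is precisely the standard textbook proof that the citation points to, and it is correct, including the essential use of strict convexity (non-degeneracy of $Z$) in the critical case.
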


Now coming to the approximation, if we start the exploration with a uniformly chosen vertex $i$, then the number of 
its neighbours that it does not influence and those that it does, denoted by the random vector $(D_i^{(r)},D_i^{(t)})$, 
will have a joint distribution $(p_{v,w})$. But since the probability of getting influenced is proportional to the degree, 
the number of neighbours of a first-generation vertex excluding its parent (the vertex which influenced it) won't follow this 
joint distribution. Their joint distribution 
as well the joint distribution in the subsequent generations, denoted by $(\widetilde{D}^{(r)},\widetilde{D}^{(t)})$, 
is given by
\begin{equation}
{\widetilde{p}}_{v,w}=\frac{\left(v+1\right){p}_{v+1,w}+\left(w+1\right){p}_{v,w+1}}{\lambda }.
\end{equation}

Note that Condition \ref{degden}(\ref{a4}) implies that $\mathbb{P}(\widetilde{D}^{(t)}=0)>0$, and therefore, from Fact \ref{Galt}, 
this branching process gets extinct a.s. unless, 

\begin{align*}
&\mathbb{E}\left[{\widetilde{D}}^{\left(t\right)}\right]>1; \\
\text{equivalently,}\quad &\sum _{v,w}w{\widetilde{p}}_{v,w}>1, \\
 &\sum _{v,w}\frac{w\left(v+1\right){p}_{v+1,w}+w\left(w+1\right){p}_{v,w+1}}{\lambda }>1, \\
&\mathbb{E}\left[{D}^{\left(r\right)}{D}^{\left(t\right)}\right]+
\mathbb{E}\left[{D}^{\left(t\right)}\left({D}^{\left(t\right)}-1\right)\right]>\mathbb{E}\left[D\right], \\
&\mathbb{E}\left[D{D}^{\left(t\right)}\right]> \mathbb{E}\left[D+{D}^{\left(t\right)}\right].
\end{align*}

This condition for non-extinction of branching process remarkably agrees with the condition in Theorem \ref{inf1out}
which determines the possibility of influencing a non-negligible proportion of population.

Further from Fact \ref{Galt}, if this condition is satisfied, the extinction probability of the branching process which 
diverges from the first-generation vertex, $\widetilde{p}_{ext}$, is given by the smallest $x\in (0,1)$ which satisfies
\begin{align}
\label{bra}
&\mathbb{E}\left[{x}^{\widetilde{D}^{\left(t\right)}}\right]=x; \nonumber \\
\text{equivalently,}\quad &\sum _{v,w}\frac{{x}^{w}\left(v+1\right){p}_{v+1,w}+\left(w+1\right){x}^{w}{p}_{v,w+1}}{\lambda }=x, \nonumber \\
&\mathbb{E}\left[{D}^{\left(r\right)}{x}^{{D}^{\left(t\right)}}\right]
+\mathbb{E}\left[{D}^{\left(t\right)}{x}^{{D}^{\left(t\right)}-1}\right]
=x \mathbb{E}\left[D\right], \nonumber \\
&\mathbb{E}\left[D\right]{x}^{2}-\mathbb{E}\left[{D}^{\left(t\right)}{x}^{{D}^{\left(t\right)}}\right]
-x\mathbb{E}\left[{D}^{\left(r\right)}{x}^{{D}^{\left(t\right)}}\right]=0.
\end{align}

Note that $0$ is excluded as a solution since $\mathbb{P}(\widetilde{D}^{(t)}=0)>0$. 

Finally, the extinction probability of the branching process starting from the root, ${p}_{ext}$, is given by
\begin{equation}
{p}_{ext}=\mathbb{E}\left[({\widetilde{p}_{ext}})^{{D}^{\left(t\right)}}\right].
\end{equation}

Since the root is uniformly chosen, we would expect the proportion of the vertices which can influence a non-negligible proportion
 to be roughly $1-{p}_{ext}=1-\mathbb{E}\left[({\widetilde{p}_{ext}})^{{D}^{\left(t\right)}}\right]$.
Indeed, we confirm this result using a more rigorous analysis involving the introduction and study of a reverse influence 
propagation which essentially traces all the possible sources of influence of a given vertex. This method of introducing a reverse 
process (in a way, dual to the original process)to derive results for the original process has not been seen in a related context 
in the existing literature to the best of our knowledge, although the analysis of this dual process uses the familiar tools used for 
the original process.

Let $\overline{g}(x):=\mathbb{E}[x^{D^{(t)}}]$, $\overline{h}(x):=\mathbb{E}[D^{(t)}x^{D^{(t)}}]+x\mathbb{E}[D^{(r)}x^{D^{(t)}}]$ and
\begin{equation}
\label{Hx2}
 \overline{H}(x):=\mathbb{E}[D] x^2-\overline{h}(x)=\lambda x^2-\overline{h}(x).
\end{equation}

Let $\overline{C}(y)$ be the set of vertices of $G(n,(d_i)_1^n)$ starting from which $y$ can be influenced, i.e., 
$\overline{C}(y):=\left\{x \in v(G(n,(d_i)_1^n)) : x\to y\right\}$.
We have the following theorems for the dual backward propagation process.

\begin{thm}
\label{inf1bar}
Under assumptions of Theorem \ref{inf1out}, 
there is a unique $\overline{\xi} \in (0,1)$ such that $\overline{H}(\overline{\xi})=0$ and there exists at least one 
$y_n$ in $G^*(n,(d_i)_1^n)$ such that 
\begin{equation}
 \frac{\left|\overline{C}(y_n)\right|}{n} \xrightarrow{p} 1-\overline{g}(\overline{\xi}) > 0.
\end{equation} 
\end{thm}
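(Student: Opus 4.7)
The plan is to adapt the fluid-limit analysis of \cite{JanLuc} (as used in the proof of Theorem~\ref{inf1out}) to the \emph{reverse} exploration, whose dynamics are as follows. Starting from a candidate vertex $y_n$, I would declare $y_n$ \emph{discovered} and let all its $d_{y_n}$ half-edges be \emph{living}. At each step a living half-edge is selected and matched to a uniformly chosen other free half-edge $h'$ belonging to some vertex $z$: if $z$ is still sleeping and $h'$ is a \emph{transmitter} half-edge of $z$, then $z$ enters $\overline C(y_n)$ and all its remaining $d_z-1$ half-edges (of \emph{both} types) become living; otherwise $h'$ is simply consumed and $z$'s state is unchanged. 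In contrast with the forward exploration -- where only transmitter half-edges of a newly discovered vertex are activated -- here every half-edge of a newly discovered vertex becomes living, and the asymmetric treatment of sleeping vertices' transmitter versus receiver half-edges is precisely what replaces $h$ (and hence $H$) by $\overline h$ (and $\overline H$).

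Next I would parametrize the exploration by $x\in(0,1]$ decreasing from $1$, morally the fraction of transmitter half-edges of still-sleeping vertices that have not yet been paired with a living one. A sleeping vertex of type $(k,l)$ remains sleeping at ``time'' $x$ iff none of its $l$ transmitter half-edges has been matched to a living one, while its $k$ receiver half-edges may be consumed freely without changing its status. Re-deriving the Janson-Luczak martingale/ODE estimates with these transition probabilities should then give that this event has asymptotic probability $x^l$ with the vertex types behaving asymptotically independently, whence
\begin{equation*}
  \frac{|\{\text{sleeping vertices at time }x\}|}{n} \xrightarrow{p} \sum_{k,l} p_{k,l}\, x^l = \overline g(x),
\end{equation*}
and hence $|\overline C(y_n)|/n \to 1-\overline g(x)$. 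A parallel half-edge count -- total half-edges released by discovered vertices minus pairings already performed -- expresses the normalized number of living half-edges at time $x$ as a positive multiple of $\overline H(x)=\lambda x^2-\overline h(x)$, and the exploration halts exactly when the living pool vanishes: at the first $\overline\xi<1$ with $\overline H(\overline\xi)=0$.

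Uniqueness and interior location of $\overline\xi$ will follow from recognising that $\overline H(x)=0$ in $(0,1)$ is precisely the fixed-point equation~(\ref{bra}) for the extinction probability $\widetilde p_{ext}$ of the branching process with progeny $\widetilde D^{(t)}$. Under the hypothesis $\mathbb E[D^{(t)}D]>\mathbb E[D+D^{(t)}]$ of Theorem~\ref{inf1out}, a direct computation gives
\begin{equation*}
  \overline H'(1) = 2\lambda - \mathbb E[D^{(t)} D] - \lambda_r < 0,
\end{equation*}
so $\overline H>0$ just below $1$; combined with $\mathbb E[\widetilde D^{(t)}]>1$ and Fact~\ref{Galt} this forces uniqueness of a root in $(0,1)$, and I set $\overline\xi:=\widetilde p_{ext}$. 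The previous step then yields $|\overline C(y_n)|/n \xrightarrow{p} 1-\overline g(\overline\xi)=1-p_{ext}>0$, matching the branching-process heuristic. Existence of an actual $y_n$ realising this limit can be secured by choosing a starting vertex of sufficient total degree, or by invoking the heuristic that a positive fraction of starting vertices avoid extinction of the reverse branching process.

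The hard part will be the fluid-limit step: concentrating the stochastic reverse exploration around the deterministic $x^l$ survival profile, given the asymmetric fate of transmitter and receiver half-edges at sleeping vertices. I expect the required bounds to come from a careful re-derivation of the Janson-Luczak martingale and ODE estimates, but the modified transition probabilities of Step~1 do change the accounting nontrivially -- in particular the ``free consumption'' of receiver half-edges of sleeping vertices has no analogue in the classical setting -- and this is where the bulk of the technical work will lie.
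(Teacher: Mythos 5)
Your overall strategy --- rerunning the Janson--Luczak fluid-limit exploration in reverse, with the asymmetric fate of transmitter and receiver half-edges at sleeping vertices replacing $h,H$ by $\overline h,\overline H$, the survival probability $x^{l}$ for a type-$(k,l)$ sleeping vertex producing $\overline g$, and the active pool tracking $n\overline H(e^{-t})$ --- is exactly the paper's (Section~\ref{s.Reverse}, Lemmas~\ref{lem1b}--\ref{lem5b}); your computation of $\overline H'(1)$ also agrees with Lemma~\ref{lem5b}. The concentration estimates you defer are carried out in the paper by the same Glivenko--Cantelli and $d_{max}=O(n^{1/2})$ bookkeeping as in the forward case, so that part of the plan is sound, if unexecuted.

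There are, however, two places where what you propose would not go through as stated. First, the existence of $y_n$: picking ``a starting vertex of sufficient total degree'' gives you nothing (such a vertex may still have all its incident edges matched to receiver half-edges of its neighbours, and under Condition~\ref{degden} there need not even be vertices of diverging degree), while ``invoking the heuristic'' that a positive fraction of starting points avoid extinction is precisely what is \emph{not} available at this stage --- making that rigorous is essentially the content of Theorem~\ref{dut} and Corollary~\ref{du}, proved much later and under an extra hypothesis. The paper instead extracts $y_n$ from the exploration itself: it shows the restart step $\overline{\text{C}}$1 is whp not performed on $[\epsilon,\overline\tau-\epsilon]$ but is performed in $[\overline\tau,\overline\tau+\epsilon]$, defines $\overline T_1$ as the last restart before $\overline\tau/2$, takes $y_n$ to be the vertex activated there, and sandwiches $\overline C''\subset\overline C(y_n)\subset\overline C'\cup\overline C''$ with $|\overline C'|=o_p(n)$ and $|\overline C''|/n\xrightarrow{p}1-\overline g(\overline\xi)$. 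Some version of this restart argument is unavoidable; it cannot be replaced by a choice of starting vertex made in advance. Second, uniqueness of $\overline\xi$: Fact~\ref{Galt} only identifies $\widetilde p_{ext}$ as the \emph{smallest} root of the fixed-point equation and says nothing about there being a single root in $(0,1)$. The missing ingredient is convexity of $x\mapsto\mathbb E[x^{\widetilde D^{(t)}}]$ --- equivalently the paper's strict concavity of $\overline H(x)/x$ --- together with Condition~\ref{degden}(iv) to dispose of the degenerate case where that function is affine and to get $\overline H<0$ near $0$, so that the root is interior. This is easily repaired, but as written your uniqueness claim does not follow from the facts you cite.
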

Remark that $\overline{H}(x)=0$ is the same as equation (\ref{bra}) and therefore 
$\overline{\xi} \equiv \widetilde{p}_{ext}$ and $1-\overline{g}(\overline{\xi})\equiv p_{ext}$ 
from the branching process approximation.

We denote $\overline{C}(y_n)$ constructed in the proof of Theorem \ref{inf1bar} by $\overline{C}^*$. 
For every $\epsilon>0$, let 
\begin{equation*}
 \overline{\mathbb{C}}^s(\epsilon):=\left\{y \in v(G(n,(d_i)_1^n)): \left|\overline{C}(y)\right|/n < \epsilon\right\},
\end{equation*}
and 
\begin{equation*}
 \overline{\mathbb{C}}^L(\epsilon):=\left\{y \in v(G(n,(d_i)_1^n)): \left|\overline{C}(y)\vartriangle \overline{C}^* \right|/n 
< \epsilon\right\}.
\end{equation*}

\begin{thm}
\label{inf2bar}
Under assumptions of Theorem \ref{inf1out},
\begin{equation}
 \forall \epsilon, \quad  \frac{\left|\overline{\mathbb{C}}^s(\epsilon)\right|+\left|\overline{\mathbb{C}}^L(\epsilon)\right|}{n} 
 \xrightarrow{p} 1.
\end{equation}
\end{thm}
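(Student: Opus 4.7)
The plan is to mirror the strategy of the forward analog Theorem~\ref{inf2out}, replacing the simultaneous graph-and-forward-propagation exploration with the reverse exploration dynamic developed in the proof of Theorem~\ref{inf1bar}. Writing
\[
\overline{\mathbb{C}}^m(\epsilon):=v(G(n,(d_i)_1^n))\setminus\bigl(\overline{\mathbb{C}}^s(\epsilon)\cup\overline{\mathbb{C}}^L(\epsilon)\bigr),
\]
the statement reduces to showing $|\overline{\mathbb{C}}^m(\epsilon)|/n\xrightarrow{p}0$ for every $\epsilon>0$; explicitly, $y\in\overline{\mathbb{C}}^m(\epsilon)$ means $|\overline{C}(y)|\ge\epsilon n$ and $|\overline{C}(y)\vartriangle\overline{C}^*|\ge\epsilon n$.

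The core argument is a joint reverse exploration that first runs from the pivot $y_n$ of Theorem~\ref{inf1bar} (producing $\overline{C}^*$) and then restarts from $y$ on the residual half-edge matching. Since the fluid-limit ODE governing the reverse exploration is the one whose only root in $(0,1)$ is the unique $\overline{\xi}$ with $\overline{H}(\overline{\xi})=0$, the same scaling-limit analysis as in Theorem~\ref{inf1bar} yields a dichotomy for the continuation from $y$: either its queue of active half-edges is exhausted within $o(n)$ steps, in which case $|\overline{C}(y)|=o(n)$ and $y\in\overline{\mathbb{C}}^s(\epsilon)$ for $n$ large, or it runs until the fluid-limit trajectory again hits zero, in which case the vertex set it exposes differs from $\overline{C}^*$ by at most $o(n)$ and $y\in\overline{\mathbb{C}}^L(\epsilon)$. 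To upgrade this pointwise dichotomy to the global bound $|\overline{\mathbb{C}}^m(\epsilon)|/n\xrightarrow{p}0$, I would follow the Janson--\L{}uczak template~\cite{JanLuc}: assume for contradiction $|\overline{\mathbb{C}}^m(\epsilon)|/n\ge\delta$ with positive limiting probability, extract a deterministic sequence $y^{(n)}\in\overline{\mathbb{C}}^m(\epsilon)$, and explore reverse components ``from outside'' in a smallest-first order on the residual graph, whose associated fluid-limit equation has no root in $(0,1)$ once $\overline{C}^*$ has been peeled off, contradicting the standing assumption.

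The step I expect to be the main obstacle is controlling the coupling when the continuation from $y$ first meets half-edges already exposed by the run from $y_n$. Because the reverse process is directed -- an edge can carry an ``acknowledgement'' from an arbitrary half-edge back to the matched receiver half-edge, but not conversely -- the branching statistics of the restart are asymmetric and must be recomputed in terms of $\overline{g}$, $\overline{h}$, $\overline{H}$ rather than $g$, $h$, $H$; moreover, one must rule out the creation of a large ``ghost'' component of $y$ essentially disjoint from $\overline{C}^*$, which is exactly the essential-uniqueness content of the theorem. Once this coupling lemma is established, the remainder reduces to the same fluid-limit convergence and stopping-time arguments already carried out for Theorems~\ref{inf1out}--\ref{inf2out}, applied to the reverse dynamic.
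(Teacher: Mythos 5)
Your proposal follows essentially the same route as the paper: the paper's proof of Theorem~\ref{inf2bar} is literally the proof of Theorem~\ref{inf2out} transplanted to the reverse exploration of Theorem~\ref{inf1bar}, via the bar-versions of Lemmas~\ref{lem8}--\ref{lem11} (Lemmas~\ref{lem8b}--\ref{lem11b}), stated without proof ``since the only change is notational.'' Two remarks on where your sketch deviates from, or is vaguer than, what the paper actually uses. First, the globalization step is not carried out by peeling off $\overline{C}^*$ and arguing subcriticality of the residual reverse process under a smallest-first exploration; it is carried out purely through the uniformity of the restart vertex awakened at time $\overline{T}_2$: if a $\delta$-fraction of unexplored vertices were ``bad,'' that vertex would be bad with probability bounded away from zero, contradicting the fluid-limit facts about the continuation (this is the bar-version of Lemma~\ref{lem11}, together with the bar-version of Lemma~\ref{lem8} for a uniformly chosen starting vertex). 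Second, the coupling obstacle you flag --- the continuation from $y$ meeting half-edges already exposed --- dissolves by a deterministic containment rather than a delicate coupling lemma: the algorithm restarted at $\overline{T}_2$ explores exactly $\overline{C}(w)\setminus(\overline{C}'\cup\overline{C}'')$ for the restart vertex $w$, so $\overline{C}(w)\subset\overline{C}'\cup\overline{C}''\cup\overline{C}'''$ with $|\overline{C}'''|/n\xrightarrow{p}0$ because $\overline{T}_3\xrightarrow{p}\overline{\tau}$ (Lemma~\ref{lem10b}); combined with the concentration of $|\overline{C}(w)|/n$ near $1-\overline{g}(\overline{\xi})$ this forces $|\overline{C}(w)\vartriangle\overline{C}^*|=o_p(n)$, which is exactly the exclusion of the ``ghost component'' you worry about. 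Your skeleton is the right one; these two concrete devices are what turn it into the paper's proof.
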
 

Informally, the above theorem says that asymptotically ($n\to \infty$) and under assumptions of Theorem \ref{inf1out}, there 
is essentially one and only one big \textit{source} component in the graph, to which a given vertex can possibly trace back while tracing 
all the possible sources of its influence.

Finally, we have the following theorem which establishes the duality relation between the two processes.

\begin{thm}
\label{dut}
Under assumptions of Theorem \ref{inf1out}, for any $\epsilon>0$ and $n\to \infty$,
\begin{equation}
n^{-1}|\overline{\mathbb{C}}^L(\epsilon)|\left| n^{-1} |\overline{C}^*|-n^{-1}|\mathbb{C}^L(\epsilon)|\right|
 \leq \alpha \epsilon + R_n(\epsilon), 
\end{equation}
where $\alpha>0$ and $R_n(\epsilon) \xrightarrow{p} 0$.
\end{thm}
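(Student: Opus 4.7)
The plan is to prove the theorem via a double-counting argument on the ordered pairs $(x,y)$ with $x\to y$, leveraging the essential-uniqueness Theorems~\ref{inf2out} and~\ref{inf2bar}. Abbreviate $A=|\overline{C}^*|$, $B=|\mathbb{C}^L(\epsilon)|$, $C=|\overline{\mathbb{C}}^L(\epsilon)|$, $D=|C^*|$, $P=|C^*\cap\overline{\mathbb{C}}^L(\epsilon)|$, $Q=|\overline{C}^*\cap\mathbb{C}^L(\epsilon)|$.

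First, I count $N:=|\{(x,y):x\to y\}|$ in two ways: $N=\sum_x|C(x)|=\sum_y|\overline{C}(y)|$. In each sum I split the indexing vertex by whether it belongs to the appropriate $\mathbb{C}^L$, $\mathbb{C}^s$, or the exceptional remainder (of total size $o_p(n)$ by the essential-uniqueness theorems), and apply the defining bounds $||C(x)|-|C^*||<\epsilon n$ on $\mathbb{C}^L$ and $|C(x)|<\epsilon n$ on $\mathbb{C}^s$ (and the reverse analogues). Equating yields the product identity
\[
BD=CA+O(\epsilon n^2)+o_p(n^2).\qquad(\ast)
\]

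Second, I refine the count to the sub-collection $S':=\{(x,y):x\in\overline{C}^*,\ y\in\overline{\mathbb{C}}^L,\ x\to y\}$. Summing over $y$ and using $|\overline{C}(y)\vartriangle\overline{C}^*|<\epsilon n$ gives $|S'|=CA+O(\epsilon n^2)$; summing over $x$, stratified by its forward-side category, gives $|S'|=QP+O(\epsilon n^2)+o_p(n^2)$. Hence
\[
QP=CA+O(\epsilon n^2)+o_p(n^2).\qquad(\ast\ast)
\]
The decomposition $CA-QP=C(A-Q)+Q(C-P)$ is a sum of two non-negative terms (using $Q\leq A$ and $P\leq C$), so each is itself $O(\epsilon n^2)+o_p(n^2)$. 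Provided $C\geq c\,n$ whp for some $c>0$, this gives $|\overline{C}^*\setminus\mathbb{C}^L|=A-Q=O(\epsilon n)+o_p(n)$, and bootstrapping $Q=\Theta(n)$ then yields $|\overline{\mathbb{C}}^L\setminus C^*|=C-P=O(\epsilon n)+o_p(n)$.

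For the reverse inclusion, subtracting $(\ast\ast)$ from $(\ast)$ gives $BD-QP=B(D-P)+P(B-Q)=O(\epsilon n^2)+o_p(n^2)$, again a sum of non-negative terms. Since $P=\Theta(n)$ by the previous step, this forces $|\mathbb{C}^L\setminus\overline{C}^*|=B-Q=O(\epsilon n)+o_p(n)$. Collecting, $|\overline{C}^*\vartriangle\mathbb{C}^L|=O(\epsilon n)+o_p(n)$, and the elementary $\bigl||\overline{C}^*|-|\mathbb{C}^L|\bigr|\leq|\overline{C}^*\vartriangle\mathbb{C}^L|$ then implies the theorem after multiplying by $C/n^2$, with $\alpha$ absorbing implicit constants and $R_n(\epsilon)=o_p(1)$. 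The main obstacle is the seed bound $C\geq c\,n$ whp needed to start the bootstraps: the theorem is trivial if $C=o(n)$ (left side $o(1)$, absorbable into $R_n$), so the interesting regime is $C=\Theta(n)$, and I expect an explicit fluid-limit identification of $C/n$ as a positive constant to come out of the exploration analysis underpinning Theorem~\ref{inf2bar}.
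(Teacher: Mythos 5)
Your overall strategy --- double counting the pairs $\{(x,y):x\to y\}$ and using Theorems~\ref{inf2out} and~\ref{inf2bar} to replace the sums by products of set sizes --- is exactly the paper's, and your relations $(\ast)$ and $(\ast\ast)$ are correct as stated. The gap is the seed bound $C=|\overline{\mathbb{C}}^L(\epsilon)|\geq cn$ whp, together with the lower bounds on $Q=|\overline{C}^*\cap\mathbb{C}^L(\epsilon)|$ and $P=|C^*\cap\overline{\mathbb{C}}^L(\epsilon)|$ that you bootstrap from it. No such bound is available: Corollary~\ref{du} has to \emph{assume} a linear lower bound on $n^{-1}|\mathbb{C}^L(\epsilon)|$ precisely because it cannot be derived from Condition~\ref{degden} alone, the ``Future Work'' paragraph flags its removal as open, and the situation for $|\overline{\mathbb{C}}^L(\epsilon)|$ is symmetric. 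Your fallback dichotomy is not exhaustive ($n^{-1}|\overline{\mathbb{C}}^L(\epsilon)|$ need not either tend to $0$ in probability or be bounded below by a constant whp), and conditioning on $\{C\geq\delta n\}$ does not repair it: extracting $A-Q=O(\epsilon n)$ from $C(A-Q)=O(\epsilon n^2)$ costs a factor $1/\delta$ on the $\epsilon$ term, so the two regimes cannot be glued for any choice of $\delta$. In effect you are proving the stronger statement $n^{-1}|\overline{C}^*\vartriangle\mathbb{C}^L(\epsilon)|=O(\epsilon)+o_p(1)$, which is Corollary~\ref{du} and genuinely needs the extra hypothesis; the theorem deliberately keeps the prefactor $n^{-1}|\overline{\mathbb{C}}^L(\epsilon)|$ so as to avoid it.

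The fix is to pivot every division on $|\overline{C}^*|$ and $|C^*|$, the only quantities with proven linear lower bounds ($n^{-1}|\overline{C}^*|\xrightarrow{p}1-\overline{g}(\overline{\xi})>0$ and $n^{-1}|C^*|\xrightarrow{p}1-g(\xi,\xi)>0$ by Theorems~\ref{inf1bar} and~\ref{inf1out}). Your $(\ast\ast)$ restricts the refined count to $x\in\overline{C}^*$, $y\in\overline{\mathbb{C}}^L(\epsilon)$ and therefore delivers $Q(C-P)=O(\epsilon n^2)+o_p(n^2)$, with the useless multiplier $Q$ on the difference $C-P$. The paper instead restricts to $x\in\overline{C}^*$, $y\in C^*$ (Proposition~\ref{dut1}) and evaluates that count by summing over $y$, which yields both $N\approx|\overline{C}^*|\cdot|\overline{\mathbb{C}}^L(\epsilon)|$ and $N\approx|\overline{C}^*|\cdot|\overline{\mathbb{C}}^L(\epsilon)\cap C^*|$; the common factor $|\overline{C}^*|$ can be divided out, giving $n^{-1}\bigl||\overline{\mathbb{C}}^L(\epsilon)|-|\overline{\mathbb{C}}^L(\epsilon)\cap C^*|\bigr|\leq\alpha^2\epsilon+o_p(1)$ unconditionally. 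Combining this with $N\approx|C^*\cap\overline{\mathbb{C}}^L(\epsilon)|\cdot|\mathbb{C}^L(\epsilon)|$ (Proposition~\ref{dut5}) then produces $|\overline{\mathbb{C}}^L(\epsilon)|\cdot\bigl||\overline{C}^*|-|\mathbb{C}^L(\epsilon)|\bigr|=O(\epsilon n^2)+o_p(n^2)$ directly, with no division by $|\overline{\mathbb{C}}^L(\epsilon)|$, $Q$ or $P$ ever required.
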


The theorem leads to the following fundamental result of this paper, where it all comes together and we are able to 
essentially identify, under one additional assumption apart from those in Theorem \ref{inf1out}, the set of pioneers with the one 
big source component that we discovered above. In particular, this gives us the relative size (w.r.t. $n$) of the set of pioneers 
since we know the relative size of the source component.

\begin{cor}
\label{du}
Under assumptions of Theorem \ref{inf1out}, for any $\epsilon>0$ and $n\to \infty$, if there exists $a>0$ such that 
$n^{-1}|\mathbb{C}^L(\epsilon)|>a$ whp, then
\begin{equation}
\label{fin}
 n^{-1} |\mathbb{C}^L(\epsilon) \vartriangle \overline{C}^*| \leq \alpha' \epsilon + R_n'(\epsilon), 
\end{equation}
where $\alpha'>0$ and $R_n'(\epsilon) \xrightarrow{p} 0$.
\end{cor}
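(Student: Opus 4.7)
The plan is to upgrade the scalar inequality of Theorem \ref{dut} into the symmetric-difference bound of the corollary, combining it with the hypothesis $n^{-1}|\mathbb{C}^L(\epsilon)| > a$ whp and a short transitivity argument that establishes an approximate set inclusion.

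First I would argue that $n^{-1}|\overline{\mathbb{C}}^L(\epsilon)| \geq \bar a$ whp for some $\bar a > 0$; this is where the extra hypothesis enters. The cleanest route is to apply the forward-backward swap of Theorem \ref{dut}: the reverse dynamic is itself an influence-propagation process on the same enhanced Configuration Model, so the duality analysis of Section \ref{s.Duality} delivers the symmetric companion inequality $n^{-1}|\mathbb{C}^L(\epsilon)| \cdot \bigl|n^{-1}|C^*| - n^{-1}|\overline{\mathbb{C}}^L(\epsilon)|\bigr| \leq \alpha\,\epsilon + R_n(\epsilon)$. Dividing by $n^{-1}|\mathbb{C}^L(\epsilon)| > a$ and using $n^{-1}|C^*| \to 1-g(\xi,\xi) > 0$ from Theorem \ref{inf1out}, one obtains $n^{-1}|\overline{\mathbb{C}}^L(\epsilon)| \to 1-g(\xi,\xi)$ in probability, whence the claimed lower bound for all sufficiently small $\epsilon$.

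Dividing Theorem \ref{dut} by $\bar a$ then yields
\[
\bigl|\,n^{-1}|\overline{C}^*| - n^{-1}|\mathbb{C}^L(\epsilon)|\,\bigr| \;\leq\; \frac{\alpha}{\bar a}\,\epsilon + \frac{1}{\bar a}\,R_n(\epsilon),
\]
so $\mathbb{C}^L(\epsilon)$ and $\overline{C}^*$ have nearly the same cardinality. To upgrade this to a symmetric-difference bound, I would establish the approximate inclusion $|\mathbb{C}^L(\epsilon) \setminus \overline{C}^*| \leq \epsilon n$. For $x \in \mathbb{C}^L(\epsilon)$ the definition $|C(x) \vartriangle C^*| < \epsilon n$ gives $C(x) \supseteq C^* \setminus E_x$ with $|E_x| < \epsilon n$; provided $|C^* \cap \overline{C}^*| > \epsilon n$, one can pick $z \in (C^* \cap \overline{C}^*) \setminus E_x$, so that $x \to z$ (from $z \in C(x)$) and $z \to y_n$ (from $z \in \overline{C}(y_n) = \overline{C}^*$), whence $x \in \overline{C}^*$ by transitivity. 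Combined with the cardinality near-equality via $|A \vartriangle B| \leq 2|A \setminus B| + \bigl||A|-|B|\bigr|$, this produces $n^{-1}|\mathbb{C}^L(\epsilon) \vartriangle \overline{C}^*| \leq \alpha'\epsilon + R_n'(\epsilon)$ with $\alpha' = 2 + \alpha/\bar a$ and $R_n'(\epsilon) = R_n(\epsilon)/\bar a$.

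The main obstacle is the overlap claim $|C^* \cap \overline{C}^*| > \epsilon n$ whp. Morally, both big components occupy positive-density subsets of the graph and the coupled fluid-limit exploration underlying Theorem \ref{dut} should produce a linear-sized intersection as a byproduct; one concrete route is to observe that under the hypothesis $n^{-1}|\mathbb{C}^L(\epsilon)| > a$, the near-equality of cardinalities already established forces all but $O(\epsilon n)$ of the forward pioneers to lie in $\overline{C}^*$, and then the hypothesis itself supplies the required $\Omega(n)$ overlap. Avoiding the slight circularity between this overlap claim and the approximate inclusion argument is the most delicate technical point of the proof.
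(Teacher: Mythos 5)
Your opening two steps are sound and track the paper's intent: the hypothesis $n^{-1}|\mathbb{C}^L(\epsilon)|>a$ is indeed used to lower-bound $n^{-1}|\overline{\mathbb{C}}^L(\epsilon)|$ through the duality identities (the mirror image of Theorem \ref{dut} is available because every estimate in Section \ref{s.Duality} has a forward/backward companion), and feeding that lower bound back into Theorem \ref{dut} controls $\bigl|\,n^{-1}|\overline{C}^*|-n^{-1}|\mathbb{C}^L(\epsilon)|\,\bigr|$. The gap is in your third step. To pass from near-equal cardinalities to a small symmetric difference you still need the approximate inclusion $|\mathbb{C}^L(\epsilon)\setminus\overline{C}^*|=O(\epsilon n)$, and your transitivity argument for it hinges on the unproven claim that $|C^*\cap\overline{C}^*|>\epsilon n$ whp. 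This is a statement about the ``strongly connected core'' (vertices that both lie in the big influenced component and can reach $y_n$), and nothing in the paper establishes it. Your proposed derivation of it --- that ``the near-equality of cardinalities already established forces all but $O(\epsilon n)$ of the forward pioneers to lie in $\overline{C}^*$'' --- is invalid: two sets of nearly equal size can be disjoint, so cardinality comparison alone never yields overlap. Moreover, even if that implication held, it concerns $\mathbb{C}^L(\epsilon)\cap\overline{C}^*$, which is exactly the quantity you are trying to bound, so the argument is circular, as you yourself suspect.

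The paper closes this gap differently: the approximate inclusion is precisely the first inequality of Proposition \ref{dut4}, namely $\bigl|\,n^{-1}|\mathbb{C}^L(\epsilon)|-n^{-1}|\mathbb{C}^L(\epsilon)\cap\overline{C}^*|\,\bigr|\leq\alpha^1\epsilon+R_n^3(\epsilon)$, which is proved by double-counting the sum $\sum_{x,y}\mathbf{1}(y\in C(x))$ in two ways (via Proposition \ref{dut1} and the decomposition over $\mathbb{C}^s,\mathbb{C}^L$ and their complement), with no appeal to a linear-sized $C^*\cap\overline{C}^*$. Writing $|\mathbb{C}^L(\epsilon)\vartriangle\overline{C}^*|=2\bigl(|\mathbb{C}^L(\epsilon)|-|\mathbb{C}^L(\epsilon)\cap\overline{C}^*|\bigr)+\bigl(|\overline{C}^*|-|\mathbb{C}^L(\epsilon)|\bigr)$ and combining Proposition \ref{dut4} with Theorem \ref{dut} then gives (\ref{fin}). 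If you replace your transitivity step by an invocation (or reproof) of Proposition \ref{dut4}, the rest of your argument goes through; as it stands, the overlap claim is a genuine missing ingredient.
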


\begin{rem}
In particular, if $\mathbb{E}[D^{(t)}(D^{(t)}-2)]>0$, then the Configuration Model with the degree sequence $(d_i^{(t)})_1^n$ will have 
a giant component $C^{(t)}$ whp. In this case, whp $n^{-1}|\mathbb{C}^L(\epsilon)| \geq n^{-1}|C^{(t)}| > a$ for some $a>0$, and thus the 
condition in 
the above corollary is satisfied.
\end{rem}

\paragraph{Future Work}

There is a strong indication that in Corollary \ref{du}, we do not need the lower bound on $n^{-1}|\mathbb{C}^L(\epsilon)|$ for (\ref{fin}) to 
hold. One possible approach to prove this would be to make rigorous the branching process approximation heuristically illustrated in the previous 
section to provide insight (see \cite{Britt}, where the branching process approximation is used to find the largest component of 
Erd\"{o}s-R\'{e}nyi graph). This approach could give not only the required lower bound on $n^{-1}|\mathbb{C}^L(\epsilon)|$ in Corollary \ref{du}, but 
even the desired approximation of $n^{-1}|\mathbb{C}^L(\epsilon)|$ which we otherwise obtain by the identification of $\mathbb{C}^L(\epsilon)$ with 
$\overline{C}^*$ in Corollary \ref{du}. But even in that case, the introduction of the dual process which leads to the identification of 
$\mathbb{C}^L(\epsilon)$ with $\overline{C}^*$ is useful since this would provide us with important additional information regarding the 
structure of $\mathbb{C}^L(\epsilon)$, which  we have not explored 
in this paper. 

We also believe that the sufficient condition on the total and the transmitter degree distribution ($\mathbb{E}[D^{(t)}D] > \mathbb{E}[D^{(t)}+D]$) 
in Theorem \ref{inf1out} for influence 
propagation to go viral, is necessary as well. 

\section{Analysis of the Original Forward-Propagation Process}
 \label{s.Forward}
 The following analysis is similar to the one presented in \cite{JanLuc} and wherever the proofs of analogous lemmas, theorems etc. 
 don't have any 
 new point of note, we refer the reader to \cite{JanLuc} without giving the proofs. 
 
 Throughout the construction and propagation process, we keep track of what we call \textit{active transmitter} half-edges. To begin 
 with, 
 all 
 the vertices and the attached half-edges are \textit{sleeping} but once influenced, a vertex and its half-edges become \textit{active}. 
 Both 
 sleeping and active half-edges at any time constitute what we call \textit{living} half-edges and when two half-edges are matched to 
 reveal an 
 edge along which the flow of influence has occurred, the half-edges are pronounced \textit{dead}. Half-edges are further classified 
 according to 
 their ability or inability to transmit information as \textit{transmitters} and \textit{receivers} respectively. 
 We initially give all the half-edges i.i.d. random maximal lifetimes with distribution 
 given by $\tau \sim \text{exp}(1)$, then go through the following algorithm.
 
 \begin{enumerate}[C1]
  \item If there is no active half-edge (as in the beginning), select a sleeping vertex and declare it active, along with all its 
  half-edges. 
 For definiteness, we choose the vertex uniformly at random among all sleeping vertices. If there is no sleeping 
 vertex left, the process stops. \label{c1}
  \item Pick an active transmitter half-edge and kill it. \label{c2}
  \item Wait until the next living half-edge dies (spontaneously, due to the expiration of its exponential life-time). This is joined to 
  the one killed in previous step to form an edge of the graph along 
 which information has been transmitted. If the vertex it belongs to is sleeping, we change its status to active, along with all of its 
 half-edges. 
 Repeat from the first step. \label{c3}
 \end{enumerate}
 
 Every time C\ref{c1} is performed, we choose a vertex and trace the flow of influence from here onwards. Just before C\ref{c1} is 
 performed again, 
 when the number of active transmitter half-edges goes to $0$, we've explored the extent of the graph component that the chosen vertex 
 can influence, that had not been previously influenced.
 
 Let $S_T(t)$, $S_R(t)$, $A_T(t)$ and $A_R(t)$ represent the number of sleeping transmitter, sleeping receiver, active transmitter and 
 active receiver half-edges, respectively, at time $t$. Therefore, $R(t):=A_R(t)+S_R(t)$ and 
 $L(t):=A_T(t)+A_R(t)+S_T(t)+S_R(t)=A_T(t)+S_T(t)+R(t)$ 
 denotes the number of receiver and living half-edges, respectively, at time $t$. 
 
 For definiteness, we will take them all to be right-continuous, which along with C\ref{c1} entails that $L(0)=2m-1$. Subsequently, 
 whenever 
 a living half-edge dies spontaneously, C\ref{c3} is performed, immediately followed by C\ref{c2}. As such, $L(t)$ is decreased by 
 2 every 
 time a living half-edge dies spontaneously, up until the last living one die and the process terminates.
 Also remark that all the receiver half-edges, both sleeping and active, continue to die spontaneously.
 
 The following consequences of \textit{Glivenko-Cantelli} theorem are analogous to those given in \cite{JanLuc} and we state them 
 without proof.
  
 \begin{lem}
 \label{lem1}
  As $n\to \infty$ ,
 \begin{equation}
  \sup_{t\geq 0}\left| n^{-1}L(t)-\lambda e^{-2t}\right| \xrightarrow{p} 0 .
 \end{equation}
 \end{lem}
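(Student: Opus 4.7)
The plan is to exploit the i.i.d.\ $\mathrm{Exp}(1)$ clocks $\tau_1,\ldots,\tau_{2m}$ attached to the half-edges and reduce the claim to a fluid-limit ODE. First, set $N(t):=\#\{i:\tau_i>t\}$. Since the $\tau_i$ are i.i.d.\ with continuous CDF $1-e^{-t}$, the uniform strong law of large numbers (Glivenko--Cantelli) combined with $2m/n\to\lambda$ from Condition~\ref{degden}(\ref{a2}) gives $\sup_{t\ge 0}|n^{-1}N(t)-\lambda e^{-t}|\xrightarrow{p}0$. This is the only pure randomness-input estimate that the subsequent argument uses.

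The heart of the matter is the \emph{doubling effect} inherent in steps C\ref{c2}--C\ref{c3}: each spontaneous death instantly triggers the killing of one additional active-transmitter half-edge. Writing $D_s(t)$ for the number of spontaneous deaths and $K(t)$ for the number of C\ref{c1} calls performed by time $t$, the bookkeeping yields $L(t)=2m-2D_s(t)-K(t)$. By memorylessness of the exponential clocks, between successive C\ref{c1} epochs $L$ is a pure-jump Markov process whose only jumps are $L\mapsto L-2$ at total rate $L(t)$, so the Doob--Meyer decomposition reads
\begin{equation*}
L(t)=L(0)-2\int_0^t L(s)\,ds - K(t) + M(t),
\end{equation*}
where $M$ is a local martingale whose predictable quadratic variation is bounded by $4\int_0^tL(s)\,ds\le 8mt$. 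Doob's $L^2$ maximal inequality then yields $\sup_{s\le T}|M(s)|=O_p(\sqrt n)$ on any compact $[0,T]$, and an auxiliary bound $\sup_{s\le T}K(s)=o_p(n)$ (using Condition~\ref{degden}(\ref{a3}) and the standard configuration-model arguments from \cite{JanLuc} to control the number of exploration phases begun by time~$T$) lets both error terms be absorbed. Substituting, the rescaled process satisfies $n^{-1}L(t)=n^{-1}L(0)-2\int_0^t n^{-1}L(s)\,ds+o_p(1)$ uniformly on $[0,T]$, so Gronwall's inequality identifies the limit as the unique solution $\ell(t)=\lambda e^{-2t}$ of $\ell'=-2\ell,\;\ell(0)=\lambda$, uniformly on compact intervals.

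To upgrade uniform convergence on compacts to uniform convergence over $[0,\infty)$, I would use that $t\mapsto n^{-1}L(t)$ is non-increasing with jumps of size $O(1/n)$, while the limit $\lambda e^{-2t}$ is continuous and decays to zero, and that $L(t)=0$ eventually once all half-edges have been matched. A Dini-type argument (monotone convergence to a continuous limit on a compact interval, with the tail controlled by $n^{-1}L(t)\le n^{-1}L(T)$ for $t\ge T$ and $T$ chosen so that $\lambda e^{-2T}<\varepsilon$) then promotes uniform convergence on compacts to uniform convergence on all of $[0,\infty)$. The step I expect to be the most delicate is the control of $K(t)$: in regimes where the algorithm is scraping through many small components rather than one big one, the martingale bound alone is not sharp, so the argument must borrow the phase-counting estimates from \cite{JanLuc}; the rest of the proof is essentially a textbook fluid-limit calculation on top of the Glivenko--Cantelli input.
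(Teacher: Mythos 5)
Your overall architecture (Doob--Meyer decomposition of a pure death process, Doob's maximal inequality, Gronwall on compacts, then monotonicity of $L$ plus decay of $\lambda e^{-2t}$ to pass from $[0,T]$ to $[0,\infty)$) is a legitimate alternative to the paper's route --- the paper states the lemma without proof as a consequence of Glivenko--Cantelli, following the death-process comparison of \cite{JanLuc} --- and your last two steps are fine. But your bookkeeping identity is wrong, and the auxiliary estimate you invoke to compensate is false. Step C1 does not kill any half-edge: it only changes the status of a sleeping vertex and its half-edges from sleeping to active. The only removals are the one half-edge killed by C2 at the start of each iteration and the one that dies spontaneously in C3 at the end of it, so the correct identity is $L(t)=2m-1-2D_s(t)$; the term $-K(t)$ should not appear. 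More seriously, the bound $\sup_{s\le T}K(s)=o_p(n)$ that you use to absorb it does not hold in general: the lemma is asserted under Condition~\ref{degden} alone (no supercriticality), C1 and C2 consume no time, and whenever the exploration sweeps through $\Theta(n)$ small components --- which happens by any fixed $T>0$ when, say, a positive fraction of vertices have degree $1$ and pair up into two-vertex components, a situation compatible with Condition~\ref{degden}(\ref{a4}) --- the number of C1 calls by time $T$ is $\Theta(n)$, not $o_p(n)$. As written, the proof therefore leans on a false lemma.

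The error is self-cancelling: once you drop the spurious $-K(t)$ (replacing it by the deterministic $-1$), no control on the number of exploration phases is needed at all, and the remainder of your argument --- $L$ jumps by $-2$ at rate $L(t)$, compensator $-2\int_0^tL(s)\,ds$, quadratic variation bounded by $4\int_0^tL(s)\,ds\le 8mt$, hence $\sup_{s\le T}|M(s)|=O_p(\sqrt n)$, Gronwall with $n^{-1}L(0)\to\lambda$, and the tail bound $n^{-1}L(t)\le n^{-1}L(T)$ for $t\ge T$ --- closes the proof. Note also that your opening Glivenko--Cantelli estimate for $N(t)$ is never used by the martingale argument (only $2m/n\to\lambda$ is); in the proof the paper has in mind it is the other way around: one compares $L$ directly with a death process built from the i.i.d.\ exponential clocks and concludes by Glivenko--Cantelli, with no martingale machinery.
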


 \begin{lem}
 \label{lem2}
 As $n\to \infty$ ,
 \begin{equation}
  \sup_{t\geq 0}\left| n^{-1}R(t)-\lambda_r e^{-t}\right| \xrightarrow{p} 0 .
 \end{equation} 
 \end{lem}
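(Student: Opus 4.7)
The plan is to reduce the statement to the classical Glivenko--Cantelli theorem applied to the i.i.d.\ $\text{exp}(1)$ lifetimes of the receiver half-edges. The key structural observation is that the decrement of $R(t)$ decouples from the graph exploration: step C\ref{c2} artificially kills only active \emph{transmitter} half-edges, while step C\ref{c3} matches the just-killed transmitter with whatever living half-edge is the next to expire by its own clock. A receiver half-edge therefore leaves $R(\cdot)$ precisely at the instant its own $\text{exp}(1)$ clock rings, irrespective of whether it is sleeping or active at that moment, and irrespective of whether it is thereby matched to a transmitter. This is exactly the content of the remark ``all the receiver half-edges, both sleeping and active, continue to die spontaneously'' in the algorithm description.

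With this in hand, label the receiver half-edges $j=1,\dots,N_n$ with $N_n := \sum_{i=1}^n d_i^{(r)}$, and let $(\tau_j)_{j=1}^{N_n}$ be their i.i.d.\ $\text{exp}(1)$ lifetimes. Setting $M_n(t) := \sum_{j=1}^{N_n} \mathbf{1}\{\tau_j > t\}$, the observation above gives $R(t) = M_n(t)$, and hence $R(t)/n = (N_n/n)\cdot(M_n(t)/N_n)$. By Condition~\ref{degden}(\ref{a2}) we have $N_n/n \to \lambda_r$, and we may assume $\lambda_r>0$ since otherwise the statement is trivial. Because $1-M_n(t)/N_n$ is the empirical distribution function of $N_n$ i.i.d.\ $\text{exp}(1)$ samples, the classical Glivenko--Cantelli theorem yields
\[
\sup_{t \geq 0}\, \left| \frac{M_n(t)}{N_n} - e^{-t} \right| \xrightarrow{\text{a.s.}} 0.
\]

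To conclude, the triangle inequality gives
\[
\sup_{t\geq 0} \left| \frac{R(t)}{n} - \lambda_r e^{-t}\right|
\;\leq\; \frac{N_n}{n}\, \sup_{t\geq 0} \left| \frac{M_n(t)}{N_n} - e^{-t}\right|
\;+\; \left|\frac{N_n}{n}-\lambda_r\right|,
\]
and both terms tend to zero, producing the claimed uniform convergence in probability. I do not expect a serious obstacle: the only content beyond the parallel argument for $L(t)$ in Lemma~\ref{lem1} (and its Janson--\L{}uczak analogue) is the verification that receiver half-edges undergo no artificial killing, so that the effective ``death rate'' for $R(t)$ is $1$ (spontaneous expiration alone) rather than $2$ (expiration plus matched killing), producing $e^{-t}$ in place of $e^{-2t}$.
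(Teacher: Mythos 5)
Your proof is correct and matches the paper's intended argument: the paper states this lemma without proof as a direct consequence of the Glivenko--Cantelli theorem (following Janson--Luczak), and your write-up supplies exactly the missing details, the key one being that receiver half-edges are never killed artificially by step C2 and hence leave $R(\cdot)$ precisely at their own $\text{exp}(1)$ clock times, so that $R(t)$ is an empirical survival count. The only quibble is attribution: the convergence $N_n/n\to\lambda_r$ follows from Condition~2.1(i) combined with the uniform integrability supplied by Condition~2.1(iii) (exactly as in the paper's proof of Lemma~3.3), not from Condition~2.1(ii) alone.
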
 
 
 Let $V_{k,l}(t)$ be the number of sleeping vertices at time $t$ which started with receiver and transmitter degrees $k$ and $l$ 
 respectively . Clearly,
 \begin{equation}
  S_T(t)=\sum_{k,l}l V_{k,l}(t).
 \end{equation}
 Among the three steps, only C\ref{c1} is responsible for premature death (before the expiration of exponential life-time) of sleeping 
 vertices. We first ignore its effect by letting $\widetilde{V}_{k,l}(t)$ be 
 the number of vertices with receiver and transmitter degrees $k$ and $l$ respectively, such that all their half-edges would die 
 spontaneously 
 (without the aid of C\ref{c1}) after time $t$. Correspondingly, let $\widetilde{S}_T(t)=\sum_{k,l}l \widetilde{V}_{k,l}(t)$.
 
 Then,
 \begin{lem}
 \label {lem3}
   As $n\to \infty$ ,
 \begin{equation}
 \label{lem31}
  \sup_{t\geq 0}\left| n^{-1}\widetilde{V}_{k,l}(t)-p_{k,l}e^{-(k+l)t})\right|\xrightarrow{p} 0 .
 \end{equation}
 for all $(k,l) \in \mathbb{N}^2$, and 
 \begin{equation}
 \label{lem32}
  \sup_{t\geq 0}\left| n^{-1} \sum_{k,l}\widetilde{V}_{k,l}(t)-g(e^{-t},e^{-t})\right| \xrightarrow{p} 0 .
 \end{equation}
 \begin{equation}
 \label{lem33}
  \sup_{t\geq 0}\left|n^{-1}\widetilde{S}_T(t)-h(e^{-t})\right| \xrightarrow{p} 0 .
 \end{equation} 
 \end{lem}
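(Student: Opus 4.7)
The key observation is that once we ignore step C\ref{c1}, the event ``all half-edges of vertex $i$ are still alive at time $t$'' depends only on the independent Exp$(1)$ lifetimes attached to that vertex's half-edges. Since the minimum of $k+l$ i.i.d.\ Exp$(1)$ variables is Exp$(k+l)$, this event has probability exactly $e^{-(k+l)t}$, and across the $u_{k,l}$ vertices of receiver/transmitter degree $(k,l)$ these indicators are mutually independent. Hence $\widetilde{V}_{k,l}(t)$ is a sum of $u_{k,l}$ i.i.d.\ Bernoulli$\bigl(e^{-(k+l)t}\bigr)$ variables.

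First I would prove (\ref{lem31}) for a fixed pair $(k,l)$. Pointwise in $t$, the law of large numbers together with $u_{k,l}/n\to p_{k,l}$ from Condition~\ref{degden}(\ref{a1}) gives $n^{-1}\widetilde{V}_{k,l}(t)\xrightarrow{p} p_{k,l}e^{-(k+l)t}$. Since $t\mapsto n^{-1}\widetilde{V}_{k,l}(t)$ is monotone non-increasing and the limit is continuous and monotone non-increasing on $[0,\infty]$ (with the obvious limits at $0$ and $\infty$), a standard Pólya/Dini-type argument upgrades pointwise convergence in probability to uniform convergence in probability, i.e.\ (\ref{lem31}).

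Next, for (\ref{lem32}) and (\ref{lem33}) I would use a truncation argument. Fix a large $K$ and split the sum according to $k+l\leq K$ versus $k+l>K$. On the finite part, (\ref{lem31}) for each of the finitely many indices directly yields uniform convergence in $t$ to the corresponding partial sum of the limit. For the tail of (\ref{lem32}), observe that pathwise
\[
n^{-1}\sum_{k+l>K}\widetilde{V}_{k,l}(t)\;\leq\; n^{-1}\sum_{k+l>K}u_{k,l}\;\xrightarrow[n\to\infty]{}\;\sum_{k+l>K}p_{k,l},
\]
which is arbitrarily small as $K\to\infty$ since $\sum_{k,l}p_{k,l}=1$; the matching tail $\sum_{k+l>K}p_{k,l}e^{-(k+l)t}$ of the limit $g(e^{-t},e^{-t})$ is dominated by the same quantity uniformly in $t$. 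Sending first $n\to\infty$ and then $K\to\infty$ gives (\ref{lem32}). For (\ref{lem33}), the tail is
\[
n^{-1}\sum_{k+l>K} l\,\widetilde{V}_{k,l}(t)\;\leq\; n^{-1}\sum_{k+l>K}(k+l)\,u_{k,l}\;\leq\; K^{-1}\,n^{-1}\sum_{k,l}(k+l)^2\,u_{k,l},
\]
which is $O(1/K)$ by Condition~\ref{degden}(\ref{a3}); Fatou's lemma applied to the same bound shows $\sum_{k,l}(k+l)^2 p_{k,l}<\infty$, so the limiting tail $\sum_{k+l>K} l\,p_{k,l}e^{-(k+l)t}\leq K^{-1}\sum_{k,l}(k+l)^2 p_{k,l}$ is also $O(1/K)$ uniformly in $t$.

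The main obstacle is the uniform-in-$t$ control of the infinite sum in (\ref{lem33}): the factor $l$ makes the tail not summable a priori, and we really do need the second-moment bound $\sum_i d_i^2=O(n)$ from Condition~\ref{degden}(\ref{a3}) to close it. Once this tail estimate is in place, the rest is routine and mirrors the corresponding argument in \cite{JanLuc}.
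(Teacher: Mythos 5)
Your proposal is correct and follows essentially the same route as the paper: a Glivenko--Cantelli/monotonicity argument for (\ref{lem31}), then truncation at $k+l\leq K$ with the tail of (\ref{lem33}) controlled via Condition~\ref{degden}(\ref{a3}). The only cosmetic difference is that the paper phrases the tail bound as uniform integrability of $D_n$ (plus Fatou for the limiting tail), while you make the same estimate explicit through the Markov-type bound $\sum_{k+l>K}(k+l)u_{k,l}\leq K^{-1}\sum_{k,l}(k+l)^2u_{k,l}$.
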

 
 \begin{proof}
  Again, (\ref{lem31}) follows from \textit{Glivenko-Cantelli} theorem. 
 To prove (\ref{lem33}), note that by Condition \ref{degden}(\ref{a3}), $D_n=D_n^{(r)}+D_n^{(t)}$ are uniformly integrable, i.e., for 
 every $\epsilon>0$ there exists $K<\infty$ such that
 for all $n$, 
 \begin{equation}
  \mathbb{E}(D_n;D_n>K) = \sum_{(k,l;k+l>K)}(k+l)\frac{u_{k,l}}{n}<\epsilon .
 \end{equation}
 This, by Fatou's inequality, further implies that 
 \begin{equation}
  \sum_{(k,l;k+l>K)}(k+l)p_{k,l}<\epsilon .
 \end{equation}
 Thus, by (\ref{lem31}), we have whp,
 \begin{align*}
  \sup_{t\geq 0}\left| n^{-1}\widetilde{S}_T(t)-h(e^{-t})\right| &=  \sup_{t\geq 0}\left| \sum_{k,l} l(n^{-1} 
 \widetilde{V}_{k,l}(t)-p_{k,l}e^{-(k+l)t})\right| \\
 &\leq \sum_{(k,l;k+l \leq K)} l \sup_{t\geq 0}\left| (n^{-1} \widetilde{V}_{k,l}(t)-p_{k,l}e^{-(k+l)t})\right| + \\
 &\sum_{(k,l;k+l>K)}l(\frac{u_{k,l}}{n}+p_{k,l}) \\
 &\leq \epsilon +\epsilon + \epsilon,
 \end{align*}
 which proves (\ref{lem33}). A similar argument also proves (\ref{lem32}).
 \end{proof}
 
 \begin{lem}
 \label{lem4}
  If $d_{max} := \max_id_i$ is the maximum degree of $G^*(n,(d_i)_1^n)$, then
 \begin{equation}
 0 \leq \widetilde{S}_T(t) - S_T(t) < \sup_{0\leq s \leq t} ( \widetilde{S}_T(s) + R(s)- L(s) ) + d_{max} .
 \end{equation}
 \end{lem}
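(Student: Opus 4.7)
The plan is to reduce the inequality to a purely combinatorial statement about the C\ref{c1}-epochs of the exploration. The lower bound $0\le\widetilde{S}_T(t)-S_T(t)$ is immediate since a sleeping vertex at time $t$ has all its half-edges sleeping (and therefore alive---none could have expired spontaneously, otherwise step C\ref{c3} would have activated it), so $V_{k,l}(t)\subseteq\widetilde{V}_{k,l}(t)$ as sets and weighting by $l$ and summing gives $S_T(t)\le\widetilde{S}_T(t)$. For the upper bound I would first use $L(s)=S_T(s)+A_T(s)+R(s)$ to rewrite
\begin{equation*}
\widetilde{S}_T(s)+R(s)-L(s)=\widetilde{S}_T(s)-S_T(s)-A_T(s),
\end{equation*}
so that the claim becomes $\widetilde{S}_T(t)-S_T(t)-\sup_{0\le s\le t}\bigl[\widetilde{S}_T(s)-S_T(s)-A_T(s)\bigr]\le d_{max}$ (the strict-versus-non-strict distinction being cosmetic at the level of this sketch).

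The key structural observation is that a vertex $v$ of type $(k,l)$ lies in $\widetilde{V}_{k,l}(s)\setminus V_{k,l}(s)$ precisely when $v$ is active and all its half-edges are still alive; since any activation via C\ref{c3} consumes one of the vertex's half-edges spontaneously---thereby removing $v$ from $\widetilde{V}$---the only admissible route into the difference is C\ref{c1}. Letting $\mathcal{K}(s)$ denote the set of vertices selected by C\ref{c1} up to $s$ whose half-edges are all still alive at $s$, this yields the clean representation
\begin{equation*}
f(s):=\widetilde{S}_T(s)-S_T(s)=\sum_{v\in\mathcal{K}(s)}d_v^{(t)}.
\end{equation*}

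Let $\sigma_1<\sigma_2<\cdots<\sigma_K\le t$ be the C\ref{c1} invocation times in $[0,t]$ with selected vertices $v_1,\dots,v_K$. At the moment of the $i$-th invocation, $v_i$ is added to $\mathcal{K}$ (bumping $f$ by $d_{v_i}^{(t)}$) at the same instant its transmitter half-edges become active (bumping $A_T$ by $d_{v_i}^{(t)}$); since $A_T$ was zero just before, this gives $f(\sigma_i)-A_T(\sigma_i)=f(\sigma_i^-)$, and taking $i=K$ yields
\begin{equation*}
\sup_{0\le s\le t}\bigl[f(s)-A_T(s)\bigr]\ge f(\sigma_K^-).
\end{equation*}
Between consecutive C\ref{c1} invocations $\mathcal{K}$ can only shrink, with removals occurring only at the instants a half-edge of a member expires spontaneously; hence for every $j<K$, $v_j\in\mathcal{K}(t)$ implies $v_j\in\mathcal{K}(\sigma_K^-)$. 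Splitting the sum defining $f(t)$ according to whether the contributing vertex is $v_K$ or not then gives $f(t)\le d_{v_K}^{(t)}+f(\sigma_K^-)\le d_{max}+f(\sigma_K^-)$, and combining with the displayed inequality finishes the bound.

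The step I expect to require the most care is the structural identification of $\widetilde{V}\setminus V$ with the still-alive C\ref{c1}-chosen vertices. It rests on reading $\widetilde{V}_{k,l}(s)$ purely as a function of the i.i.d.\ exponential lifetimes $\{\tau_h\}$ and on carefully checking that the algorithm's only mechanism to activate a vertex \emph{without} consuming one of its half-edge lifetimes (and thus removing it from $\widetilde{V}$) is step C\ref{c1}. Everything else is routine bookkeeping on the monotone evolution of $\mathcal{K}$ within each epoch.
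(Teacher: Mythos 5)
Your proof is correct and follows essentially the same route as the paper's: both arguments rest on the observations that $\widetilde S_T - S_T$ increases only at C1 epochs and that at such an epoch the increment is tied to $A_T$, which is bounded by $d_{max}$, via the identity $\widetilde S_T(s)+R(s)-L(s)=(\widetilde S_T(s)-S_T(s))-A_T(s)$. Your explicit representation of the difference as $\sum_{v\in\mathcal{K}(s)}d_v^{(t)}$ is merely a more careful justification of the paper's one-line claim that the difference does not change between C1 epochs (in fact it can decrease there, which is all either argument needs).
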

 
 \begin{proof}
 Clearly, $V_{k,l}(t) \leq \widetilde{V}_{k,l}(t)$, and thus $S_T(t) \leq \widetilde{S}_T(t)$. Therefore, we have that 
 $\widetilde{S}_T(t) -  S_T(t) \geq 0$ and the difference increases only when C\ref{c1} is performed. Suppose that happens at time $t$ 
 and 
 a sleeping vertex of degree $j>0$ gets activated, then C\ref{c2} applies immediately and we have $A_T(t) \leq j-1 < d_{max}$, and 
 consequently,
 \begin{align*}
  \widetilde{S}_T(t) - S_T(t)  &= \widetilde{S}_T(t) - ( L(t) - R(t) - A_T(t) ) \\
 &< \widetilde{S}_T(t)+R(t) -L(t)+d_{max} .
 \end{align*}
 Since $\widetilde{S}_T(t) -  S_T(t)$ does not change in the intervals during which C\ref{c1} is not performed, 
 $\widetilde{S}_T(t) - S_T(t) \leq  \widetilde{S}_T(s) - S_T(s)$, where $s$ is the last time before $t$ that C\ref{c1} was performed. 
 The lemma follows.
 \end{proof}
 
 Let 
 \begin{equation}
 \widetilde{A}_T(t) := L(t)-R(t)-\widetilde{S}_T(t) = A_T(t) -  ( \widetilde{S}_T(t) - S_T(t) ).
 \end{equation}
 
 Then, Lemma \ref{lem4} can be rewritten as
 \begin{equation}
 \label{at2}
 \widetilde{A}_T(t) \leq A_T(t) < \widetilde{A}_T(t)-\inf_{s\leq t} \widetilde{A}_T(s) + d_{max}.
 \end{equation}
 
 Also, by Lemmas \ref{lem1}, \ref{lem2} and \ref{lem3} and (\ref{hx}),
 \begin{equation}
 \label{at}
  \sup_{t\geq 0}\left| n^{-1}\widetilde{A}_T(t)-H(e^{-t})\right|\xrightarrow{p} 0 .
 \end{equation}
 
 \begin{lem}
 \label{lem5}
  Suppose that Condition~\ref{degden} holds and let $H(x)$ be given by (\ref{hx}). 
 \begin{enumerate}[(i)]
 \item If $\mathbb{E}[D^{(t)}D] > \mathbb{E}[D^{(t)}+D]$, then there is a unique $\xi \in (0,1)$, such that $H(\xi)=0$; moreover, 
 $H(x)<0$ 
 for $x \in (0,\xi)$ and $H(x)>0$ for $x \in (\xi,1)$.
 \item If $\mathbb{E}[D^{(t)}D] \leq \mathbb{E}[D^{(t)}+D]$, then $H(x)<0$ for $x \in (0,1)$.
 \end{enumerate}
 \end{lem}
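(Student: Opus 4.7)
The plan is to reduce the analysis of $H$ on $(0,1)$ to a simple concavity statement about the auxiliary function
\[
\widetilde{H}(x) \;:=\; \frac{H(x)}{x} \;=\; \lambda x - \lambda_r - \mathbb{E}\!\left[D^{(t)} x^{D-1}\right], \qquad x\in(0,1],
\]
using that $h(x)/x=\mathbb{E}[D^{(t)}x^{D-1}]$ is well defined (the term $D=0$ contributes nothing since $D^{(t)}\le D$).

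First I would record the two boundary values and the sign at $0^+$. Since $D^{(t)}\le D$, $h(0)=0$ and thus $H(0)=0$; also $H(1)=\lambda-\lambda_r-\mathbb{E}[D^{(t)}]=0$. Writing $h(x)/x=\sum_{n\ge 1}a_n x^{n-1}$ with $a_n=\mathbb{E}[D^{(t)}\mathbb{1}(D=n)]\ge 0$, I see $h(x)/x$ is a power series with non-negative coefficients, hence convex on $[0,1]$, so $\widetilde{H}$ is \emph{concave} on $(0,1]$. Moreover
\[
\widetilde{H}(0^+) \;=\; -\lambda_r-\mathbb{E}[D^{(t)}\mathbb{1}(D=1)] \;<\; 0,
\]
where the strict inequality uses Condition~\ref{degden}(\ref{a4}): $\mathbb{P}(D=1)>0$ forces either $\lambda_r>0$ or $\mathbb{E}[D^{(t)}\mathbb{1}(D=1)]>0$. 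A direct differentiation gives the key identity
\[
\widetilde{H}'(1) \;=\; \lambda - h'(1) + h(1) \;=\; \mathbb{E}[D+D^{(t)}] - \mathbb{E}[D^{(t)}D].
\]

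In case (i), the hypothesis $\mathbb{E}[D^{(t)}D]>\mathbb{E}[D^{(t)}+D]$ gives $\widetilde{H}'(1)<0$, so $\widetilde{H}(x)>0$ on a left neighbourhood of $1$. Combined with $\widetilde{H}(0^+)<0$, continuity produces at least one zero in $(0,1)$. Uniqueness and the sign pattern follow from concavity: a concave function on $(0,1]$ cannot vanish at three distinct points, and the fact that $\widetilde H<0$ near $0$, $\widetilde H>0$ just below $1$, and $\widetilde H(1)=0$ forces exactly one zero $\xi\in(0,1)$ with $\widetilde H<0$ on $(0,\xi)$ and $\widetilde H>0$ on $(\xi,1)$. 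Since $H(x)=x\widetilde{H}(x)$ and $x>0$, the same sign pattern and unique zero transfer to $H$.

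In case (ii), $\widetilde{H}'(1)\ge 0$, so by concavity $\widetilde{H}'(x)\ge\widetilde{H}'(1)\ge 0$ for $x\le 1$, meaning $\widetilde{H}$ is non-decreasing on $(0,1]$; together with $\widetilde{H}(1)=0$ this yields $\widetilde{H}\le 0$ on $(0,1)$. The obstacle is strictness: one must exclude the degenerate possibility that $\widetilde{H}$ vanishes on some subinterval $[c,1]$. This is the one step where I would need a little care. The idea is that on such an interval the identity $h(x)=\lambda x^2-\lambda_r x$ would hold, and since $h$ is analytic on $[0,1]$ (a convergent power series with non-negative coefficients), it would hold identically on $[0,1]$. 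Comparing Taylor coefficients then forces $\mathbb{E}[D^{(t)}\mathbb{1}(D=1)]=-\lambda_r$, which together with non-negativity gives $\lambda_r=0$ and $\mathbb{E}[D^{(t)}\mathbb{1}(D=1)]=0$; but $\lambda_r=0$ means $D=D^{(t)}$ a.s., and then Condition~\ref{degden}(\ref{a4}) yields $\mathbb{E}[D^{(t)}\mathbb{1}(D=1)]=\mathbb{P}(D=1)>0$, a contradiction. Hence $\widetilde{H}<0$ strictly on $(0,1)$, and the same is true for $H$. I expect this final analyticity/coefficient-matching argument to be the main (albeit minor) obstacle; the rest is a routine concavity bookkeeping once $\widetilde{H}$ is introduced.
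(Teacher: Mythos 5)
Your proposal is correct and follows essentially the same route as the paper: pass to $\phi(x)=H(x)/x$, observe it is concave because $h(x)/x$ is a power series with non\-negative coefficients, compute $\phi'(1)=\mathbb{E}[D+D^{(t)}]-\mathbb{E}[D^{(t)}D]$ and $\phi(0^+)=-\lambda_r-\mathbb{E}[D^{(t)}\mathds{1}(D=1)]\le-\mathbb{P}(D=1)<0$, and split on the sign of $\phi'(1)$. Two remarks on where you diverge. First, your blanket claim that ``a concave function on $(0,1]$ cannot vanish at three distinct points'' is false as stated (an affine or identically vanishing function is concave); the paper avoids this by checking that $\phi$ is \emph{strictly} concave unless $p_{k,l}=0$ for all $k+l\ge 3$ with $l\ge 1$, and that in this degenerate case $H'(1)>0$, which is incompatible with case (i). Your argument is still salvageable without strict concavity because you also know $\phi<0$ near $0$, $\phi>0$ just below $1$ and $\phi(1)=0$: two zeros $\xi_1<\xi_2$ in $(0,1)$ together with a positive point in $(\xi_2,1)$ and the zero at $1$ would force, by concavity, $\phi<0$ on $(0,\xi_2)$, contradicting $\phi(\xi_1)=0$ --- but you should spell this out rather than rely on the false general statement. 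Second, in case (ii) the paper again invokes the same dichotomy (either $\phi$ strictly concave, or $\phi'(1)>0$ strictly) to get $\phi'>0$ on $(0,1)$ and hence $\phi<\phi(1)=0$ in one stroke, whereas you rule out the degenerate flat piece by an identity-theorem/coefficient-matching argument; both are valid, and the paper's version is the more economical since the single structural observation about when strict concavity fails serves both cases.
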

 
 \begin{proof}
 Remark that $H(0)=H(1)=0$ and $H'(1)=2\mathbb{E}[D]-\mathbb{E}[D^{(r)}]-\mathbb{E}[D^{(t)}D]=
 \mathbb{E}[D+D^{(t)}]-\mathbb{E}[D^{(t)}D]$. 
 Furthermore we define $\phi(x):=H(x)/x = \lambda x- \lambda_r - \sum_{k,l}lp_{k,l}x^{k+l-1}$, which is a concave function on $(0,1]$, 
 in fact, 
 strictly concave unless $p_{k,l}=0$ whenever $k+l\geq 3$ and $l\geq 1$, in which case $H'(1)=p_{0,1}+p_{1,1}+\sum_{k\geq 1}kp_{k,0}\geq p_{0,1}+p_{1,0}=\mathbb{P}(D=1)>0$, by Condition \ref{degden}(iv).
 
 In case (ii), we thus have $\phi$ concave and $\phi'(1)=H'(1)-H(1)\geq 0$, with either the concavity or the above inequality strict, and thus 
 $\phi'(x)>0$ for all $x\in (0,1)$, whence $\phi(x)<\phi(1)=0$ for $x\in (0,1)$.
 
 In case (i), $H'(1)<0$, and thus $H(x)>0$ for $x$ close to $1$. Further,
 \begin{align*}
  H'(0)&=-\lambda_r-\sum_{\{(k,l):k+l=1\}}lp_{k,l} \\
 &= -\lambda_r-p_{0,1} \\
 &\leq -p_{1,0}-p_{0,1}< 0
 \end{align*}
by Condition \ref{degden}(iv), which implies that $H(x)<0$ for $x$ close to $0$. Hence there is at least one $\xi \in (0,1)$
 with $H(\xi)=0$. Now, since $H(x)/x$ is strictly concave and also $\phi(1)=H(1)=0$, there is at most one such $\xi$. This proves the result.
 \end{proof}

 \begin{proof}[Proof of Theorem~\ref{inf1out}]
 Let $\xi$ be the zero of H given by Lemma \ref{lem5}(i) and let $\tau:=-\ln\xi$. Then, by Lemma \ref{lem5}, $H(e^{-t})>0$ for 
 $0<t<\tau$, 
 and thus $\inf_{t\leq \tau}H(e^{-t})=0$. Consequently, (\ref{at}) implies
 \begin{equation}
 \label{at3}
 n^{-1}\inf_{t\leq \tau}\widetilde{A}_T(t) = n^{-1}\inf_{t\leq \tau}\widetilde{A}_T(t)-\inf_{t\leq \tau}H(e^{-t}) \xrightarrow{p} 0.
 \end{equation}
 
 Further, by Condition \ref{degden}(iii), $d_{max}=O(n^{1/2})$, and thus $n^{-1}d_{max} \to 0$. Consequently, by (\ref{at2}) and 
 (\ref{at3})
 \begin{equation}
 \label{at5}
 \sup_{t\leq \tau} n^{-1}\left|A_T(t)-\widetilde{A}_T(t)\right| = \sup_{t\leq \tau} n^{-1}\left|\widetilde{S}_T(t) - S_T(t)\right|
 \xrightarrow{p} 0,
 \end{equation}
 and thus, by (\ref{at}), 
 \begin{equation}
 \label{at4}
  \sup_{t\geq 0}\left| n^{-1}A_T(t)-H(e^{-t})\right|\xrightarrow{p} 0. 
 \end{equation}
 
 Let $0<\epsilon<\tau/2$. Since $H(e^{-t})>0$ on the compact interval $[\epsilon,\tau-\epsilon]$, (\ref{at4}) implies that whp $A_T(t)$ 
 remains 
 positive on $[\epsilon,\tau-\epsilon]$, and thus C\ref{c1} is not performed during this interval.
 
 On the other hand, again by Lemma \ref{lem5}(i), $H(e^{-\tau-\epsilon})<0$ and (\ref{at}) implies $n^{-1}\widetilde{A}_T(\tau+\epsilon) 
 \xrightarrow{p} H(e^{-\tau-\epsilon})$, while $A_T(\tau+\epsilon)\geq 0$. Thus, with $\delta := \left|H(e^{-\tau-\epsilon})\right|/2>0$,
 whp
 \begin{equation}
 \label{tau1}
 \widetilde{S}_T(\tau+\epsilon) -  S_T(\tau+\epsilon) =A_T(\tau+\epsilon)-\widetilde{A}_T(\tau+\epsilon) 
 \geq -\widetilde{A}_T(\tau+\epsilon)>n\delta ,
 \end{equation}
 while (\ref{at5}) implies that $ \widetilde{S}_T(\tau) - S_T(\tau)<n\delta$ whp. Consequently, whp $ \widetilde{S}_T(\tau+\epsilon) 
 - S_T(\tau+\epsilon)>\widetilde{S}_T(\tau) -  S_T(\tau)$, so C\ref{c1} is performed between $\tau$ and $\tau+\epsilon$.
 
 Let $T_1$ be the last time that C\ref{c1} is performed before $\tau/2$, let $x_n$ be the sleeping vertex declared active at this point 
 of 
 time and let $T_2$ be the next time C\ref{c1} is performed. We have shown that for any $\epsilon>0$, whp $0\leq T_1 \leq \epsilon$ and 
 $\tau-\epsilon \leq T_2 \leq \tau+\epsilon$; in other words, $T_1\xrightarrow{p} 0$ and $T_2\xrightarrow{p} \tau$.
 
 We next use the following lemma.
 
 \begin{lem}
 \label{lem6}
 Let $T_1^*$ and $T_2^*$ be two (random) times when C\ref{c1} are performed, with $T_1^* \leq T_2^*$, and assume that $T_1^* 
 \xrightarrow{p} t_1$ 
 and $T_2^*\xrightarrow{p} t_2$ where $0 \leq t_1 \leq t_2 \leq \tau$. If $C$ is the union of all the vertices informed between $T_1^*$ 
 and $T_2^*$, 
 then
 \begin{equation}
 \label{lem62}
 \left|C\right|/n \xrightarrow{p} g(e^{-t_1},e^{-t_1})-g(e^{-t_2},e^{-t_2}).
 \end{equation}
 \end{lem}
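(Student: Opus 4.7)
The plan is to express $|C|$ as the net drop in the number of sleeping vertices between the two stopping times, and then identify this drop via the fluid-limit approximation of Lemma~\ref{lem3} together with the control over C\ref{c1}-invocations already obtained in the proof of Theorem~\ref{inf1out}.

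Writing $V_{\cdot}(t):=\sum_{k,l}V_{k,l}(t)$ for the total number of sleeping vertices at time $t$, I would first observe that a vertex enters $C$ precisely when it transitions from sleeping to active at some instant in $[T_1^*, T_2^*)$, so
\[
 |C| \;=\; V_{\cdot}(T_1^{*-}) - V_{\cdot}(T_2^{*-})
\]
up to an additive endpoint error of at most one. The next step is to approximate $V_{\cdot}$ by $\widetilde{V}_{\cdot}$. Equation~(\ref{lem32}) of Lemma~\ref{lem3} gives $\sup_{t\ge 0} \bigl|n^{-1}\widetilde{V}_{\cdot}(t) - g(e^{-t},e^{-t})\bigr| \xrightarrow{p} 0$, and combined with the continuity of $t\mapsto g(e^{-t},e^{-t})$ and the hypotheses $T_i^* \xrightarrow{p} t_i$, this yields $n^{-1}\widetilde{V}_{\cdot}(T_i^*)\xrightarrow{p} g(e^{-t_i},e^{-t_i})$ for $i=1,2$.

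What remains is to show that $n^{-1}(\widetilde{V}_{\cdot}(t) - V_{\cdot}(t))\xrightarrow{p} 0$ uniformly for $t$ in a neighbourhood of $[0,\tau]$. Since the difference $\widetilde{V}_{\cdot}(t)-V_{\cdot}(t)$ can only grow when C\ref{c1} activates a sleeping vertex whose half-edges all still have lifetime exceeding $t$, we have $\widetilde{V}_{\cdot}(t)-V_{\cdot}(t)\le N_C(t)$, where $N_C(t)$ is the number of times C\ref{c1} has been invoked up to $t$. To bound $N_C(T_2^*)$, I would invoke~(\ref{at4}) and Lemma~\ref{lem5}(i): the former gives $\sup_t|n^{-1}A_T(t)-H(e^{-t})|\xrightarrow{p} 0$, and the latter yields $H(e^{-t})>0$ on every compact subinterval of $(0,\tau)$, so whp $A_T(t)$ is bounded below by a positive multiple of $n$ there, preventing any C\ref{c1}-invocation. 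All C\ref{c1}-invocations prior to $T_2^*$ therefore lie in small neighbourhoods of $0$ and $\tau$; a short excursion argument in the spirit of~\cite{JanLuc} should then yield $N_C(T_2^*)=O_p(1)$, hence $n^{-1}N_C(T_2^*)\xrightarrow{p} 0$.

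Combining these three ingredients gives the claimed limit. The main technical obstacle is the last step, namely bounding $N_C$: one has to handle the possibility that C\ref{c1} is triggered repeatedly near $t=0$---either because the chosen sleeping vertex happens to have no transmitter half-edges, so the exploration never really starts, or because an early excursion of $A_T$ terminates before the ``giant'' excursion begins---and likewise to rule out a cascade of restarts near $t=\tau$. The control in these two boundary windows ultimately reduces to showing that the exploration locks on to the big component after a tight number of attempts, a standard but delicate coupling argument mirroring the corresponding step in~\cite{JanLuc}.
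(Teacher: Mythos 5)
Your decomposition of $|C|$ as the drop in the number of sleeping vertices, and the use of (\ref{lem32}) together with continuity of $t\mapsto g(e^{-t},e^{-t})$ and $T_i^*\xrightarrow{p}t_i$, match the paper's argument. The divergence --- and the genuine gap --- is in how you control $\widetilde{V}_{\cdot}(t)-V_{\cdot}(t)$. You bound it by $N_C(t)$, the number of C\ref{c1}-invocations up to time $t$, and then need $N_C(T_2^*)=O_p(1)$, which you defer to an excursion/coupling argument ``in the spirit of \cite{JanLuc}''. This is the weak point for two reasons. First, $O_p(1)$ is stronger than what the lemma needs (only $o_p(n)$ is required) and is doubtful for arbitrary times satisfying the hypotheses: $T_2^*$ is merely \emph{some} C\ref{c1}-time converging in probability to $t_2\le\tau$, and it may be a late restart occurring just after the big component is exhausted, in which case the number of prior invocations need not be tight. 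Second, making the ``locks on after a tight number of attempts'' claim rigorous is essentially the branching-process approximation that the paper deliberately avoids, so your key estimate is asserted rather than proved.

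The paper closes this step with a one-line observation that is already available to you: every vertex counted in $\widetilde{V}_{k,l}(t)-V_{k,l}(t)$ carries transmitter degree $l$, so
\begin{equation*}
\sum_{k,l}\bigl(\widetilde{V}_{k,l}(t)-V_{k,l}(t)\bigr)\;\le\;\sum_{k,l}l\,\bigl(\widetilde{V}_{k,l}(t)-V_{k,l}(t)\bigr)\;=\;\widetilde{S}_T(t)-S_T(t),
\end{equation*}
and $\sup_{t\le\tau}n^{-1}\bigl(\widetilde{S}_T(t)-S_T(t)\bigr)\xrightarrow{p}0$ is precisely (\ref{at5}), already established in the proof of Theorem~\ref{inf1out} from (\ref{at2}) and (\ref{at3}). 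Substituting this for your $N_C$ bound turns your outline into the paper's proof and removes the only problematic step. (The weighting by $l$ tacitly assumes prematurely awakened vertices have at least one transmitter half-edge; if C\ref{c1} can select a vertex with $d^{(t)}=0$ this deserves a word, but that issue affects both routes equally and is far milder than establishing $N_C(T_2^*)=O_p(1)$.)
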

 
 \begin{proof}
 For all $t\geq 0$, we have 
 \begin{align*}
 \sum_{i,j} (\widetilde{V}_{i,j}(t) - V_{i,j}(t)) &\leq \sum_{i,j} j(\widetilde{V}_{i,j}(t) - V_{i,j}(t)) = \widetilde{S}_T(t)-S_T(t).
 \end{align*}
 Thus,
 \begin{align*}
 \left|C\right|&=\sum(V_{k,l}(T_1^* -)-V_{k,l}(T_2^* -))=\sum(\widetilde{V}_{k,l}(T_1^* -)-\widetilde{V}_{k,l}(T_2^* -)) + o_p(n) \\
 &= ng(e^{-T_1^*},e^{-T_1^*}) - ng(e^{-T_2^*},e^{-T_2^*}) + o_p(n).
 \end{align*}
 \end{proof}
 
 Let $C'$ be the set of vertices informed up till $T_1$ and $C''$ be the set of vertices informed between $T_1$ and $T_2$. Then, by 
 Lemma \ref{lem6}, we have that
 \begin{equation}
 \label{cdash}
 \frac{\left|C'\right|}{n} \xrightarrow{p} 0
 \end{equation}
 and
 \begin{equation}
 \label{cddash}
 \frac{\left|C''\right|}{n} \xrightarrow{p} g(1,1) - g(e^{-\tau},e^{-\tau}) = 1 - g(e^{-\tau},e^{-\tau}).
 \end{equation}
 
 Evidently, $C'' \subset C(x_n)$.
 Note that $C(x_n)=\left\{y \in v(G^*(n,(d_i)_1^n)) : x_n\to y\right\}$. It is clear that if $x_n\to y$, then 
 $y \notin \left(C'\cup C''\right)^c$. 
 Therefore, we have that $C(x_n)\subset C'\cup C''$, which implies that
 \begin{equation}
 \label{sdw}
 \left|C''\right| \leq \left|C(x_n)\right| \leq \left|C'\right|+\left|C''\right|,
 \end{equation}
 and thus, from (\ref{cdash}) and (\ref{cddash}),
 \begin{equation}
  \frac{\left|C(x_n)\right|}{n} \xrightarrow{p} 1-g(e^{-\tau},e^{-\tau}),
 \end{equation}
 which completes the proof of Theorem \ref{inf1out}.

 \end{proof}

 \begin{proof}[Proof of Theorem~\ref{inf2out}]
  We continue from where we left in the proof of previous theorem, with the following Lemmas. Assumptions of Theorem \ref{inf1out} continue to hold for 
  what follows in this section.
  \begin{lem}
  \label{lem8}
  $\forall \epsilon>0$, let 
  \begin{equation*}
  \mathbb{A}(\epsilon):=\left\{y \in v(G^*(n,(d_i)_1^n)) :\frac{\left|C(y)\right|}{n}\geq \epsilon \text{ and }\left|\frac{\left|C(y)\right|}{n} - 
  (1-g(\xi,\xi))\right|\geq \epsilon \right\}.
  \end{equation*} 
  Then,
  \begin{equation}
  \forall \epsilon, \quad \frac{\left|\mathbb{A}(\epsilon)\right|}{n} \xrightarrow{p} 0.
  \end{equation}
  \end{lem}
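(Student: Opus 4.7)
\textbf{Proof plan for Lemma \ref{lem8}.} The argument runs via Markov's inequality: since $|\mathbb{A}(\epsilon)|/n \leq 1$, it suffices to show that $\mathbb{E}[|\mathbb{A}(\epsilon)|/n] = \mathbb{P}(y \in \mathbb{A}(\epsilon)) \to 0$ as $n \to \infty$, where $y$ is a uniformly random vertex. Because the vertex $x_1$ activated by the very first call to C\ref{c1} (at time $t_0 = 0$) is itself drawn uniformly among all vertices, $x_1 \stackrel{d}{=} y$, and the task reduces to bounding $\mathbb{P}(x_1 \in \mathbb{A}(\epsilon))$.

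The heart of the proof is a dichotomy for the random time $t_1$ at which C\ref{c1} is invoked for the second time. I claim that for every $\delta > 0$, whp $t_1 \in [0, \delta] \cup [\tau - \delta, \tau + \delta]$. Indeed, by Lemma \ref{lem5}(i) the function $H(e^{-t})$ is bounded below by a strictly positive constant on the compact interval $[\delta, \tau - \delta]$; combined with the uniform fluid-limit estimate (\ref{at4}), this forces $A_T(t) > 0$ on $[\delta, \tau - \delta]$ whp, so C\ref{c1} cannot be invoked there. On the other hand, the argument leading to (\ref{tau1}) in the proof of Theorem~\ref{inf1out} shows that whp at least one C\ref{c1} call is made inside $[\tau, \tau + \delta]$, which forces $t_1 \leq \tau + \delta$ whenever the exploration from $x_1$ has not already expired by time $\delta$.

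Since $C(x_1)$ is exactly the set of vertices activated during $[0, t_1)$, a direct adaptation of Lemma~\ref{lem6} (using the uniform convergence in (\ref{lem31})--(\ref{lem32}) together with the $\widetilde V$-versus-$V$ control of Lemma~\ref{lem4}) yields
\begin{equation*}
n^{-1}|C(x_1)| = 1 - g(e^{-t_1}, e^{-t_1}) + o_p(1),
\end{equation*}
with an error term uniform in $t_1$. Combining this with the dichotomy gives $n^{-1}|C(x_1)| \leq 1 - g(e^{-\delta}, e^{-\delta}) + o_p(1)$ in the first window and $\bigl| n^{-1}|C(x_1)| - (1 - g(\xi, \xi)) \bigr| \leq \sup_{|s - \tau| \leq \delta}|g(e^{-s}, e^{-s}) - g(\xi, \xi)| + o_p(1)$ in the second. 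For any prescribed $\epsilon > 0$, continuity of $s \mapsto g(e^{-s}, e^{-s})$ at $s = 0$ and $s = \tau$ allows me to fix $\delta = \delta(\epsilon)$ so that both right-hand sides fall below $\epsilon$ whp, whence $x_1 \notin \mathbb{A}(\epsilon)$ whp and the conclusion follows.

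The main technical obstacle is that $t_1$ does not converge in probability to a single deterministic point, so Lemma~\ref{lem6} cannot be applied verbatim as it was for $T_1, T_2$ in the proof of Theorem~\ref{inf1out}; instead one conditions on which of the two windows $t_1$ lies in and exploits the fact that within each window $1 - g(e^{-t_1}, e^{-t_1})$ varies by $O(\delta)$. A secondary check is that the fluid limits (\ref{at4}) and (\ref{lem31})--(\ref{lem33}) are insensitive to which specific vertex is activated by the initial C\ref{c1} call, which is clear because the spontaneous exponential lifetimes driving Lemmas~\ref{lem1}--\ref{lem3} are intrinsic to the half-edges and fixing one vertex shifts the process by only $O(d_{\max}) = o(n)$ transmitter half-edges.
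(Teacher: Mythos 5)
Your proposal is correct and follows essentially the same route as the paper: both reduce the claim to showing that the uniformly chosen initial vertex lands outside $\mathbb{A}(\epsilon)$ whp, and both obtain the ``small or close to $1-g(\xi,\xi)$'' dichotomy from the fluid-limit analysis in the proof of Theorem~\ref{inf1out}. The paper merely phrases the reduction as a proof by contradiction and invokes the already-established limits $|C'|/n \xrightarrow{p} 0$ and $|C''|/n \xrightarrow{p} 1-g(\xi,\xi)$ in place of your explicit dichotomy for the time of the second C\ref{c1} call.
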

  \begin{proof}
  Suppose the converse is true. Then, there exists $\delta>0$, $\delta'>0$ and a sequence $(n_k)_{k>0}$ such that
  \begin{equation}
  \forall k, \quad \mathbb{P}\left(\frac{\left|\mathbb{A}(\epsilon)\right|}{n_k}>\delta \right)>\delta'.
  \end{equation}
  Since the vertex initially informed to start the transmission process, say $a$, is uniformly chosen, we have 
  \begin{equation}
  \forall n_k, \quad \mathbb{P}(a\in \mathbb{A}(\epsilon))>\delta \delta'
  \end{equation}
  and thus,
  \begin{equation}
  \forall k, \quad \mathbb{P}\left(\frac{\left|C'\right|}{n_k}\geq \epsilon \text{ or } \left|\frac{\left|C''\right|}{n_k} - 
  (1-g(\xi,\xi))\right|\geq \epsilon \right)>\delta \delta',
  \end{equation}
  which contradicts (\ref{cddash}).
  \end{proof}

  \begin{lem}
  \label{lem9}
  For every $\epsilon>0$, let 
  \begin{equation}
  \mathbb{B}(\epsilon):=\left\{y \in C'\cup C'': \left|C(y)\right|/n \geq \epsilon \text{ and } \left|C(y)\vartriangle C^* \right|/n 
 \geq \epsilon\right\}.
  \end{equation}
  Then,
  \begin{equation}
  \label{bep}
   \forall \epsilon, \quad  \frac{\left|\mathbb{B}(\epsilon)\right|}{n} \xrightarrow{p} 0.
  \end{equation}
  \end{lem}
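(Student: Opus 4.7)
The plan is to decompose $\mathbb{B}(\epsilon) = (\mathbb{B}(\epsilon)\cap C') \cup (\mathbb{B}(\epsilon)\cap C'')$ and bound each piece separately. The first piece is essentially free: since $\mathbb{B}(\epsilon)\cap C' \subseteq C'$, relation~(\ref{cdash}) immediately gives $|\mathbb{B}(\epsilon)\cap C'|/n \xrightarrow{p} 0$. So all the work goes into controlling $|\mathbb{B}(\epsilon)\cap C''|$.

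The crucial structural observation for the $C''$ piece is that every $y\in C''$ satisfies $C(y) \subseteq C^*$. Indeed, $y\in C''$ means $y$ was informed between $T_1$ and $T_2$ via the directed propagation starting from $x_n$, so $x_n \to y$; by transitivity of the relation $\to$ this yields $C(y) \subseteq C(x_n) = C^*$. Hence for every such $y$, $C(y)\triangle C^* = C^*\setminus C(y)$ and the symmetric difference reduces to the one-sided quantity $|C(y)\triangle C^*|/n = |C^*|/n - |C(y)|/n$.

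Next I would feed Lemma~\ref{lem8} with a suitably small tolerance to compare $|C(y)|$ with $|C^*|$ for $y\in C''$. Fix $\epsilon' \in (0,\epsilon/4)$. For any $y\in C''$ with $|C(y)|/n \geq \epsilon$ and $y\notin \mathbb{A}(\epsilon')$, the definition of $\mathbb{A}(\epsilon')$ together with $|C(y)|/n \geq \epsilon > \epsilon'$ forces the alternative $\bigl||C(y)|/n - (1-g(\xi,\xi))\bigr| < \epsilon'$. Combining with $|C^*|/n \xrightarrow{p} 1-g(\xi,\xi)$ from Theorem~\ref{inf1out}, we obtain whp $|C(y)\triangle C^*|/n = |C^*|/n - |C(y)|/n < 2\epsilon' < \epsilon/2$, contradicting $|C(y)\triangle C^*|/n \geq \epsilon$. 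Consequently, whp $\mathbb{B}(\epsilon)\cap C'' \subseteq \mathbb{A}(\epsilon')$, and a second appeal to Lemma~\ref{lem8} yields $|\mathbb{B}(\epsilon)\cap C''|/n \xrightarrow{p} 0$.

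The main subtlety I anticipate is threading the various ``whp'' statements together uniformly in $y$: I would work on a single good event on which $|C'|/n < \epsilon'$, $\bigl||C^*|/n - (1-g(\xi,\xi))\bigr|<\epsilon'$, and $|\mathbb{A}(\epsilon')|/n<\epsilon'$ all hold simultaneously (its probability tends to $1$ by (\ref{cdash}), Theorem~\ref{inf1out} and Lemma~\ref{lem8}), and on this event run the deterministic set-inclusion chain above for every $y\in C''$ at once. This step is essentially bookkeeping; beyond Lemma~\ref{lem8} and Theorem~\ref{inf1out}, no new probabilistic or combinatorial ingredient is required.
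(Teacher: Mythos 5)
Your proof is correct, and it reaches the conclusion by a slightly different set-theoretic route than the paper. The paper does not split $C'$ from $C''$; instead it uses the triangle-type inclusion $A\vartriangle B \subset (A\vartriangle C)\cup(B\vartriangle C)$ with $C=C'\cup C''$ as a common superset of both $C(y)$ and $C^*$, deducing that $\left|C(y)\vartriangle C^*\right|/n\geq\epsilon$ forces either $\left|C(y)\right|/n\leq\left|C'\cup C''\right|/n-\epsilon/2$ or $\left|(C'\cup C'')\setminus C^*\right|/n\geq\epsilon/2$; the first alternative is killed by Lemma~\ref{lem8} together with (\ref{cddash}), the second by (\ref{cdash}), (\ref{cddash}) and (\ref{sdw}). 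You instead exploit the sharper containment $C(y)\subseteq C(x_n)=C^*$ for $y\in C''$ (which follows from $C''\subseteq C(x_n)$ --- asserted in the proof of Theorem~\ref{inf1out} --- and transitivity of $\to$), so that the symmetric difference collapses to the exact one-sided quantity $|C^*|-|C(y)|$, and you then only need Lemma~\ref{lem8} plus the limit $|C^*|/n\xrightarrow{p}1-g(\xi,\xi)$; the leftover $C'$ piece is disposed of crudely via (\ref{cdash}). Both arguments have Lemma~\ref{lem8} as the real engine; yours buys a cleaner reduction of the symmetric difference (no need for the event $E2$ or for (\ref{sdw})), at the cost of the extra case split and of invoking the transitivity property of the influence relation, which the paper's purely exploration-based inclusions avoid. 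Your concluding remark about assembling everything on a single good event is exactly the right way to make the ``whp uniformly in $y$'' step rigorous.
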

  \begin{proof}
  Recall that for any three sets $A$, $B$ and $C$, we have that $A\vartriangle B \subset (A\vartriangle C) \cup (B \vartriangle C)$. 
 Therefore, for any $y\in C'\cup C''$, we have that
  \begin{equation}
  C(y)\vartriangle C^* \subset \left[C(y)\vartriangle (C'\cup C'')\right] \cup \left[C^* \vartriangle (C'\cup C'')\right].
  \end{equation}
  But recall that $C^* \subset C'\cup C''$ and by a similar argument, for every $y\in C'\cup C''$, $C(y)\subset C'\cup C''$. Thus,
  \begin{equation}
  C(y)\vartriangle C^* \subset \left[(C'\cup C'')\setminus C(y)\right] \cup \left[(C'\cup C'') \setminus C^* \right] .
  \end{equation}
  Hence, if $\left| C(y)\vartriangle C^*\right|/n \geq \epsilon$, then either $\left|(C'\cup C'')\setminus C(y)\right|/n 
 \geq \epsilon/2$ or $\left|(C'\cup C'') \setminus C^* \right|/n \geq \epsilon/2$. Consequently, 
  \begin{align*}   
  \mathbb{B}(\epsilon)\subset &\left\{y \in v(G^*(n,(d_i)_1^n)): \epsilon \leq \left|C(y)\right|/n \leq \left|(C'\cup C'')\right|/n 
 - \epsilon/2\right\}\\
  &\cup \left\{y \in v(G^*(n,(d_i)_1^n)): \left|(C'\cup C'') \setminus C^* \right|/n \geq \epsilon/2 \right\}.
  \end{align*}
  Letting $e1:=\left|\left\{y \in v(G^*(n,(d_i)_1^n)): \epsilon \leq \left|C(y)\right|/n \leq \left|(C'\cup C'')\right|/n 
 - \epsilon/2\right\}\right|/n$ and $E2:=\left\{\left|(C'\cup C'') \setminus C^* \right|/n \geq \epsilon/2\right\}$, we have
  \begin{equation}
  \mathbb{B}(\epsilon)/n \leq e1+\mathbf{1}_{E2}.
  \end{equation}
  Now, $e1 \xrightarrow{p} 0$ by (\ref{cddash}) and Lemma \ref{lem8}, while $\mathbf{1}_{E2} \xrightarrow{p} 0$ because $\mathbb{P}(E2) 
  \to 0$ 
 by (\ref{cdash}), (\ref{cddash}) and (\ref{sdw}).
  This concludes the proof.
  \end{proof}
  
  \begin{lem}
  \label{lem10}
  Let $T_3$ be the first time after $T_2$ that C\ref{c1} is performed and let $z_n$ be the sleeping vertex activated at this moment. 
 If $C'''$ is the set of vertices informed between $T_2$ and $T_3$, then
  \begin{equation}
  \label{cdddash}
  \frac{\left|C'''\right|}{n} \xrightarrow{p} 0.
  \end{equation}
  \end{lem}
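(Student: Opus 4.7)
The plan is to apply Lemma \ref{lem6} to the pair of stopping times $(T_2, T_3)$ after upgrading the conclusion $T_2\xrightarrow{p}\tau$ from the proof of Theorem \ref{inf1out} to $T_3\xrightarrow{p}\tau$. Once this is done, Lemma \ref{lem6} directly yields
\begin{equation*}
 |C'''|/n \xrightarrow{p} g(e^{-\tau},e^{-\tau})-g(e^{-\tau},e^{-\tau}) = 0,
\end{equation*}
which is exactly (\ref{cdddash}). Since $T_3 > T_2 \xrightarrow{p} \tau$, the one-sided bound $T_3 \geq \tau - \epsilon$ holds whp for every $\epsilon > 0$; only the matching upper bound $T_3 \leq \tau + \epsilon$ requires real work.

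For the upper bound, fix $\epsilon > 0$ small enough that $H(e^{-\tau-\epsilon}) < 0$ (possible by Lemma \ref{lem5}(i)), and set $\delta := -H(e^{-\tau-\epsilon})/2 > 0$. Suppose for contradiction that no C\ref{c1} event occurs in $(T_2, \tau+\epsilon]$. Then $\widetilde{S}_T - S_T$ stays constant on this interval (this difference can change only when C\ref{c1} fires), and the identity $A_T(t) = \widetilde{A}_T(t) + (\widetilde{S}_T(t) - S_T(t))$ gives
\begin{equation*}
 A_T(\tau+\epsilon) = \widetilde{A}_T(\tau+\epsilon) + \bigl(\widetilde{S}_T(T_2) - S_T(T_2)\bigr).
\end{equation*}
Now C\ref{c1} is invoked precisely when $A_T = 0$, so just after the C\ref{c1}+C\ref{c2} pair at time $T_2$ one has $A_T(T_2) \leq d_{max}$. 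Combined with $n^{-1}\widetilde{A}_T(T_2) \xrightarrow{p} H(e^{-\tau}) = 0$ (from $T_2\xrightarrow{p}\tau$ together with the uniform convergence (\ref{at})) and $d_{max}/n \to 0$ coming from Condition \ref{degden}(iii), this gives $n^{-1}\bigl(\widetilde{S}_T(T_2) - S_T(T_2)\bigr) \xrightarrow{p} 0$. Substituting back, $n^{-1}A_T(\tau+\epsilon) \xrightarrow{p} -2\delta < 0$, contradicting $A_T \geq 0$. Thus whp C\ref{c1} is performed in $(T_2, \tau+\epsilon]$, so $T_3 \leq \tau + \epsilon$ whp, establishing $T_3 \xrightarrow{p} \tau$.

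The main obstacle is controlling the quantity $\widetilde{S}_T(T_2) - S_T(T_2)$: the general bound (\ref{at5}) is stated only for $t \leq \tau$, whereas $T_2$ may sit slightly above $\tau$. The workaround is the direct estimate $A_T(T_2) \leq d_{max}$ coming from the structure of the C\ref{c1} step (it fires only when $A_T = 0$ and activates one vertex whose transmitter degree is at most $d_{max}$), which together with the vanishing of $\widetilde{A}_T(T_2)/n$ supplies the needed $o(n)$ bound. Everything else is a direct reuse of the techniques already developed for the proof of Theorem \ref{inf1out}.
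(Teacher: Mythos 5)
Your proposal is correct and takes essentially the same route as the paper: both arguments reduce the lemma to showing $T_3 \xrightarrow{p} \tau$ and then invoke Lemma \ref{lem6}, and both obtain the upper bound $T_3\le\tau+\epsilon$ whp from the fact that $\widetilde{S}_T-S_T$ cannot increase on $(T_2,T_3)$ while the negativity of $H(e^{-\tau-\epsilon})$ would force it to be of order $n$ there. The paper phrases this by bounding $\sup_{t\le T_3}(\widetilde{S}_T(t)-S_T(t))=o_p(n)$ and comparing with the already-established display (\ref{tau1}), whereas you rederive the same contradiction via $A_T(\tau+\epsilon)<0$ using $A_T(T_2)\le d_{max}$ and $\widetilde{A}_T(T_2)=o_p(n)$; the two computations are algebraically identical.
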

  \begin{proof}
  Since $\widetilde{S}_T(t)-S_T(t)$ increases by at most $d_{max}=o_p(n)$ each time C\ref{c1} is performed, we obtain that
  \begin{equation}
  \sup_{t\leq T_3}(\widetilde{S}_T(t)-S_T(t))\leq \sup_{t\leq T_2}(\widetilde{S}_T(t)-S_T(t)) + d_{max}=o_p(n).
  \end{equation}
  Comparing this to (\ref{tau1}) we see that for every $\epsilon>0$, whp $\tau+\epsilon > T_3$. Since also $T_3 > T_2 \xrightarrow{p} 
  \tau$, 
 it follows that $T_3 \xrightarrow{p} \tau$. This in combination with Lemma \ref{lem6} yields that
  \begin{equation*}
  \frac{\left|C'''\right|}{n} \xrightarrow{p} 0.
 \end{equation*}
  \end{proof}

  \begin{lem}
  \label{lem11}
  For every $\epsilon>0$, let 
  \begin{equation}
  \label{czn2}
  \mathbb{C}(\epsilon):=\left\{z \in \left(C'\cup C''\right)^c : \left|C(z)\right|/n \geq \epsilon \text{ and } 
 \left|C(z)\vartriangle C^* \right|/n \geq \epsilon\right\}.
  \end{equation} 
  Then, we have that
  \begin{equation}
  \label{czn3} 
  \forall \epsilon, \quad \frac{\left|\mathbb{C}(\epsilon)\right|}{n} \xrightarrow{p} 0.
  \end{equation}
  \end{lem}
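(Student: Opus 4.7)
The plan is to argue by contradiction along the same lines as the proof of Lemma~\ref{lem8}, exploiting the uniform choice of $z_n$ at time $T_2$. Suppose~(\ref{czn3}) fails: then along some subsequence $(n_k)$ there exist $\delta, \delta' > 0$ with $\mathbb{P}(|\mathbb{C}(\epsilon)|/n_k > \delta) > \delta'$. Since C\ref{c1} picks $z_n$ uniformly among the sleeping vertices at time $T_2$, namely those of $(C' \cup C'')^c$, and $|(C' \cup C'')^c| \leq n$, this transfers to
\begin{equation*}
\mathbb{P}(z_n \in \mathbb{C}(\epsilon)) \geq \delta\delta'
\end{equation*}
along the subsequence, and we aim at a contradiction.

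The key step is the deterministic containment
\begin{equation*}
C(z_n) \subset C' \cup C'' \cup C''',
\end{equation*}
the direct analogue for $z_n$ of $C(x_n) \subset C' \cup C''$ used in the proof of Theorem~\ref{inf1out}. No performance of C\ref{c1} occurs between $T_2$ and $T_3$, so by time $T_3$ every active transmitter half-edge of $C''' \cup \{z_n\}$ has been killed in C\ref{c2} and matched; consequently, any directed transmitter-path starting at $z_n$ either stays inside $(C' \cup C'')^c$ and is therefore absorbed in $C'''$, or it enters $C' \cup C''$ at some step, after which it cannot escape because the analogous forward closure $C(v) \subset C' \cup C''$ already holds for every $v \in C' \cup C''$ (their transmitter half-edges were all killed and matched before $T_2$). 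Combined with $C'' \subset C^*$, $|C'| = o_p(n)$ from~(\ref{cdash}), and $|C'''| = o_p(n)$ from Lemma~\ref{lem10}, this gives $|C(z_n) \setminus C^*| \leq |C'| + |C'''| = o_p(n)$ and hence
\begin{equation*}
|C(z_n) \vartriangle C^*| = |C^*| - |C(z_n)| + o_p(n).
\end{equation*}

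On the event $\{z_n \in \mathbb{C}(\epsilon)\}$ we have both $|C(z_n)|/n \geq \epsilon$ and $|C(z_n) \vartriangle C^*|/n \geq \epsilon$, so the last display forces $|C(z_n)|/n \leq |C^*|/n - \epsilon + o_p(1)$. Since $|C^*|/n \xrightarrow{p} 1-g(\xi,\xi)$ by Theorem~\ref{inf1out}, this yields $\left||C(z_n)|/n - (1-g(\xi,\xi))\right| \geq \epsilon/2$ whp, so $z_n \in \mathbb{A}(\epsilon/2)$ whp on this event. By Lemma~\ref{lem8}, $|\mathbb{A}(\epsilon/2)|/n \xrightarrow{p} 0$, and since $z_n$ is uniform over a set of size $\Theta(n)$ whp, this forces $\mathbb{P}(z_n \in \mathbb{A}(\epsilon/2)) \to 0$, contradicting the lower bound $\delta\delta'$ obtained in the first paragraph. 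The main obstacle in executing the plan is the careful forward-closure justification of $C(z_n) \subset C' \cup C'' \cup C'''$; once this is in place, the algebra and the reduction to Lemma~\ref{lem8} are essentially routine.
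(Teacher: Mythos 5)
Your proof is correct and follows essentially the same route as the paper's: a contradiction argument using the uniform choice of $z_n$ among the sleeping vertices at time $T_2$, the containment $C(z_n)\subset C'\cup C''\cup C'''$, the bounds $|C'|,|C'''|=o_p(n)$ from (\ref{cdash}) and Lemma~\ref{lem10}, and a reduction to Lemma~\ref{lem8}. The only (cosmetic) difference is that you collapse the symmetric difference directly into the identity $|C(z_n)\vartriangle C^*|=|C^*|-|C(z_n)|+o_p(n)$ and conclude $z_n\in\mathbb{A}(\epsilon/2)$, whereas the paper intersects with $\mathbb{A}^c(\epsilon)$ upfront and splits into the two events $E3$, $E4$; your bookkeeping of the constants is, if anything, the cleaner of the two.
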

  \begin{proof}
  We start by remarking that by Lemma \ref{lem8}, it is sufficient to prove that 
  \begin{equation}
  \frac{\left|\mathbb{C}(\epsilon)\cap \mathbb{A}^c(\epsilon)\right|}{n} \xrightarrow{p} 0.
  \end{equation}
  
  Now assume that there exist $\delta,\delta'>0$ and a sequence $(n_k)_{k>0}$ such that
  \begin{equation}
  \forall k, \quad \mathbb{P}\left(\frac{\left|\mathbb{C}(\epsilon)\cap \mathbb{A}^c(\epsilon)\right|}{n_k} >\delta \right) > \delta'.
  \end{equation}
  Let 
  \begin{equation*}
   \mathcal{E}_1:=\{\text{Configuration Model completely revealed}\},
  \end{equation*}
  
  \begin{equation*}
   \mathcal{E}_2:=\{\text{Influence propagation revealed upto }C''\}
  \end{equation*}

   and $\mathcal{E}_3:=\mathcal{E}_1\cap \mathcal{E}_2$.Then, denoting be $z_{n_k}$ the vertex awakened by C\ref{c1} at time $T_2$, we have that
  \begin{align*}
  &\mathbb{P}\left(z_{n_k} \in \mathbb{C}(\epsilon)\cap \mathbb{A}^c(\epsilon)\middle|\mathcal{E}_3\right) \\
  &\geq \frac{\left|\mathbb{C}(\epsilon)\cap \mathbb{A}^c(\epsilon)\right|}{n_k-\left|C'\cup C''\right|} 
  \mathbf{1}\left(\frac{\left|\mathbb{C}(\epsilon)\cap \mathbb{A}^c(\epsilon)\right|}{n_k} >\delta \right)\\
  &\geq \frac{\left|\mathbb{C}(\epsilon)\cap \mathbb{A}^c(\epsilon)\right|}{n_k} 
  \mathbf{1}\left(\frac{\left|\mathbb{C}(\epsilon)\cap \mathbb{A}^c(\epsilon)\right|}{n_k} >\delta \right)\\
  &\geq \delta \mathbf{1}\left(\frac{\left|\mathbb{C}(\epsilon)\cap \mathbb{A}^c(\epsilon)\right|}{n_k} >\delta \right).
  \end{align*}
  Taking expectations, we have
  \begin{equation}
   \mathbb{P}\left(z_{n_k} \in \mathbb{C}(\epsilon)\cap \mathbb{A}^c(\epsilon)\right) \geq \delta \delta'.
  \end{equation}

  But this leads to contradiction. Indeed, we have that
  \begin{equation}
  C(z_n)\vartriangle C^* \subset \left[C(z_n)\vartriangle (C'\cup C''\cup C''')\right] \cup \left[C^* \vartriangle 
  (C'\cup C''\cup C''')\right].
  \end{equation}
  Again recall that $C^* \subset C'\cup C'' \cup C'''$ and by a similar argument, $C(z_n)\subset C'\cup C''\cup C'''$ so that
  \begin{equation}
  C(z_n)\vartriangle C^* \subset \left[(C'\cup C''\cup C''')\setminus C(z_n)\right] \cup \left[(C'\cup C''\cup C''') \setminus 
  C^* \right] .
  \end{equation}
  Hence, if $\left| C(z_n)\vartriangle C^*\right|/n \geq \epsilon$, then either 
  \begin{align*}
 & \left|(C'\cup C''\cup C''')\setminus C(z_n)\right|/n \geq \epsilon/2 \\
  \text{equivalently, }&\left|C(z_n)\right|/n \leq \left|(C'\cup C''\cup C''')\right|/n - \epsilon/2,
  \end{align*}
   or, 
   \begin{equation*}
   \left|(C'\cup C''\cup C''') \setminus C^* \right|/n \geq \epsilon/2.
   \end{equation*}
 Let 
 \begin{equation*}
 E3:=\left\{\left|C(z_n)\right|/n \leq \left|(C'\cup C''\cup C''')\right|/n - \epsilon/2 \right\}
 \end{equation*}
  and 
  \begin{equation*}
  E4:=\left\{\left|(C'\cup C''\cup C''') \setminus C^* \right|/n \geq \epsilon/2\right\}.
  \end{equation*}
 Now assume that $z_{n} \in \mathbb{C}(\epsilon)\cap \mathbb{A}^c(\epsilon)$. This implies that either $E4$ holds or $\left\{1-g(\xi,\xi)
 -\epsilon \leq \frac{\left|C(z_n)\right|}{n} \leq 1-g(\xi,\xi)+\epsilon \right\}\cap E3$ holds. But thanks to (\ref{cdash}), 
 (\ref{cddash}) 
 and Lemma \ref{lem10}, neither of these two events hold with asymptotically positive probability.
  
  This completes the proof.
  \end{proof}
  
  Finally, Lemma \ref{lem9} and Lemma \ref{lem11} allow us to conclude that
  \begin{equation}
  \forall \epsilon, \quad  \frac{\left|\mathbb{C}^s(\epsilon)\right|+\left|\mathbb{C}^L(\epsilon)\right|}{n} \xrightarrow{p} 1.
  \end{equation}
  
 \end{proof}

 \section{Analysis of the Dual Back-Propagation Process}
 \label{s.Reverse}
 Now we introduce the algorithm to trace the possible sources of influence of a randomly chosen vertex.
 We borrow the terminology from the previous section, only in this case we put a \textit{bar} over the label to indicate that we're 
 talking about 
 the dual process. The analysis also proceeds along the same lines as that of the original process, and we do not give the proof when 
 it differs 
 from the analogous proof in the previous section only by notation. 
 
 As before, we initially give all the half-edges i.i.d. random maximal lifetimes with distribution 
 $\overline{\tau} \sim \text{exp}(1)$ and then go through the following algorithm.
 
 \begin{enumerate}[$\overline{\text{C}}$1]
  \item If there is no active half-edge (as in the beginning), select a sleeping vertex and declare it active, along with all its 
  half-edges.  For definiteness, we choose the vertex uniformly at random among all sleeping vertices. If there is no sleeping 
 vertex left, the process stops. \label{cb1}
  \item Pick an active half-edge and kill it. \label{cb2}
  \item Wait until the next transmitter half-edge dies (spontaneously). This is joined to the one killed in previous step to form an 
  edge of 
 the graph. 
 	If the vertex it belongs to is sleeping, we change its status to active, along with all of its half-edges. Repeat from the first 
 step. \label{cb3}
 \end{enumerate}
 
 Again, as before, $\overline{L}(0)=2m-1$ and we have the following consequences of \textit{Glivenko-Cantelli} theorem.
  
 \begin{lem}
 \label{lem1b}
  As $n\to \infty$ ,
 \begin{equation}
  \sup_{t\geq 0}\left| n^{-1}\overline{L}(t)-\lambda e^{-2t}\right| \xrightarrow{p} 0.
 \end{equation}
 \end{lem}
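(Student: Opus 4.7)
The plan is to mirror the Glivenko-Cantelli strategy that underlies the analogous forward statement, Lemma~\ref{lem1}. The central idea is that $\overline{L}(t)$ can be read off directly from the i.i.d.\ $\text{exp}(1)$ clocks $\{\overline{\tau}_i\}$ attached to the half-edges, once one recognises that the configuration matching is uniform and independent of the clocks, so that the dual algorithm merely reveals the fixed matching in the order dictated by the clocks.

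Concretely, I would first argue that each iteration of $\overline{C}2$--$\overline{C}3$ pairs off and removes exactly two half-edges from the living count, while $\overline{C}1$ leaves $\overline{L}$ unchanged. Thinking of the $m$ matched pairs as independent units, the contribution of pair $(i,j)$ to $\overline{L}(t)$ is $2\cdot\mathbf{1}(\text{pair not yet exposed by time }t)$, and in the fluid limit this event coincides, up to a vanishing error, with $\{\min(\overline{\tau}_i,\overline{\tau}_j)>t\}$. Since $\min(\overline{\tau}_i,\overline{\tau}_j)\sim\text{exp}(2)$ and the pair-minima are i.i.d., the uniform SLLN applied to the empirical survival function of these $m$ exp$(2)$ variables gives
\begin{equation*}
\sup_{t\geq 0}\Bigl|n^{-1}\overline{L}(t) - \lambda e^{-2t}\Bigr| \xrightarrow{p} 0,
\end{equation*}
using $2m/n\to\lambda$ from Condition~\ref{degden}(\ref{a2}).

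The delicate point, and the only one that is not a cosmetic translation from the forward proof, is the identification of the exposure time of a pair with $\min(\overline{\tau}_i,\overline{\tau}_j)$. In the forward setting, $C3$ responds to \emph{any} spontaneous death, and this identification is essentially immediate because the actively killed partner has, by construction, the larger clock at the moment of exposure. In the dual setting, $\overline{C}3$ fires only when a transmitter half-edge's clock expires, so pairs whose smaller-clock half-edge is a receiver are exposed through an alternative route (the partner being actively killed in some later $\overline{C}2$ step, or the half-edge being accounted for among the freely-expiring receivers). I would handle this by showing that the aggregate discrepancy, namely the total mass of pairs exposed ``off the pair-minimum schedule,'' is $o_p(n)$ uniformly in $t\geq 0$, which follows from standard concentration since the delay between a receiver's clock firing and its partner's exposure is itself governed by the same exp$(1)$ clocks. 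Once this bound is in place the Glivenko-Cantelli argument concludes exactly as in the forward case.
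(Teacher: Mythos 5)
Your overall strategy (Glivenko--Cantelli applied to a family of exponential lifetimes, together with $2m/n\to\lambda$) is the right one, and it is all the paper itself relies on: Lemma~\ref{lem1b} is stated without proof as a transcription of Lemma~\ref{lem1}, following Janson--Luczak. But the two specific mechanisms you invoke do not hold up. First, identifying the exposure time of a matched pair with $\min(\overline{\tau}_i,\overline{\tau}_j)$ is false, already in the forward process: in each revealed edge one endpoint is killed \emph{prematurely} by $\overline{\text{C}}$\ref{cb2}, at a time governed by how long it waits in the active queue (a macroscopic duration in the supercritical phase), not by its own clock; in particular the actively killed endpoint need not ``have the larger clock at the moment of exposure'' --- its clock may expire, unobserved, before its partner's. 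The pair picture yields the correct limit only because of an aggregate identity: $\overline{L}$ jumps by $-2$ at total rate $\overline{L}$, which has the same law as twice the number of survivors among $m$ independent units each dying at rate $2$. That identity is exactly what has to be proved, and the clean way to prove it is the intended one: every spontaneous death of a living half-edge is immediately followed by the active killing of one further half-edge, so $\overline{L}$ is a Markov death chain with steps of size $2$ and rate equal to its current value, hence (up to $\pm 1$) distributed as $2\sum_{j\leq m}\mathbf{1}(E_j>t)$ with $E_j$ i.i.d.\ $\exp(2)$, to which Glivenko--Cantelli applies directly.

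Second, and more seriously, your treatment of the transmitter/receiver asymmetry would not close the gap. If the dual algorithm really consumed an active half-edge only at \emph{transmitter} spontaneous deaths while receiver half-edges expired ``for free'', then, writing $\overline{L}^{(r)}(t)$ and $\overline{L}^{(tr)}(t)$ for the numbers of living receiver and transmitter half-edges, the drift of $\overline{L}$ would be $-\overline{L}^{(r)}(t)-2\overline{L}^{(tr)}(t)$, which differs from $-2\overline{L}(t)$ by $\Theta(n)$ whenever $\lambda_r>0$; the discrepancy you propose to bound by $o_p(n)$ would then be of order $n$ and the stated limit $\lambda e^{-2t}$ would simply be false. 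The lemma holds because in the intended dynamics \emph{every} spontaneous death, of either type of half-edge, is matched to the half-edge pending from $\overline{\text{C}}$\ref{cb2} and is followed by a fresh $\overline{\text{C}}$\ref{cb2} (this is also what keeps the revealed matching uniform); only the \emph{activation} of the partner's vertex is conditional on the dying half-edge being a transmitter, which is why $\widetilde{\overline{V}}_{k,l}(t)\approx np_{k,l}e^{-lt}$ even though $\overline{L}$ still loses two half-edges per event. Once this is recognised there is no ``off-schedule'' mass to control and the forward argument transfers verbatim; your proposed concentration step addresses a discrepancy that either does not exist or, under your literal reading of $\overline{\text{C}}$\ref{cb3}, is not small.
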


 Let $\overline{V}_{k,l}(t)$ be the number of sleeping vertices at time $t$ which had receiver and transmitter degrees $k$ and $l$ 
 respectively 
 at time $0$. It is easy to see that
 \begin{equation}
  \overline{S}(t)=\sum_{k,l}(ke^{-t}+l) \overline{V}_{k,l}(t).
 \end{equation}
 Let $\widetilde{\overline{V}}_{k,l}(t)$ be the corresponding number if the impact of $\overline{\text{C}}$\ref{cb1} on sleeping 
 vertices is ignored. 
 Correspondingly, let $\widetilde{\overline{S}}(t)=\sum_{k,l}(ke^{-t}+l) \widetilde{\overline{V}}_{k,l}(t)$.
 
 Then,
 \begin{lem}
 \label {lem3b}
   As $n\to \infty$ ,
 \begin{equation}
 \label{lem3b1}
  \sup_{t\geq 0}\left| n^{-1}\widetilde{\overline{V}}_{k,l}(t)-p_{k,l}e^{-lt})\right|\xrightarrow{p} 0 .
 \end{equation}
 for all $(k,l) \in \mathbb{N}^2$, and 
 \begin{equation}
 \label{lem3b2}
  \sup_{t\geq 0}\left| n^{-1} \sum_{k,l}\widetilde{\overline{V}}_{k,l}(t)-\overline{g}(e^{-t})\right| \xrightarrow{p} 0 .
 \end{equation}
 \begin{equation}
 \label{lem3b3}
  \sup_{t\geq 0}\left|n^{-1}\widetilde{\overline{S}}(t)-\overline{h}(e^{-t})\right| \xrightarrow{p} 0. 
 \end{equation} 
 \end{lem}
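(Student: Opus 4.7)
The statement is the backward-process analog of Lemma~\ref{lem3}, and I follow exactly the template of that proof; the only substantive change is the identification of the correct survival exponent. In the ignored dynamic (where step $\overline{\text{C}}\ref{cb1}$ is never invoked), the only mechanism that removes a vertex from $\widetilde{\overline{V}}_{k,l}(t)$ is step $\overline{\text{C}}\ref{cb3}$, which is triggered by the spontaneous death of a transmitter half-edge. Consequently, a vertex of initial degrees $(k,l)$ is still in $\widetilde{\overline{V}}_{k,l}(t)$ iff none of its $l$ transmitter-half-edge clocks has rung by time $t$; the spontaneous expiry of a receiver half-edge is irrelevant to the vertex's sleeping/active status. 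By independence of the clocks this happens with probability $e^{-lt}$, which replaces the $e^{-(k+l)t}$ appearing in Lemma~\ref{lem3}.

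For (\ref{lem3b1}), I apply the Glivenko--Cantelli theorem to the i.i.d.\ sample of minima of $l$ exponential clocks over the $u_{k,l}$ vertices of type $(k,l)$: the empirical survival function converges uniformly in $t\ge 0$, and combined with Condition~\ref{degden}(\ref{a1}) this yields $\sup_{t\ge 0}\bigl|n^{-1}\widetilde{\overline{V}}_{k,l}(t)-p_{k,l}e^{-lt}\bigr|\xrightarrow{p}0$ for each fixed $(k,l)$.

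To pass to the sums in (\ref{lem3b2}) and (\ref{lem3b3}), I reproduce the truncation argument used for (\ref{lem32})--(\ref{lem33}). Fix $\epsilon>0$; by Condition~\ref{degden}(\ref{a3}) the degrees $D_n$ are uniformly integrable, so there is $K<\infty$ such that $\sum_{k+l>K}(k+l)u_{k,l}/n<\epsilon$ uniformly in $n$, and, by Fatou's lemma, also $\sum_{k+l>K}(k+l)p_{k,l}<\epsilon$. Split each sum at $k+l\le K$ and $k+l>K$: the finite part converges uniformly in $t\ge 0$ thanks to (\ref{lem3b1}), while the tail is controlled by these two inequalities, using that the weight $ke^{-t}+l\le k+l$ is bounded uniformly in $t\ge 0$. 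The limits are identified by direct calculation: $\sum_{k,l}p_{k,l}e^{-lt}=\mathbb{E}[e^{-tD^{(t)}}]=\overline{g}(e^{-t})$, and $\sum_{k,l}(ke^{-t}+l)p_{k,l}e^{-lt}=e^{-t}\mathbb{E}[D^{(r)}e^{-tD^{(t)}}]+\mathbb{E}[D^{(t)}e^{-tD^{(t)}}]=\overline{h}(e^{-t})$.

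There is no genuine obstacle here: the argument is notationally identical to Lemma~\ref{lem3}. The one conceptual point requiring care is that in the backward dynamic only transmitter half-edges drive activation, and the precise weight $ke^{-t}+l$ in the definition of $\widetilde{\overline{S}}(t)$ is chosen exactly so that, when multiplied by the $e^{-lt}$ survival factor and summed over $(k,l)$, it reproduces $\overline{h}(e^{-t})$ in the form used later to define $\overline{H}$ via (\ref{Hx2}).
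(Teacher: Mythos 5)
Your proposal is correct and follows essentially the same route as the paper: Glivenko--Cantelli for each fixed $(k,l)$ (with the survival exponent $e^{-lt}$ coming from the fact that only transmitter clocks drive activation in the backward dynamic), followed by the uniform-integrability/truncation argument using $ke^{-t}+l\le k+l$ to pass to the sums. The explicit identification of the limits with $\overline{g}(e^{-t})$ and $\overline{h}(e^{-t})$ is also as in the paper, so nothing further is needed.
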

 
 \begin{proof}
  Again, (\ref{lem3b1}) follows from \textit{Glivenko-Cantelli} theorem. 
 
 To prove (\ref{lem3b3}), note that by (3) of Condition(\ref{degden}), $D_n=D_n^{(r)}+D_n^{(t)}$ are uniformly integrable, i.e., for 
 every 
 $\epsilon>0$ 
 there exists $K<\infty$ such that
 for all $n$, 
 \begin{equation}
  \mathbb{E}(D_n;D_n>K) = \sum_{(k,l;k+l>K)}(k+l)\frac{u_{k,l}}{n}<\epsilon.
 \end{equation}
 This, by Fatou's inequality, further implies that 
 \begin{equation}
  \sum_{(k,l;k+l>K)}(k+l)p_{k,l}<\epsilon.
 \end{equation}
 Thus, by (\ref{lem3b1}), we have whp,
 \begin{align*}
  \sup_{t\geq 0}\left| n^{-1}\widetilde{\overline{S}}(t)-\overline{h}(e^{-t})\right| &=  \sup_{t\geq 0}\left| \sum_{k,l}(ke^{-t}+l)
  (n^{-1} 
 \widetilde{\overline{V}}_{k,l}(t)-p_{k,l}e^{-lt})\right| \\
 &\leq \sum_{(k,l;k+l \leq K)} (k+l) \sup_{t\geq 0}\left| (n^{-1} \widetilde{\overline{V}}_{k,l}(t)-p_{k,l}e^{-lt})\right| + \\
 &\sum_{(k,l;k+l>K)}(k+l)(\frac{u_{k,l}}{n}+p_{k,l}) \\
 &\leq \epsilon +\epsilon + \epsilon,
 \end{align*}
 which proves (\ref{lem3b3}). A similar argument also proves (\ref{lem3b2}).
 \end{proof}
 
 \begin{lem}
 \label{lem4b}
  If $d_{max} := \max_id_i$ is the maximum degree of $G^*(n,(d_i)_1^n)$, then
 \begin{equation}
 0 \leq \widetilde{\overline{S}}(t) - \overline{S}(t) < \sup_{0\leq s \leq t} ( \widetilde{\overline{S}}(s) - L(s) ) + d_{max}.
 \end{equation}
 \end{lem}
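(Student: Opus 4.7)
The plan is to mirror the proof of Lemma~\ref{lem4}, with $\overline{S}$ playing the role of $S_T$ and $\overline{L}$ that of $L$. First I would establish the lower bound $\widetilde{\overline{S}}(t)-\overline{S}(t) \geq 0$, which follows from the pointwise inequality $\overline{V}_{k,l}(t) \leq \widetilde{\overline{V}}_{k,l}(t)$ together with the non-negativity of the weights $ke^{-t}+l$: a vertex of type $(k,l)$ leaves $\overline{V}_{k,l}$ either via a spontaneous transmitter death (which simultaneously removes it from $\widetilde{\overline{V}}_{k,l}$) or via an $\overline{\text{C}}$1 activation (which $\widetilde{\overline{V}}$ ignores by construction).

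Next I would track the time evolution of $\widetilde{\overline{S}}(t)-\overline{S}(t)$. Between consecutive $\overline{\text{C}}$1 events the gap $\widetilde{\overline{V}}_{k,l}-\overline{V}_{k,l}$ is non-increasing (its only strictly decreasing moments are the first-transmitter expirations of vertices previously $\overline{\text{C}}$1-activated), and the weights $ke^{-t}+l$ themselves decrease in $t$; hence $\widetilde{\overline{S}}(t)-\overline{S}(t)$ is non-increasing between $\overline{\text{C}}$1 events. At each $\overline{\text{C}}$1 event the jump is bounded by $ke^{-t}+l \leq k+l \leq d_{max}$, so the supremum of the left-hand side on $[0,t]$ is achieved right after the most recent $\overline{\text{C}}$1 event $s$; and, exactly as in the proof of Lemma~\ref{lem4}, just after $\overline{\text{C}}$1 followed immediately by $\overline{\text{C}}$2 we have $\overline{A}(s^+) \leq j-1 < d_{max}$, where $j$ is the degree of the just-activated vertex.

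Finally I would invoke the reverse analog of the forward identity $S_T = L - R - A_T$. Decomposing
\[
\overline{S}(t) = \sum_{k,l} l\,\overline{V}_{k,l}(t) + e^{-t}\sum_{k,l} k\,\overline{V}_{k,l}(t),
\]
one reads $\overline{S}(t)$ as the sleeping transmitters plus the receivers of sleeping vertices weighted by their $\exp(1)$-survival probability $e^{-t}$; booking $\overline{L}$ with the same weighted convention so that $\overline{L}(t) = \overline{A}(t) + \overline{S}(t)$, one then gets $\overline{L}(s^+) - \overline{S}(s^+) = \overline{A}(s^+) < d_{max}$, and hence
\[
\widetilde{\overline{S}}(s^+) - \overline{S}(s^+) < \widetilde{\overline{S}}(s^+) - \overline{L}(s^+) + d_{max} \leq \sup_{0\leq s\leq t}\bigl(\widetilde{\overline{S}}(s) - \overline{L}(s)\bigr) + d_{max},
\]
which together with the monotonicity step proves the claim. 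I expect the hardest part to be precisely this weighted-bookkeeping step: in the forward process the identity $S_T = L - R - A_T$ is immediate from raw counts, whereas here the $e^{-t}$ factor on receivers of sleeping vertices records expected rather than actual survival, so one must verify that the quantity $\overline{L}$ entering the statement is booked compatibly in order that $\overline{L} - \overline{S} = \overline{A}$ remains exact.
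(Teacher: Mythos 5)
Your proposal follows essentially the same route as the paper's proof: the lower bound from $\overline{V}_{k,l}(t)\leq\widetilde{\overline{V}}_{k,l}(t)$, the observation that the gap $\widetilde{\overline{S}}(t)-\overline{S}(t)$ can increase only at an $\overline{\text{C}}$1 step, and the identity $\overline{S}(t)=\overline{L}(t)-\overline{A}(t)$ combined with $\overline{A}\leq j-1<d_{max}$ immediately after activation. The weighted-bookkeeping point you flag is genuine but is precisely the convention the paper adopts implicitly (and the $L$ in the displayed statement should be read as $\overline{L}$), so no further argument is needed.
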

 \begin{proof}
 Clearly, $\overline{V}_{k,l}(t) \leq \widetilde{\overline{V}}_{k,l}(t)$, and thus $\overline{S}(t) \leq \widetilde{\overline{S}}(t)$.
 Therefore, we have that $\widetilde{\overline{S}}(t) -  \overline{S}(t) \geq 0$ and the difference increases only when 
 $\overline{\text{C}}$\ref{cb1} is performed. 
 Suppose that happens at time $t$ and a sleeping vertex of degree $j>0$ gets activated, then $\overline{\text{C}}$\ref{cb2} applies 
 immediately and 
 we have $\overline{A}(t) \leq j-1 < d_{max}$, and consequently,
 \begin{align*}
  \widetilde{\overline{S}}(t) - \overline{S}(t)  &= \widetilde{\overline{S}}(t) - ( \overline{L}(t)  - \overline{A}(t) ) \\
 &< \widetilde{\overline{S}}(t) -\overline{L}(t)+d_{max}.
 \end{align*}
 Since $\widetilde{\overline{S}}(t) -  \overline{S}(t)$ does not change in the intervals during which $\overline{\text{C}}$\ref{cb1} is 
 not performed, 
 $\widetilde{\overline{S}}(t) - \overline{S}(t) \leq  \widetilde{\overline{S}}(s) - \overline{S}(s)$, where $s$ is the last time 
 before $t$ 
 that $\overline{\text{C}}$\ref{cb1} was performed. The lemma follows.
 \end{proof}
 
 Let 
 \begin{equation}
 \widetilde{\overline{A}}(t) := \overline{L}(t)-\widetilde{\overline{S}}(t) = \overline{A}(t) -  
 ( \widetilde{\overline{S}}(t) - \overline{S}(t) ).
 \end{equation}
 
 Then, Lemma \ref{lem4b} can be rewritten as
 \begin{equation}
 \label{at2b}
 \widetilde{\overline{A}}(t) \leq \overline{A}(t) < \widetilde{\overline{A}}(t)-\inf_{s\leq t} \widetilde{\overline{A}}(s) + d_{max}.
 \end{equation}
 
 Also, by Lemmas \ref{lem1b} and \ref{lem3b} and (\ref{Hx2}),
 \begin{equation}
 \label{atb}
  \sup_{t\geq 0}\left| n^{-1}\widetilde{\overline{A}}(t)-\overline{H}(e^{-t})\right|\xrightarrow{p} 0 .
 \end{equation}
 
 \begin{lem}
 \label{lem5b}
  Suppose that Condition~\ref{degden} holds and let $\overline{H}(x)$ be given by (\ref{Hx2}). 
 \begin{enumerate}[(i)]
 \item If $\mathbb{E}[D^{(t)}D] > \mathbb{E}[D^{(t)}+D]$, then there is a unique $\overline{\xi} \in (0,1)$, 
 such that $\overline{H}(\overline{\xi})=0$; moreover, $\overline{H}(x)<0$ for $x \in (0,\overline{\xi})$ and $\overline{H}(x)>0$ for 
 $x \in (\overline{\xi},1)$. \label{5bi}
 \item If $\mathbb{E}[D^{(t)}D] \leq \mathbb{E}[D^{(t)}+D]$, then $\overline{H}(x)<0$ for $x \in (0,1)$. \label{5bii}
 \end{enumerate}
 \end{lem}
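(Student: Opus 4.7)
The plan is to mirror the proof of Lemma~\ref{lem5}: reduce the analysis of $\overline{H}$ to that of the auxiliary function $\overline{\phi}(x):=\overline{H}(x)/x$, show that $\overline{\phi}$ is concave on $(0,1]$, and then read off the claimed sign/root structure from the values of $\overline{\phi}$ and $\overline{\phi}'$ at the endpoints.

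First, I would check the boundary behavior of $\overline{H}$. Using $\overline{h}(x)=\mathbb{E}[D^{(t)}x^{D^{(t)}}]+x\mathbb{E}[D^{(r)}x^{D^{(t)}}]$, one sees directly that $\overline{H}(0)=0$ (both terms of $\overline{h}$ vanish at $0$) and $\overline{H}(1)=\lambda-\lambda_t-\lambda_r=0$. Differentiating,
\begin{equation*}
\overline{H}'(1)=2\lambda-\mathbb{E}[(D^{(t)})^2]-\mathbb{E}[D^{(r)}]-\mathbb{E}[D^{(r)}D^{(t)}]
=\mathbb{E}[D+D^{(t)}]-\mathbb{E}[D^{(t)}D],
\end{equation*}
so $\overline{H}'(1)<0$ in case (i) and $\overline{H}'(1)\geq 0$ in case (ii). A short computation also gives $\overline{H}'(0)=-\overline{h}'(0)=-\bigl(\mathbb{P}(D^{(t)}=1)+\mathbb{E}[D^{(r)}\mathbf{1}_{\{D^{(t)}=0\}}]\bigr)$, which by Condition~\ref{degden}(iv) (since $\mathbb{P}(D=1)=p_{0,1}+p_{1,0}>0$ forces at least one of the two summands to be positive) is strictly negative.

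Next, I would rewrite
\begin{equation*}
\overline{\phi}(x)=\lambda x-\sum_{k,l}lp_{k,l}x^{l-1}-\sum_{k,l}kp_{k,l}x^{l},
\end{equation*}
and differentiate twice to get
\begin{equation*}
\overline{\phi}''(x)=-\sum_{k,l}l(l-1)(l-2)p_{k,l}x^{l-3}-\sum_{k,l}kl(l-1)p_{k,l}x^{l-2}\le 0,
\end{equation*}
with strict inequality unless $p_{k,l}=0$ whenever $l\ge 3$, or whenever $k\ge 1$ and $l\ge 2$. In the latter degenerate case, $\overline{\phi}$ is affine, and Condition~\ref{degden}(iv) together with $\overline{\phi}(0^{+})=-\mathbb{P}(D^{(t)}=1)-\sum_k kp_{k,0}<0$ forces $\overline{\phi}(x)<0$ on $(0,1)$, which already handles both cases in the degenerate regime.

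In the generic (strictly concave) regime, case (ii) follows because $\overline{\phi}'(1)=\overline{H}'(1)-\overline{H}(1)\ge 0$ and strict concavity yield $\overline{\phi}'(x)>0$ for $x\in(0,1)$, hence $\overline{\phi}(x)<\overline{\phi}(1)=0$ on $(0,1)$. For case (i), $\overline{H}'(1)<0$ gives $\overline{H}>0$ just below $1$, while $\overline{H}'(0)<0$ gives $\overline{H}<0$ just above $0$, so the intermediate value theorem produces at least one root $\overline{\xi}\in(0,1)$. Uniqueness comes from strict concavity of $\overline{\phi}$ together with $\overline{\phi}(1)=0$: a strictly concave function cannot have two distinct zeros in $(0,1)$ in addition to the zero at $1$. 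The sign assertion on either side of $\overline{\xi}$ then follows from concavity and the boundary signs already established.

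The only delicate point I expect is the handling of the degenerate case where $\overline{\phi}$ is merely affine, so that concavity alone gives no strict inequalities; the saving argument is that Condition~\ref{degden}(iv) forces $\overline{\phi}(0^{+})<0$, pinning down the slope and the sign on $(0,1)$. Everything else is bookkeeping analogous to Lemma~\ref{lem5}.
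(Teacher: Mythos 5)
Your proposal is correct and follows essentially the same route as the paper: compute $\overline{H}(0)=\overline{H}(1)=0$, $\overline{H}'(1)=\mathbb{E}[D+D^{(t)}]-\mathbb{E}[D^{(t)}D]$ and $\overline{H}'(0)<0$ via Condition~\ref{degden}(iv), establish concavity of $\overline{\phi}(x)=\overline{H}(x)/x$, and conclude by the intermediate value theorem plus strict concavity for uniqueness. Your handling of the degenerate (affine) case via $\overline{\phi}(0^{+})<0$ and $\overline{\phi}(1)=0$ is just a rephrasing of the paper's observation that $\overline{H}'(1)>0$ there, so no substantive difference.
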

 \begin{proof}
  Remark that $\overline{H}(0)=\overline{H}(1)=0$ 
 and $\overline{H}'(1)=2\mathbb{E}[D]-\mathbb{E}[(D^{(t)})^2]-\mathbb{E}[(D^{(r)})]-\mathbb{E}[D^{(r)}D^{(t)}]=\mathbb{E}[D+D^{(t)}]
 -\mathbb{E}[D^{(t)}D]$. 
 Furthermore we define $\overline{\phi}(x):=\overline{H}(x)/x = \lambda x- \sum_{k,l}lp_{k,l}x^{l-1}- \sum_{k,l}kp_{k,l}x^l$, which is a 
 concave function on $(0,1]$, 
 in fact, strictly concave unless $p_{k,l}=0$ whenever $l > 2$, or $l=2$ and $k\geq 1$ , 
 in which case $\overline{H}'(1)=\sum_{k\geq 0} p_{k,1}+\sum_{k\geq 0}k p_{k,0}\geq p_{1,0}+p_{0,1}>0$ by Condition \ref{degden}(iv).
 
 In case (\ref{5bii}), we thus have $\overline{\phi}$ concave and $\overline{\phi}'(1)=\overline{H}'(1)-\overline{H}(1)\geq 0$, with 
 either 
 the concavity 
 or the above inequality strict, and 
 thus $\overline{\phi}'(x)>0$ for all $x\in (0,1)$, whence $\overline{\phi}(x)<\overline{\phi}(1)=0$ for $x\in (0,1)$.
 
 In case (\ref{5bi}), $\overline{H}'(1)<0$, and thus $\overline{H}(x)>0$ for $x$ close to $1$. Further, in case (\ref{5bi}),
 \begin{equation}
  \overline{H}'(0)=-\sum_k p_{k,1}-\sum_k kp_{k,0}\leq -p_{1,0}-p_{0,1}<0 
 \end{equation}
 by Condition \ref{degden}(iv), which implies that $\overline{H}(x)<0$ for $x$ close to $0$. Hence there is at least one 
 $\overline{\xi} \in (0,1)$
 with $\overline{H}(\overline{\xi})=0$. Now, since $\overline{H}(x)/x$ is strictly concave and also $\overline{H}(1)=0$, 
 there is at most one such $\overline{\xi}$. This proves the result.

 \end{proof}
 
 \begin{proof}[Proof of Theorem~\ref{inf1bar}]
  Let $\overline{\xi}$ be the zero of $\overline{H}$ given by Lemma \ref{lem5b}(\ref{5bi}) and let $\overline{\tau}:=-\ln\overline{\xi}$. 
 Then, by Lemma \ref{lem5b},
  $\overline{H}(e^{-t})>0$ for $0<t<\overline{\tau}$, and thus $\inf_{t\leq \overline{\tau}}\overline{H}(e^{-t})=0$. Consequently, 
 (\ref{atb}) implies
 \begin{equation}
 \label{at3b}
 n^{-1}\inf_{t\leq \overline{\tau}}\widetilde{\overline{A}}(t) = n^{-1}\inf_{t\leq \overline{\tau}}\widetilde{\overline{A}}(t)
 -\inf_{t\leq \overline{\tau}}\overline{H}(e^{-t}) \xrightarrow{p} 0.
 \end{equation}
 Further, by Condition \ref{degden}(iii), $d_{max}=O(n^{1/2})$, and thus $n^{-1}d_{max} \to 0$. Consequently, by (\ref{at2b}) and 
 (\ref{at3b})
 \begin{equation}
 \label{at5b}
 \sup_{t\leq \overline{\tau}} n^{-1}\left|\overline{A}(t)-\widetilde{\overline{A}}(t)\right| 
 = \sup_{t\leq \overline{\tau}} n^{-1}\left|\widetilde{\overline{S}}(t) - \overline{S}(t)\right|\xrightarrow{p} 0
 \end{equation}
 and thus, by (\ref{atb}), 
 \begin{equation}
 \label{at4b}
  \sup_{t\geq 0}\left| n^{-1}\overline{A}(t)-\overline{H}(e^{-t})\right|\xrightarrow{p} 0 .
 \end{equation}
 
 Let $0<\epsilon<\overline{\tau}/2$. Since $\overline{H}(e^{-t})>0$ on the compact interval $[\epsilon,\overline{\tau}-\epsilon]$, 
 (\ref{at4b}) implies 
 that whp $\overline{A}(t)$ remains positive on $[\epsilon,\overline{\tau}-\epsilon]$, and thus $\overline{\text{C}}$\ref{cb1} is not 
 performed during this interval.
 
 On the other hand, again by Lemma \ref{lem5b}(\ref{5bi}), $\overline{H}(e^{-\overline{\tau}-\epsilon})<0$ and (\ref{atb}) implies 
 $n^{-1}\widetilde{\overline{A}}(\overline{\tau}+\epsilon) \xrightarrow{p} \overline{H}(e^{-\overline{\tau}-\epsilon})$, while 
 $\overline{A}(t)(\overline{\tau}+\epsilon)\geq 0$. Thus, with $\delta := \left|\overline{H}(e^{-\overline{\tau}-\epsilon})\right|/2>0$, 
 whp
 \begin{equation}
 \label{tau1b}
 \widetilde{\overline{S}}(\overline{\tau}+\epsilon) -  \overline{S}(\overline{\tau}+\epsilon) =\overline{A}(t)(\overline{\tau}+\epsilon)
 -\widetilde{\overline{A}}(\overline{\tau}+\epsilon) \geq -\widetilde{\overline{A}}(\overline{\tau}+\epsilon)>n\delta ,
 \end{equation}
 while (\ref{at5b}) implies that $ \widetilde{\overline{S}}(\overline{\tau}) - \overline{S}(\overline{\tau})<n\delta$ whp. Consequently, 
 whp 
 $ \widetilde{\overline{S}}(\overline{\tau}+\epsilon) - \overline{S}(\overline{\tau}+\epsilon)>\widetilde{\overline{S}}(\overline{\tau}) 
 -  \overline{S}(\overline{\tau})$, 
 so $\overline{\text{C}}$\ref{cb1} is performed between $\overline{\tau}$ and $\overline{\tau}+\epsilon$.
 
 Let $\overline{T}_1$ be the last time that $\overline{\text{C}}$\ref{cb1} is performed before $\overline{\tau}/2$, let $y_n$ be the 
 sleeping vertex declared active 
 at this point of time and let $\overline{T}_2$ be the next time $\overline{\text{C}}$\ref{cb1} is performed. We have shown that for 
 any $\epsilon>0$, whp 
 $0\leq \overline{T}_1 \leq \epsilon$ and $\overline{\tau}-\epsilon \leq \overline{T}_2 \leq \overline{\tau}+\epsilon$; in other words, 
 $\overline{T}_1\xrightarrow{p} 0$ and $\overline{T}_2\xrightarrow{p} \overline{\tau}$.
 
 We next use the following lamma.
 
 \begin{lem}
 \label{lem6b}
 Let $\overline{T}_1^*$ and $\overline{T}_2^*$ be two (random) times when $\overline{\text{C}}$\ref{cb1} are performed, with 
 $\overline{T}_1^* \leq \overline{T}_2^*$, 
 and assume that $\overline{T}_1^* \xrightarrow{p} t_1$ and $\overline{T}_2^*\xrightarrow{p} t_2$ where $0 \leq t_1 \leq t_2 \leq 
 \overline{\tau}$. 
 If $\overline{C}$ is the union of all the informer vertices reached between $\overline{T}_1^*$ and $\overline{T}_2^*$, then
 
 \begin{equation}
 \label{lem6b2}
 \left|\overline{C}\right|/n \xrightarrow{p} \overline{g}(e^{-t_1})-\overline{g}(e^{-t_2}).
 \end{equation}
 \end{lem}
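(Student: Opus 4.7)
The plan is to mirror the proof of Lemma~\ref{lem6} in the forward process, adapted to the reverse dynamic. First I would observe that the set $\overline{C}$ of vertices activated during $[\overline{T}_1^{*},\overline{T}_2^{*}]$ satisfies
\[
|\overline{C}|=\sum_{k,l}\bigl(\overline{V}_{k,l}(\overline{T}_1^{*}-)-\overline{V}_{k,l}(\overline{T}_2^{*}-)\bigr),
\]
since sleeping vertices can only leave the sleeping pool through activation, and no further activations occur strictly inside the intervals between invocations of $\overline{\text{C}}\ref{cb1}$.

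Next I would replace $\overline{V}_{k,l}$ by its idealised counterpart $\widetilde{\overline{V}}_{k,l}$ at a cost of $o_p(n)$. The key estimate is
\[
\sup_{t\le\overline{\tau}}\sum_{k,l}\bigl(\widetilde{\overline{V}}_{k,l}(t)-\overline{V}_{k,l}(t)\bigr)=o_p(n).
\]
To derive it, I would start from the identity
\[
\widetilde{\overline{S}}(t)-\overline{S}(t)=\sum_{k,l}(ke^{-t}+l)\bigl(\widetilde{\overline{V}}_{k,l}(t)-\overline{V}_{k,l}(t)\bigr),
\]
note that the $(k,l)=(0,0)$ term is identically zero (since isolated vertices have no half-edges and hence are never matched), and observe that for every other $(k,l)$ and every $t\in[0,\overline{\tau}]$ we have the uniform lower bound $ke^{-t}+l\ge e^{-\overline{\tau}}>0$. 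Combining this with the already established bound $\widetilde{\overline{S}}(t)-\overline{S}(t)=o_p(n)$ from (\ref{at5b}) gives the desired estimate.

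Putting the two ingredients together, and using the uniform convergence $n^{-1}\sum_{k,l}\widetilde{\overline{V}}_{k,l}(t)\to\overline{g}(e^{-t})$ from Lemma~\ref{lem3b}(ii), together with $\overline{T}_i^{*}\xrightarrow{p}t_i$ and the continuity of $\overline{g}$, I would conclude
\[
\frac{|\overline{C}|}{n}=\frac{1}{n}\sum_{k,l}\bigl(\widetilde{\overline{V}}_{k,l}(\overline{T}_1^{*}-)-\widetilde{\overline{V}}_{k,l}(\overline{T}_2^{*}-)\bigr)+o_p(1)\xrightarrow{p}\overline{g}(e^{-t_1})-\overline{g}(e^{-t_2}),
\]
which is precisely (\ref{lem6b2}).

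The main obstacle, and the only genuinely new point compared to the forward setting, is exactly the control of $\sum(\widetilde{\overline{V}}_{k,l}-\overline{V}_{k,l})$ by $\widetilde{\overline{S}}-\overline{S}$. In Lemma~\ref{lem6} the telescoping weight was the transmitter degree $j$, bounded below by $1$ on the relevant indices, so no restriction on the time horizon was required. In the reverse dynamic the weight $ke^{-t}+l$ degenerates as $t\to\infty$, which is why one must confine the argument to the interval $[0,\overline{\tau}]$ supplied by Lemma~\ref{lem5b}(i) and separately verify that the borderline index $(k,l)=(0,0)$ contributes nothing.
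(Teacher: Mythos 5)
Your proposal follows the same route as the paper's proof: telescope $|\overline{C}|$ over the counts $\overline{V}_{k,l}$, pass to the idealised counts $\widetilde{\overline{V}}_{k,l}$ at a cost of $o_p(n)$ using the already-established bound $\sup_{t\le\overline{\tau}}(\widetilde{\overline{S}}(t)-\overline{S}(t))=o_p(n)$ from (\ref{at5b}), and finish with Lemma~\ref{lem3b} and continuity. Your treatment of the weights is in fact \emph{more} careful than the paper's one-line estimate $\sum_{i,j}(\widetilde{\overline{V}}_{i,j}-\overline{V}_{i,j})\le\sum_{i,j}j(\widetilde{\overline{V}}_{i,j}-\overline{V}_{i,j})=\widetilde{\overline{S}}-\overline{S}$, which silently discards all indices with $j=0$ and asserts an equality that should be an inequality; your lower bound $ke^{-t}+l\ge e^{-\overline{\tau}}$ for $(k,l)\ne(0,0)$ and $t\le\overline{\tau}$ correctly covers the vertices with $l=0$, $k\ge1$, which the paper's weight $j$ does not.

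The one genuinely flawed step is your dismissal of the $(0,0)$ term. The reason you give --- isolated vertices have no half-edges and are never matched --- is true but beside the point: step $\overline{\text{C}}$\ref{cb1} selects a sleeping vertex \emph{uniformly at random among all sleeping vertices}, so a degree-zero vertex can perfectly well be awakened this way (after which $\overline{\text{C}}$\ref{cb1} fires again immediately), and $\widetilde{\overline{V}}_{0,0}(t)-\overline{V}_{0,0}(t)$ counts exactly the isolated vertices so awakened by time $t$, which is not identically zero when $p_{0,0}>0$. This term carries weight zero in $\widetilde{\overline{S}}-\overline{S}$, so it cannot be controlled by that quantity at all; it requires a separate count of $\overline{\text{C}}$\ref{cb1} invocations. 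For instance, in the intended application one already knows that whp $\overline{\text{C}}$\ref{cb1} is not performed on $[\epsilon,\overline{\tau}-\epsilon]$, and before time $\epsilon$ the number of spontaneous transmitter deaths --- hence the number of occasions on which $\overline{A}$ can hit zero and trigger a run of $\overline{\text{C}}$\ref{cb1} --- is $O_p(n\epsilon)$, with each run awakening a stochastically bounded number of isolated vertices; letting $\epsilon\to0$ gives the needed $o_p(n)$. The paper's own proof glosses over exactly the same point (its inequality also fails on the $j=0$ indices), so this is a shared blemish rather than a defect unique to your argument, but as written your justification of this step is not valid and needs to be replaced by an argument of the above kind.
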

 
 \begin{proof}

 For all $t\geq 0$, we have 
 \begin{align*}
 \sum_{i,j} (\widetilde{\overline{V}}_{i,j}(t) - \overline{V}_{i,j}(t)) &\leq \sum_{i,j} j(\widetilde{\overline{V}}_{i,j}(t) 
 - \overline{V}_{i,j}(t)) = \widetilde{\overline{S}}(t)-\overline{S}(t).
 \end{align*}
 Thus,
 \begin{align*}
 \left|\overline{C}\right|&=\sum(\overline{V}_{k,l}(\overline{T}_1^* -)-\overline{V}_{k,l}(\overline{T}_2^* -))
 =\sum(\widetilde{\overline{V}}_{k,l}(\overline{T}_1^* -)-\widetilde{\overline{V}}_{k,l}(\overline{T}_2^* -)) + o_p(n) \\
 &= n\overline{g}(e^{-\overline{T}_1^*}) - n\overline{g}(e^{-\overline{T}_2^*}) + o_p(n).
 \end{align*}
 
 \end{proof}
 
 Let $\overline{C}'$ be the set of possible influence sources traced up till $\overline{T}_1$ and $\overline{C}''$ be the set of those 
 traced between 
 $\overline{T}_1$ and $\overline{T}_2$. 
 Then, by Lemma \ref{lem6b}, we have that
 \begin{equation}
 \label{bcdash}
 \frac{\left|\overline{C}'\right|}{n} \xrightarrow{p} 0
 \end{equation}
 and
 \begin{equation}
 \label{bcddash}
 \frac{\left|\overline{C}''\right|}{n} \xrightarrow{p} \overline{g}(1) - \overline{g}(e^{-\overline{\tau}}) = 1 - 
 \overline{g}(e^{-\overline{\tau}}).
 \end{equation}
 
 Evidently, $\overline{C}'' \subset \overline{C}(y_n)$ and $\overline{C}(y_n)\subset \overline{C}'\cup \overline{C}''$, therefore
 \begin{equation}
 \label{sdwb}
 \left|\overline{C}''\right| \leq \left|\overline{C}(y_n)\right| \leq \left|\overline{C}'\right|+\left|\overline{C}''\right|
 \end{equation}
 and thus, from (\ref{bcdash}) and (\ref{bcddash}),
 \begin{equation}
  \frac{\left|\overline{C}(y_n)\right|}{n} \xrightarrow{p} 1-\overline{g}(e^{-\overline{\tau}}),
 \end{equation}
 which completes the proof.
 
 \end{proof}

 \begin{proof}[Proof of Theorem~\ref{inf2bar}]
  As in the previous section, we have the following set of Lemmmas, which we state without proof since the only change is notational. As before, 
  assumptions of Theorem \ref{inf1out} continue to hold.
  \begin{lem}
  \label{lem8b}
  $\forall \epsilon>0$, let 
  \begin{equation*}
  \overline{\mathbb{A}}(\epsilon):=\left\{x \in v(G^*(n,(d_i)_1^n)) :\frac{\left|\overline{C}(x)\right|}{n} \geq \epsilon \text{ and }\left|\frac{\left|\overline{C}(x)\right|}{n} - 
  (1-\overline{g}(\overline{\xi}))\right|\geq \epsilon 
 \right\}.
  \end{equation*} 
  Then,
  \begin{equation}
  \forall \epsilon, \quad \frac{\left|\overline{\mathbb{A}}(\epsilon)\right|}{n} \xrightarrow{p} 0.
  \end{equation}
  \end{lem}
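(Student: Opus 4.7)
The plan is to mimic the proof of Lemma~\ref{lem8} verbatim, with the forward-process notation replaced by its barred counterpart, exploiting the fact that the initial activated vertex $y_n$ in the back-propagation algorithm is, by $\overline{\text{C}}$\ref{cb1}, chosen uniformly at random among the sleeping vertices, and the estimates (\ref{bcdash}) and (\ref{bcddash}) just established for $\overline{C}'$ and $\overline{C}''$.

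More precisely, suppose for contradiction that there exist $\delta,\delta'>0$ and a subsequence $(n_k)_{k>0}$ along which
\begin{equation*}
\mathbb{P}\!\left(\frac{|\overline{\mathbb{A}}(\epsilon)|}{n_k}>\delta\right)>\delta'.
\end{equation*}
Because the vertex $y_n$ declared active at time $\overline{T}_1$ is uniformly chosen among sleeping vertices (and, since $\overline{T}_1\xrightarrow{p}0$, whp essentially uniform among all vertices), a routine conditioning on the event $\{|\overline{\mathbb{A}}(\epsilon)|/n_k>\delta\}$ yields
\begin{equation*}
\mathbb{P}(y_{n_k}\in\overline{\mathbb{A}}(\epsilon))>\delta\delta'/2
\end{equation*}
for $k$ large enough. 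Now recall from (\ref{sdwb}) that $|\overline{C}''|\leq|\overline{C}(y_n)|\leq|\overline{C}'|+|\overline{C}''|$, so if $y_{n_k}\in\overline{\mathbb{A}}(\epsilon)$ then either $|\overline{C}'|/n_k\geq \epsilon/2$ or $\bigl||\overline{C}''|/n_k-(1-\overline{g}(\overline{\xi}))\bigr|\geq \epsilon/2$. Hence
\begin{equation*}
\mathbb{P}\!\left(\frac{|\overline{C}'|}{n_k}\geq \tfrac{\epsilon}{2}\ \text{or}\ \left|\tfrac{|\overline{C}''|}{n_k}-(1-\overline{g}(\overline{\xi}))\right|\geq \tfrac{\epsilon}{2}\right)>\delta\delta'/2,
\end{equation*}
which contradicts (\ref{bcdash}) and (\ref{bcddash}) (using that $1-\overline{g}(e^{-\overline{\tau}})=1-\overline{g}(\overline{\xi})$).

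The step most prone to slip is the uniformity claim for $y_{n_k}$: strictly speaking $y_{n_k}$ is uniform among sleeping vertices at time $\overline{T}_1$, not among all vertices. However, since $|\overline{C}'|/n\xrightarrow{p}0$ by (\ref{bcdash}), the pool of sleeping vertices at time $\overline{T}_1$ has size $n(1+o_p(1))$, so any subset of size $\delta n$ still has intersection with the sleeping pool of proportional size, and the uniform selection then picks a vertex from $\overline{\mathbb{A}}(\epsilon)$ with probability at least $\delta\delta'/2$ eventually, as claimed. This is the only point where the argument genuinely differs from simply relabelling the proof of Lemma~\ref{lem8}.
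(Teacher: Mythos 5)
Your argument is correct and is essentially the paper's own: the paper states Lemma~\ref{lem8b} without proof as a purely notational transcription of Lemma~\ref{lem8}, which is exactly the contradiction argument you reproduce, using (\ref{sdwb}), (\ref{bcdash}) and (\ref{bcddash}) in place of their forward counterparts. Your extra care about $y_n$ being uniform only over the sleeping pool (of size $n(1+o_p(1))$ by (\ref{bcdash})) is a sound refinement --- the paper's version of Lemma~\ref{lem8} sidesteps it by using the vertex chosen at time $0$, which is uniform over all of $v(G)$ --- but it does not change the substance of the proof.
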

  
  \begin{lem}
  \label{lem9b}
  For every $\epsilon>0$, let 
  \begin{equation}
  \overline{\mathbb{B}}(\epsilon):=\left\{x \in \overline{C}'\cup \overline{C}'': \left|\overline{C}(x)\right|/n 
 \geq \epsilon \text{ and } \left|\overline{C}(x)\vartriangle \overline{C}^* \right|/n \geq \epsilon\right\}.
  \end{equation}
  Then,
  \begin{equation}
  \label{bepb}
   \forall \epsilon, \quad  \frac{\left|\overline{\mathbb{B}}(\epsilon)\right|}{n} \xrightarrow{p} 0.
  \end{equation}
  \end{lem}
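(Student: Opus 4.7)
The plan is to mirror the argument of Lemma \ref{lem9} for the reverse process, replacing forward quantities by their dual counterparts. The two structural ingredients I need are the containments $\overline{C}^* \subset \overline{C}' \cup \overline{C}''$ and, for every $x \in \overline{C}' \cup \overline{C}''$, $\overline{C}(x) \subset \overline{C}' \cup \overline{C}''$. The first is immediate: by construction $\overline{\text{C}}\ref{cb1}$ is not invoked between $\overline{T}_1$ and $\overline{T}_2$, so the entire back-traced component of $y_n$ is revealed during that epoch. The second follows from the fact that the reverse exploration, upon activating any vertex, kills all of its half-edges via $\overline{\text{C}}\ref{cb2}$ and thereby uncovers every transmitter half-edge that could have sent influence into it; iterating, the whole ancestor set $\overline{C}(x)$ is revealed before $\overline{\text{C}}\ref{cb1}$ is next performed, and hence lies in $\overline{C}' \cup \overline{C}''$.

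With these two containments in hand, the standard symmetric-difference identity $A \triangle B \subset (C \setminus A) \cup (C \setminus B)$ for $A, B \subset C$ gives
\begin{equation}
\overline{C}(x) \triangle \overline{C}^* \subset \bigl[(\overline{C}' \cup \overline{C}'') \setminus \overline{C}(x)\bigr] \cup \bigl[(\overline{C}' \cup \overline{C}'') \setminus \overline{C}^*\bigr].
\end{equation}
Hence $|\overline{C}(x) \triangle \overline{C}^*|/n \geq \epsilon$ forces either $|\overline{C}(x)|/n \leq |\overline{C}' \cup \overline{C}''|/n - \epsilon/2$ or $|(\overline{C}' \cup \overline{C}'') \setminus \overline{C}^*|/n \geq \epsilon/2$. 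Setting
\begin{equation*}
\overline{e}_1 := n^{-1}\bigl|\bigl\{x \in v(G^*(n,(d_i)_1^n)) : \epsilon \leq |\overline{C}(x)|/n \leq |\overline{C}' \cup \overline{C}''|/n - \epsilon/2\bigr\}\bigr|
\end{equation*}
and $\overline{E}_2 := \{|(\overline{C}' \cup \overline{C}'') \setminus \overline{C}^*|/n \geq \epsilon/2\}$, this yields the pointwise bound $|\overline{\mathbb{B}}(\epsilon)|/n \leq \overline{e}_1 + \mathbf{1}_{\overline{E}_2}$.

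To conclude, I would show both terms tend to zero in probability. By (\ref{bcdash}) and (\ref{bcddash}), $n^{-1}|\overline{C}' \cup \overline{C}''| \xrightarrow{p} 1 - \overline{g}(\overline{\xi})$, so any $x$ contributing to $\overline{e}_1$ has $|\overline{C}(x)|/n$ bounded away both from $0$ and from $1 - \overline{g}(\overline{\xi})$; Lemma \ref{lem8b} then gives $\overline{e}_1 \xrightarrow{p} 0$. For $\mathbf{1}_{\overline{E}_2}$, the sandwich (\ref{sdwb}) together with (\ref{bcdash}) and (\ref{bcddash}) yields $n^{-1}|\overline{C}^*| \xrightarrow{p} 1 - \overline{g}(\overline{\xi})$, so $n^{-1}|(\overline{C}' \cup \overline{C}'') \setminus \overline{C}^*| \xrightarrow{p} 0$ and in particular $\mathbb{P}(\overline{E}_2) \to 0$. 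Combining these two facts gives (\ref{bepb}).

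The main obstacle, as in the forward proof, is really the careful justification of the containment $\overline{C}(x) \subset \overline{C}' \cup \overline{C}''$ for $x \in \overline{C}' \cup \overline{C}''$: unlike the forward dynamic where one naturally follows descendants, here one has to check that the back-tracing mechanism genuinely reveals \emph{every} potential source of influence of an activated vertex within the same exploration epoch, which relies on the fact that $\overline{\text{C}}\ref{cb2}$ pairs each newly active half-edge with a transmitter half-edge (the direction required for the reverse propagation). Once this dynamical point is settled, the rest of the proof is a notational transcription of Lemma \ref{lem9}.
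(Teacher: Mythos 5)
Your proposal is correct and coincides with the paper's intended argument: the paper states Lemma \ref{lem9b} without proof, declaring it a purely notational transcription of Lemma \ref{lem9}, and your write-up is exactly that transcription (symmetric-difference containment, the two inclusions $\overline{C}^*\subset\overline{C}'\cup\overline{C}''$ and $\overline{C}(x)\subset\overline{C}'\cup\overline{C}''$, then the bound by $\overline{e}_1+\mathbf{1}_{\overline{E}_2}$ handled via Lemma \ref{lem8b}, (\ref{bcdash}), (\ref{bcddash}) and (\ref{sdwb})). Your extra care in justifying the closure of the back-traced component under the reverse dynamic is a welcome addition but does not change the route.
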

  
  \begin{lem}
  \label{lem10b}
  Let $\overline{T}_3$ be the first time after $\overline{T}_2$ that $\overline{\text{C}}$\ref{cb1} is performed and let $w_n$ be the 
 sleeping vertex activated 
 at this moment. If $\overline{C}'''$ is the set of informer vertices reached between $\overline{T}_2$ and $\overline{T}_3$, then
  \begin{equation}
  \label{bcdddash}
  \frac{\left|\overline{C}'''\right|}{n} \xrightarrow{p} 0.
  \end{equation}
  \end{lem}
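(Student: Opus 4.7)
The plan is to mirror the proof of Lemma~\ref{lem10} in the reverse setting, using the bounds established for the dual process. The key observation is that the discrepancy $\widetilde{\overline{S}}(t) - \overline{S}(t)$ can only increase when step $\overline{\text{C}}$\ref{cb1} is performed, and at each such invocation it grows by at most $d_{max}$. Since exactly one invocation of $\overline{\text{C}}$\ref{cb1} occurs between $\overline{T}_2$ and $\overline{T}_3$ (namely the one at $\overline{T}_3$ itself), and since the analysis for the interval $[0,\overline{T}_2]$ has already been carried out in the proof of Theorem~\ref{inf1bar}, one gets
\begin{equation}
\sup_{t\leq \overline{T}_3}\bigl(\widetilde{\overline{S}}(t)-\overline{S}(t)\bigr)\leq \sup_{t\leq \overline{T}_2}\bigl(\widetilde{\overline{S}}(t)-\overline{S}(t)\bigr) + d_{max}=o_p(n),
\end{equation}
where the last equality uses (\ref{at5b}) together with $d_{max}=O(n^{1/2})$ from Condition~\ref{degden}(iii).

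Next I would exploit (\ref{tau1b}), which says that for any fixed $\epsilon>0$, with $\delta:=|\overline{H}(e^{-\overline{\tau}-\epsilon})|/2>0$,
\begin{equation}
\widetilde{\overline{S}}(\overline{\tau}+\epsilon) -  \overline{S}(\overline{\tau}+\epsilon) > n\delta \quad \text{whp}.
\end{equation}
Were $\overline{T}_3>\overline{\tau}+\epsilon$ to hold, the previous display would force $\widetilde{\overline{S}}(\overline{\tau}+\epsilon) - \overline{S}(\overline{\tau}+\epsilon)\leq \sup_{t\leq \overline{T}_3}(\widetilde{\overline{S}}(t)-\overline{S}(t)) = o_p(n)$, which contradicts the lower bound $n\delta$. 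Hence whp $\overline{T}_3\leq \overline{\tau}+\epsilon$. Combined with $\overline{T}_3>\overline{T}_2\xrightarrow{p}\overline{\tau}$, this yields $\overline{T}_3\xrightarrow{p}\overline{\tau}$.

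Finally I would invoke Lemma~\ref{lem6b} with $\overline{T}_1^{*}=\overline{T}_2$ and $\overline{T}_2^{*}=\overline{T}_3$, both converging in probability to $\overline{\tau}$. This gives
\begin{equation}
\frac{|\overline{C}'''|}{n} \xrightarrow{p} \overline{g}(e^{-\overline{\tau}})-\overline{g}(e^{-\overline{\tau}})=0,
\end{equation}
completing the proof. There is no real obstacle here; the argument is a direct transcription of the forward-process proof, and the only point that requires attention is verifying that the key ingredients (the bound on the single jump of $\widetilde{\overline{S}}-\overline{S}$, the estimate (\ref{tau1b}), and the continuity-type statement of Lemma~\ref{lem6b}) indeed go through in the dual setting, which they do because $\widetilde{\overline{S}}-\overline{S}\geq 0$ with increments bounded by $d_{max}=o_p(n)$ as established in Lemma~\ref{lem4b}.
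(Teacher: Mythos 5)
Your proof is correct and is essentially the same argument as the paper's: the paper states Lemma~\ref{lem10b} without proof, noting it is the notational transcription of Lemma~\ref{lem10}, and your write-up reproduces exactly that forward-process argument (the single $d_{max}$ jump bound, the comparison with (\ref{tau1b}) to get $\overline{T}_3\xrightarrow{p}\overline{\tau}$, and the application of Lemma~\ref{lem6b}).
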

  
  \begin{lem}
  \label{lem11b}
  For every $\epsilon>0$, let 
  \begin{equation}
  \label{cwn2}
  \overline{\mathbb{C}}(\epsilon):=\left\{w \in \left(\overline{C}'\cup \overline{C}''\right)^c : \left|\overline{C}(w)\right|/n 
 \geq \epsilon \text{ and } \left|\overline{C}(w)\vartriangle \overline{C}^* \right|/n \geq \epsilon\right\}.
  \end{equation} 
  Then, we have that
  \begin{equation}
  \label{cwn3} 
  \forall \epsilon, \quad \frac{\left|\overline{\mathbb{C}}(\epsilon)\right|}{n} \xrightarrow{p} 0.
  \end{equation}
  \end{lem}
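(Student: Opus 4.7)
The plan is to mirror the proof of Lemma~\ref{lem11} line for line, since the back-propagation exploration is structurally identical to the forward one once the roles of transmitter and receiver half-edges are swapped. Concretely, I would argue by contradiction exactly as before, leveraging the fact that the vertex activated by $\overline{\text{C}}\ref{cb1}$ at time $\overline{T}_2$ is uniformly chosen among the still-sleeping vertices, and that $\overline{T}_2$ and $\overline{T}_3$ both converge in probability to $\overline{\tau}$ (by Lemma~\ref{lem10b}), so that the component $\overline{C}'''$ explored between them has $o_p(n)$ vertices.

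First I would observe that, by Lemma~\ref{lem8b}, it suffices to show
\begin{equation*}
\frac{\left|\overline{\mathbb{C}}(\epsilon)\cap \overline{\mathbb{A}}^c(\epsilon)\right|}{n} \xrightarrow{p} 0.
\end{equation*}
Assume for contradiction that there exist $\delta,\delta'>0$ and a sequence $(n_k)$ with $\mathbb{P}(|\overline{\mathbb{C}}(\epsilon)\cap \overline{\mathbb{A}}^c(\epsilon)|/n_k>\delta)>\delta'$. Condition on the Configuration Model and on the back-propagation process revealed up through $\overline{C}''$; the vertex $w_{n_k}$ awakened by $\overline{\text{C}}\ref{cb1}$ at $\overline{T}_2$ is then uniform over the sleeping vertices, which lie in $(\overline{C}'\cup\overline{C}'')^c$. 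Computing as in Lemma~\ref{lem11} yields
\begin{equation*}
\mathbb{P}\bigl(w_{n_k}\in \overline{\mathbb{C}}(\epsilon)\cap \overline{\mathbb{A}}^c(\epsilon)\bigr)\geq \delta\delta'
\end{equation*}
for infinitely many $k$.

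Next I would derive the contradiction by the same triangle-type inclusion used for the forward process. Since $\overline{C}^{\ast}\subset \overline{C}'\cup \overline{C}''\cup \overline{C}'''$ and, for $w\in (\overline{C}'\cup \overline{C}'')^c$ chosen at $\overline{T}_2$, also $\overline{C}(w)\subset \overline{C}'\cup \overline{C}''\cup \overline{C}'''$, we have
\begin{equation*}
\overline{C}(w_n)\vartriangle \overline{C}^{\ast}\subset\bigl[(\overline{C}'\cup \overline{C}''\cup \overline{C}''')\setminus \overline{C}(w_n)\bigr]\cup \bigl[(\overline{C}'\cup \overline{C}''\cup \overline{C}''')\setminus \overline{C}^{\ast}\bigr].
\end{equation*}
If $w_n\in \overline{\mathbb{C}}(\epsilon)\cap \overline{\mathbb{A}}^c(\epsilon)$, then $|\overline{C}(w_n)|/n$ is within $\epsilon$ of $1-\overline{g}(\overline{\xi})$ and $|\overline{C}(w_n)\vartriangle \overline{C}^{\ast}|/n\geq \epsilon$; the latter forces either $|\overline{C}(w_n)|/n\leq |\overline{C}'\cup \overline{C}''\cup \overline{C}'''|/n-\epsilon/2$ or $|(\overline{C}'\cup \overline{C}''\cup \overline{C}''')\setminus \overline{C}^{\ast}|/n\geq \epsilon/2$.

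Finally, by (\ref{bcdash}), (\ref{bcddash}), and Lemma~\ref{lem10b}, we have $|\overline{C}'\cup \overline{C}''\cup \overline{C}'''|/n\xrightarrow{p} 1-\overline{g}(\overline{\xi})$, so whp the first event is impossible; and since $\overline{C}^{\ast}=\overline{C}(y_n)\supset \overline{C}''$ together with $|\overline{C}'|/n,|\overline{C}'''|/n\xrightarrow{p}0$ gives $|(\overline{C}'\cup \overline{C}''\cup \overline{C}''')\setminus \overline{C}^{\ast}|/n\xrightarrow{p}0$, the second event also has vanishing probability. This contradicts the lower bound $\delta\delta'$ and completes the proof. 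The only step requiring real care is the conditioning argument which shows $w_{n_k}$ is uniform over $(\overline{C}'\cup\overline{C}'')^c$, so that the bound on $|\overline{\mathbb{C}}(\epsilon)\cap \overline{\mathbb{A}}^c(\epsilon)|/n_k$ translates to a lower bound on the probability that $w_{n_k}$ lands in this set; but this is immediate from the uniform choice rule in $\overline{\text{C}}\ref{cb1}$, exactly as in the forward case.
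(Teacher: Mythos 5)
Your proposal is correct and matches the paper's approach exactly: the paper states Lemma~\ref{lem11b} without proof, noting that it follows from Lemma~\ref{lem11} by purely notational changes, and your argument is precisely that transcription (uniformity of the vertex awakened at $\overline{T}_2$ over $(\overline{C}'\cup\overline{C}'')^c$, the symmetric-difference inclusion via $\overline{C}'\cup\overline{C}''\cup\overline{C}'''$, and the contradiction from (\ref{bcdash}), (\ref{bcddash}) and Lemma~\ref{lem10b}). Any residual looseness in the final $\epsilon$-bookkeeping is inherited verbatim from the paper's own proof of Lemma~\ref{lem11} and is not a gap you introduced.
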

  
  Finally, Lemma \ref{lem9b} and Lemma \ref{lem11b} allow us to conclude that
  \begin{equation}
  \forall \epsilon, \quad  \frac{\left|\overline{\mathbb{C}}^s(\epsilon)\right|+\left|\overline{\mathbb{C}}^L(\epsilon)\right|}{n} 
  \xrightarrow{p} 1.
  \end{equation}
 \end{proof} 
 
 \section{Duality Relation}
 \label{s.Duality}
 The forward and backward processes are linked through the tautology: $y \in C(x) \iff x\in \overline{C}(y)$. To prove the 
 Theorem \ref{dut}, 
 we consider the double sum: $\sum_{x,y \in v(G(n,(d_i)_1^n))}\mathbf{1}(y\in C(x))$.
 
 From here onwards, we abridge $v(G(n,(d_i)_1^n))$ to $v(G)$. Assumptions of Theorem \ref{inf1out} continue to hold throughout this section. 
 We start with the following Proposition. 
 
 \begin{prop}
 \label{dut1}
 We have,
 \begin{align*}
  \mathbf{A}_n &:=\left| n^{-2} \sum_{x,y \in v(G)}\mathbf{1}(y\in C(x))-n^{-2} \sum_{x,y \in v(G)}\mathbf{1}(x\in 
 \overline{C}^*)\mathbf{1}(y\in C(x)\cap C^*)\right| \\
 & \xrightarrow{p} 0 ,
 \end{align*}
 when $n \to \infty$.
 \end{prop}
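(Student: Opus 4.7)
The plan is to exploit the tautology $y\in C(x)\iff x\in \overline{C}(y)$ in order to convert the difference between the two double sums into two error terms, each of which is exactly of the form controlled by the essential--uniqueness statements already proved (Theorem~\ref{inf2out} and Theorem~\ref{inf2bar}).

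First I observe that the second double sum is pointwise dominated by the first, so the absolute value may be dropped and
\[
\mathbf{A}_n=n^{-2}\sum_{x,y\in v(G)}\mathbf{1}(y\in C(x))\bigl[1-\mathbf{1}(x\in\overline{C}^*)\mathbf{1}(y\in C^*)\bigr].
\]
A union bound on the complementary event $\{x\notin \overline{C}^*\}\cup\{y\notin C^*\}$ yields
\[
\mathbf{A}_n\leq n^{-2}\sum_{x,y}\mathbf{1}(y\in C(x))\mathbf{1}(x\notin\overline{C}^*)+n^{-2}\sum_{x,y}\mathbf{1}(y\in C(x))\mathbf{1}(y\notin C^*).
\]

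Next I would apply the tautology to rewrite each term as a single sum,
\[
I_1:=n^{-2}\sum_y|\overline{C}(y)\setminus\overline{C}^*|,\qquad I_2:=n^{-2}\sum_x|C(x)\setminus C^*|,
\]
and then, for any fixed $\epsilon>0$, split the index set of $I_1$ according to the three classes from Theorem~\ref{inf2bar}: if $y\in\overline{\mathbb{C}}^L(\epsilon)$ then $|\overline{C}(y)\setminus\overline{C}^*|\leq|\overline{C}(y)\vartriangle\overline{C}^*|<\epsilon n$; if $y\in\overline{\mathbb{C}}^s(\epsilon)$ then $|\overline{C}(y)\setminus\overline{C}^*|\leq|\overline{C}(y)|<\epsilon n$; the remaining vertices form a set of size $o_p(n)$ by Theorem~\ref{inf2bar}, each contributing at most $n$. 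Summing gives $I_1\leq \epsilon + o_p(1)$. The symmetric argument using Theorem~\ref{inf2out} yields $I_2\leq \epsilon + o_p(1)$.

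Combining, $\mathbf{A}_n\leq 2\epsilon+o_p(1)$ whp, and since $\epsilon>0$ was arbitrary, $\mathbf{A}_n\xrightarrow{p}0$. The one delicate point I expect is recognising the duality switch that rewrites $\sum_{x\notin\overline{C}^*}|C(x)|$ as $\sum_y|\overline{C}(y)\setminus\overline{C}^*|$: this identity is what matches the expression precisely to a quantity controlled by Theorem~\ref{inf2bar}. A more naive bookkeeping on the original forward sum instead seems to demand a priori control of $|\mathbb{C}^L(\epsilon)\setminus\overline{C}^*|$, which the paper will eventually obtain downstream via Theorem~\ref{dut} and Corollary~\ref{du}, so using it here would be circular; the duality switch is precisely what avoids this trap.
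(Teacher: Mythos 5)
Your proposal is correct and follows essentially the same route as the paper: the paper splits $\mathbf{A}_n$ by a triangle inequality through the intermediate sum $n^{-2}\sum_{x,y}\mathbf{1}(y\in C(x)\cap C^*)$ (Lemmas~\ref{dut2} and~\ref{dut3}), which produces exactly your two error terms $n^{-2}\sum_x|C(x)\setminus C^*|$ and, after the same duality switch $y\in C(x)\iff x\in\overline{C}(y)$, $n^{-2}\sum_y|\overline{C}(y)\setminus\overline{C}^*|$, each controlled by the same three-class split via Theorems~\ref{inf2out} and~\ref{inf2bar}. The only differences are cosmetic (union bound on the indicators versus two successive triangle inequalities, and your constant should read $2\epsilon$ per term rather than $\epsilon$), and your remark about the duality switch being the non-circular way to handle the $\overline{C}^*$ restriction matches the paper's Lemma~\ref{dut3} exactly.
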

 
 \begin{proof}
 The Proposition follows from the following two Lemmas.
 
 \begin{lem}
 \label{dut2}
 For any $\epsilon >0$ and $n\to \infty$,
 \begin{equation*}
 \left| n^{-2} \sum_{x,y \in v(G)}\mathbf{1}(y\in C(x))-n^{-2} \sum_{x,y \in v(G)}\mathbf{1}(y\in C(x)\cap C^*)\right| 
 \leq 2\epsilon + R_n^1(\epsilon) ,
 \end{equation*}
 where $R_n^1(\epsilon) \xrightarrow{p} 0$.
 \end{lem}
 
 \begin{proof}
 For $\epsilon>0$, we have
 \begin{align*}
  & \left| n^{-2} \sum_{x,y}\mathbf{1}(y\in C(x))-n^{-2} \sum_{x,y}\mathbf{1}(y\in C(x)\cap C^*)\right| \\
 \leq \quad & n^{-2} \sum_{x}\min (|C(x)|,|C(x)\vartriangle C^*|) \\
  = \quad & n^{-2} \sum_{x \in \mathbb{C}^s(\epsilon)}\min \left(|C(x)|,|C(x)\vartriangle C^*|\right)\\
 & + n^{-2} \sum_{x \in \mathbb{C}^L(\epsilon)} \min \left(|C(x)|,|C(x)\vartriangle C^*|\right)  \\
 & + n^{-2} \sum_{x \notin \mathbb{C}^s(\epsilon)\cup \mathbb{C}^L(\epsilon)} \min \left(|C(x)|,|C(x)\vartriangle C^*|\right) \\
  \leq \quad & n^{-1} \sum_{x \in \mathbb{C}^s(\epsilon)}\epsilon+n^{-1}\sum_{x \in \mathbb{C}^L(\epsilon)}\epsilon 
  +n^{-1}\sum_{x \notin \mathbb{C}^s(\epsilon)\cup \mathbb{C}^L(\epsilon)}1 \\
  \leq \quad & \epsilon + \epsilon + \left( 1 - \frac{|\mathbb{C}^s(\epsilon)| + |\mathbb{C}^L(\epsilon)|}{n} \right).
 \end{align*}
 Taking $R_n^1(\epsilon):=  1 - \frac{|\mathbb{C}^s(\epsilon)| + |\mathbb{C}^L(\epsilon)|}{n}$ and using Theorem \ref{inf2out}, we conclude the proof.
 \end{proof}
 
 \begin{lem}
 \label{dut3}
 For any $\epsilon >0$ and $n\to \infty$,
 \begin{align*}
 &\left| n^{-2} \sum_{x,y \in v(G)}\mathbf{1}(y\in C(x)\cap C^*)-n^{-2} \sum_{x,y \in v(G)}
 \mathbf{1}(x\in \overline{C}^*)\mathbf{1}(y\in C(x)\cap C^*)\right| \\
 &\leq 2\epsilon + R_n^2(\epsilon) ,
 \end{align*}
 where $R_n^2(\epsilon) \xrightarrow{p} 0$.
 \end{lem}
 
 \begin{proof}
 Since $y \in C(x) \iff x\in \overline{C}(y)$, we have
 \begin{equation}
  \sum_{x,y \in v(G)}\mathbf{1}(y\in C(x)\cap C^*) = \sum_{x,y \in v(G)}\mathbf{1}(x\in \overline{C}(y))
 \mathbf{1}(y\in C^*)
 \end{equation}
 and 
 \begin{equation}
  \sum_{x,y}\mathbf{1}(x\in \overline{C}^*)\mathbf{1}(y\in C(x)\cap C^*) = \sum_{x,y}\mathbf{1}(x\in \overline{C}(y)\cap \overline{C}^*)
 \mathbf{1}(y\in C^*).
 \end{equation}
 Consequently,
 \begin{align*}
  &\left| n^{-2} \sum_{x,y}\mathbf{1}(y\in C(x)\cap C^*)-n^{-2} \sum_{x,y}\mathbf{1}(x\in \overline{C}^*)
  \mathbf{1}(y\in C(x)\cap C^*)\right| \\
 &\leq  n^{-2} \sum_{y}\mathbf{1}(y\in C^*) \min \left(|\overline{C}(y)|,|\overline{C}(y)\vartriangle \overline{C}^*|\right) \\
 & \leq n^{-2} \sum_{y}\min \left(|\overline{C}(y)|,|\overline{C}(y)\vartriangle \overline{C}^*|\right).
 \end{align*}
 The result follows by the arguments similar to those in the proof of Lemma \ref{dut2}, with $R_n^2(\epsilon):=  1 - 
 \frac{|\overline{\mathbb{C}}^s(\epsilon)|
  + |\overline{\mathbb{C}}^L(\epsilon)|}{n}$.
 \end{proof}
 
 \end{proof}
 
 Next, we have the following two Propositions, which lead to Theorem \ref{dut}.
 
 \begin{prop}
 \label{dut4}
 For any $\epsilon >0$ and $n\to \infty$,
 \begin{equation}
 \left| n^{-1}|\mathbb{C}^L(\epsilon)|-n^{-1}|\mathbb{C}^L(\epsilon)\cap \overline{C}^*|\right| \leq \alpha^1 \epsilon + R_n^3(\epsilon) ,
 \end{equation}
 where $\alpha^1>0$ is a constant and $R_n^3(\epsilon) \xrightarrow{p} 0$. Analogously,
 \begin{equation}
 \label{dut6}
 \left| n^{-1}|\overline{\mathbb{C}}^L(\epsilon)|-n^{-1}|\overline{\mathbb{C}}^L(\epsilon)\cap C^*|\right| \leq \alpha^2 \epsilon + 
 R_n^4(\epsilon) 
 \end{equation}
 where $\alpha^2>0$ is a constant and $R_n^4(\epsilon) \xrightarrow{p} 0$.
 \end{prop}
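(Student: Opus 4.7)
The plan is to exploit Proposition~\ref{dut1} by rewriting the two double sums it compares as $n^{-2}\sum_x |C(x)|$ and $n^{-2}\sum_{x\in \overline{C}^*}|C(x)\cap C^*|$ respectively, and then splitting the outer sum over $x$ according to the three-way partition $v(G)=\mathbb{C}^L(\epsilon)\cup \mathbb{C}^s(\epsilon)\cup(\mathbb{C}^L(\epsilon)\cup \mathbb{C}^s(\epsilon))^c$. The definition of $\mathbb{C}^L(\epsilon)$ will let me replace $|C(x)|$ and $|C(x)\cap C^*|$ each by $|C^*|$ up to an error of order $\epsilon n$, and Theorem~\ref{inf2out} will ensure that the leftover block contributes only an $o_p(1)$ remainder.

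In more detail, for $x\in \mathbb{C}^L(\epsilon)$ the bound $|C(x)\triangle C^*|<\epsilon n$ gives $\bigl||C(x)|-|C^*|\bigr|\leq \epsilon n$ and, using the inclusion $C^*\setminus C(x)\subseteq C(x)\triangle C^*$, also $\bigl||C(x)\cap C^*|-|C^*|\bigr|\leq \epsilon n$; so restricted to this block both double sums equal $n^{-1}|C^*|$ times the relative size of $\mathbb{C}^L(\epsilon)$ (respectively $\mathbb{C}^L(\epsilon)\cap\overline{C}^*$), up to an $O(\epsilon)$ term. For $x\in \mathbb{C}^s(\epsilon)$ both $|C(x)|$ and $|C(x)\cap C^*|$ are dominated by $\epsilon n$, again yielding an $O(\epsilon)$ contribution. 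For $x$ outside $\mathbb{C}^L(\epsilon)\cup\mathbb{C}^s(\epsilon)$ the trivial bound $|C(x)|\leq n$ multiplied by the cardinality $o_p(n)$ coming from Theorem~\ref{inf2out} gives $o_p(1)$. Feeding these estimates into Proposition~\ref{dut1} turns it into $\bigl|n^{-1}|\mathbb{C}^L(\epsilon)|-n^{-1}|\mathbb{C}^L(\epsilon)\cap\overline{C}^*|\bigr|\cdot n^{-1}|C^*|\leq c\,\epsilon + o_p(1)$. The final step is to divide by $n^{-1}|C^*|$, which by Theorem~\ref{inf1out} converges in probability to $1-g(\xi,\xi)>0$ and is therefore bounded below by a positive constant whp, producing the stated bound with $\alpha^1$ and $R_n^3(\epsilon)\xrightarrow{p} 0$.

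For the second inequality \eqref{dut6} I would proceed symmetrically. Using the tautology $y\in C(x)\Leftrightarrow x\in \overline{C}(y)$, the two sums compared in Proposition~\ref{dut1} can be reindexed over $y$ as $n^{-2}\sum_y|\overline{C}(y)|$ and $n^{-2}\sum_y \mathbf{1}(y\in C^*)\,|\overline{C}(y)\cap\overline{C}^*|$. Running the same three-way split but this time based on $v(G)=\overline{\mathbb{C}}^L(\epsilon)\cup\overline{\mathbb{C}}^s(\epsilon)\cup(\overline{\mathbb{C}}^L(\epsilon)\cup\overline{\mathbb{C}}^s(\epsilon))^c$, and invoking Theorems~\ref{inf2bar} and~\ref{inf1bar} in place of their forward counterparts, yields $\bigl|n^{-1}|\overline{\mathbb{C}}^L(\epsilon)|-n^{-1}|\overline{\mathbb{C}}^L(\epsilon)\cap C^*|\bigr|\cdot n^{-1}|\overline{C}^*|\leq c'\epsilon+o_p(1)$; division by $n^{-1}|\overline{C}^*|\xrightarrow{p} 1-\overline{g}(\overline{\xi})>0$ closes the argument.

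The main obstacle is the bookkeeping in the three-way split: each of the blocks must be shown to feed strictly into either the factor of $\epsilon$ or an $R_n(\epsilon)\xrightarrow{p} 0$ remainder, and in particular one must not lose track of the asymmetry between $|C(x)|$ and $|C(x)\cap C^*|$ on $\mathbb{C}^L(\epsilon)$ (which requires the observation $C^*\setminus C(x)\subseteq C(x)\triangle C^*$ mentioned above). Once these routine estimates are in place, the proposition reduces to a harmless division by the asymptotically positive constant $n^{-1}|C^*|$, respectively $n^{-1}|\overline{C}^*|$.
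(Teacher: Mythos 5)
Your proposal is correct and follows essentially the same route as the paper: both approximate the two double sums of Proposition~\ref{dut1} by $n^{-2}|C^*|\cdot|\mathbb{C}^L(\epsilon)|$ and $n^{-2}|C^*|\cdot|\mathbb{C}^L(\epsilon)\cap\overline{C}^*|$ (respectively the barred versions, via the reindexing $y\in C(x)\Leftrightarrow x\in\overline{C}(y)$) using the three-way split from Theorems~\ref{inf2out}/\ref{inf2bar}, and then divide by $n^{-1}|C^*|$ (resp.\ $n^{-1}|\overline{C}^*|$), which is bounded away from $0$ whp by Theorem~\ref{inf1out} (resp.\ \ref{inf1bar}). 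The observation $C^*\setminus C(x)\subseteq C(x)\vartriangle C^*$ that you flag is indeed the point that makes the intersected sum track $|C^*|$ on the $\mathbb{C}^L(\epsilon)$ block, exactly as implicitly used in the paper.
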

 
 \begin{proof}
 
 Remark that
 \begin{align*}
 \sum_{x,y \in v(G)}\mathbf{1}(y\in C(x))=  & \sum_{x\in v(G), y \in \overline{\mathbb{C}}^L(\epsilon)}\mathbf{1}(x\in \overline{C}(y)) 
 +\sum_{x \in v(G), y\in \overline{\mathbb{C}}^s(\epsilon) }\mathbf{1}(x\in \overline{C}(y)) \\
  & +\sum_{x \in v(G), y \notin \overline{\mathbb{C}}^s(\epsilon)\cup \overline{\mathbb{C}}^L(\epsilon)}\mathbf{1}(x\in \overline{C}(y)).
 \end{align*}
 Therefore, using the arguments similar to those in the proof of Lemma \ref{dut2}, we have
 \begin{equation}
 \label{dut7}
 \left| n^{-2} \sum_{x,y \in v(G)}\mathbf{1}(y\in C(x))-n^{-2}|\overline{C}^*|.|\overline{\mathbb{C}}^L(\epsilon)|\right| \leq 2\epsilon 
 + R_n^2(\epsilon) .
 \end{equation}
 
 In the same way,
 \begin{equation*}
 \left| n^{-2} \sum_{x,y \in v(G)}\mathbf{1}(y\in C^*)\mathbf{1}(x\in \overline{C}(y)\cap \overline{C}^*)-n^{-2}|\overline{C}^*|.
 |\overline{\mathbb{C}}^L(\epsilon)\cap C^*|\right| \leq 2\epsilon + R_n^2(\epsilon). 
 \end{equation*}
 
 From the above two equations and using Proposition \ref{dut1}, we have
 \begin{equation*}
 \left| n^{-2} |\overline{C}^*|.|\overline{\mathbb{C}}^L(\epsilon)|-n^{-2}|\overline{C}^*|.|\overline{\mathbb{C}}^L(\epsilon)
 \cap C^*|\right| \leq 4\epsilon + 2R_n^2(\epsilon)+A_n .
 \end{equation*}
 Now using Theorem \ref{inf1out} and taking $\alpha^2:=\frac{5}{1-\overline{g}(\overline{\xi},\overline{\xi})}$ 
 and $R_n^4(\epsilon)= \frac{3R_n^2(\epsilon)+
 2A_n}{1-\overline{g}(\overline{\xi},\overline{\xi})}$, 
 we have the second part of the proposition. The proof of the first part is similar, with $\alpha^1:=\frac{5}{1-g(\xi,\xi)}$ 
 and $R_n^3(\epsilon)= \frac{3R_n^1(\epsilon)+
 2A_n}{1-g(\xi,\xi)}$.
 
 \end{proof}
 
 \begin{prop}
 \label{dut5}
 For any $\epsilon >0$,
 \begin{equation*}
 \left| n^{-2} \sum_{x,y \in v(G)}\mathbf{1}(y\in C(x))-n^{-2}|C^*\cap \overline{\mathbb{C}}^L(\epsilon)|.
 |\mathbb{C}^L(\epsilon)|\right| \leq 3 \epsilon + R_n^1(\epsilon) + R_n^2(\epsilon) 
 \end{equation*}
 \end{prop}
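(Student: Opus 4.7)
The plan is to rewrite $\sum_{x,y}\mathbf{1}(y\in C(x))=\sum_x|C(x)|$ and decompose this double sum by localizing $x$ to the good pioneers $\mathbb{C}^L(\epsilon)$ while simultaneously restricting the vertices in $C(x)$ to their intersection with $\overline{\mathbb{C}}^L(\epsilon)$. First I would split the outer sum according to the near-partition $v(G)=\mathbb{C}^L(\epsilon)\cup\mathbb{C}^s(\epsilon)\cup\text{rest}$. The $\mathbb{C}^s(\epsilon)$ piece contributes at most $\epsilon$ after division by $n^2$, because $|C(x)|<n\epsilon$ there; the ``rest'' contributes at most $R_n^1(\epsilon):=1-n^{-1}(|\mathbb{C}^s(\epsilon)|+|\mathbb{C}^L(\epsilon)|)\xrightarrow{p}0$, since there are at most $nR_n^1(\epsilon)$ such $x$, each with $|C(x)|\leq n$.

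This leaves $n^{-2}\sum_{x\in\mathbb{C}^L(\epsilon)}|C(x)|$, which I would further split as $n^{-2}\sum_{x\in\mathbb{C}^L(\epsilon)}|C(x)\cap\overline{\mathbb{C}}^L(\epsilon)|+n^{-2}\sum_{x\in\mathbb{C}^L(\epsilon)}|C(x)\setminus\overline{\mathbb{C}}^L(\epsilon)|$. For the second piece, exchanging the order of summation via the tautology $y\in C(x)\iff x\in\overline{C}(y)$ gives
\[
\sum_{x\in\mathbb{C}^L(\epsilon)}|C(x)\setminus\overline{\mathbb{C}}^L(\epsilon)|=\sum_{y\notin\overline{\mathbb{C}}^L(\epsilon)}|\mathbb{C}^L(\epsilon)\cap\overline{C}(y)|\leq\sum_{y\notin\overline{\mathbb{C}}^L(\epsilon)}|\overline{C}(y)|,
\]
and splitting the remaining $y$'s into $\overline{\mathbb{C}}^s(\epsilon)$ (where $|\overline{C}(y)|<n\epsilon$) and a small remainder (of size at most $nR_n^2(\epsilon)$ by Theorem~\ref{inf2bar}) bounds this by $n^2\epsilon+n^2R_n^2(\epsilon)$, i.e.\ by $\epsilon+R_n^2(\epsilon)$ after normalization.

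For the first piece I would invoke the defining property of $\mathbb{C}^L(\epsilon)$: since $(C(x)\cap\overline{\mathbb{C}}^L(\epsilon))\triangle(C^*\cap\overline{\mathbb{C}}^L(\epsilon))\subseteq C(x)\triangle C^*$, for every $x\in\mathbb{C}^L(\epsilon)$,
\[
\big||C(x)\cap\overline{\mathbb{C}}^L(\epsilon)|-|C^*\cap\overline{\mathbb{C}}^L(\epsilon)|\big|\leq|C(x)\triangle C^*|<n\epsilon,
\]
so summing over $x\in\mathbb{C}^L(\epsilon)$ and dividing by $n^2$ replaces this piece by $n^{-2}|\mathbb{C}^L(\epsilon)|\cdot|C^*\cap\overline{\mathbb{C}}^L(\epsilon)|$ with an error of at most $\epsilon$. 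Collecting the four contributions ($\epsilon$ from $\mathbb{C}^s(\epsilon)$, $R_n^1(\epsilon)$ from the $x$-remainder, $\epsilon+R_n^2(\epsilon)$ from the $\setminus\overline{\mathbb{C}}^L(\epsilon)$ piece, and $\epsilon$ from the triangle-difference replacement) yields precisely $3\epsilon+R_n^1(\epsilon)+R_n^2(\epsilon)$.

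The main conceptual obstacle is recognizing that one needs \emph{simultaneous} localization on both coordinates: the naive one-sided variants produce $n^{-2}|\mathbb{C}^L(\epsilon)|\cdot|C^*|$ or the already-known $n^{-2}|\overline{\mathbb{C}}^L(\epsilon)|\cdot|\overline{C}^*|$ from equation~(\ref{dut7}), neither of which exhibits the mixed product $|C^*\cap\overline{\mathbb{C}}^L(\epsilon)|\cdot|\mathbb{C}^L(\epsilon)|$ required here. The ``$\setminus\overline{\mathbb{C}}^L(\epsilon)$'' error is the only step that genuinely uses the duality between the forward and backward processes, and controlling it via the swap of summations plus Theorem~\ref{inf2bar} is what makes the whole argument go through.
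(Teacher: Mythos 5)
Your proposal is correct and follows essentially the same route as the paper: you localize the double sum to $x\in\mathbb{C}^L(\epsilon)$, $y\in\overline{\mathbb{C}}^L(\epsilon)$, control the discarded pieces via $\mathbb{C}^s(\epsilon)$, $\overline{\mathbb{C}}^s(\epsilon)$ and the remainders $R_n^1,R_n^2$ (using the swap $y\in C(x)\iff x\in\overline{C}(y)$ for the $y$-side), and then replace $|C(x)\cap\overline{\mathbb{C}}^L(\epsilon)|$ by $|C^*\cap\overline{\mathbb{C}}^L(\epsilon)|$ at a cost of $\epsilon$ per pioneer. In fact your write-up makes explicit the final replacement step that the paper leaves implicit behind ``arguments similar to Lemma~\ref{dut2}.''
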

 
 \begin{proof}
 We can upper bound the double sum thus,
 \begin{align*}
 \sum_{x,y \in v(G)}\mathbf{1}(y\in C(x)) \leq  &\sum_{x\in \mathbb{C}^L(\epsilon),y \in \overline{\mathbb{C}}^L(\epsilon)}
 \mathbf{1}(y\in C(x))\\
 &+\sum_{x\in \mathbb{C}^s(\epsilon),y \in v(G)}\mathbf{1}(y\in C(x))\\
  &+\sum_{x\in v(G),y \in  \overline{\mathbb{C}}^s(\epsilon)}\mathbf{1}(y\in C(x))\\
  &+\sum_{x \notin \mathbb{C}^s(\epsilon)\cup \mathbb{C}^L(\epsilon), y\in v(G)}\mathbf{1}(y\in C(x)) \\
 &+\sum_{x \in v(G), y \notin \mathbb{C}^s(\epsilon)\cup \mathbb{C}^L(\epsilon)}\mathbf{1}(y\in C(x)).
 \end{align*}
 The result follows, once again, by using the arguments similar to those in the proof of Lemma \ref{dut2}. 
 
 \end{proof}
 
 Now, from Proposition \ref{dut5} and (\ref{dut6}) and (\ref{dut7}) from Proposition \ref{dut4}, we can conclude the proof of 
 Theorem \ref{dut}, with 
 $\alpha:=5+\alpha^1$ and $R_n(\epsilon):=R_n^1(\epsilon)+2R_n^2(\epsilon)+R_n^4(\epsilon)$. 
 
 The Corollary \ref{du} follows from Theorem \ref{dut} and Proposition \ref{dut4}.

\subsection*{Acknowledgements}
We thank Ren{\'e} Schott for introducing us to the influence propagation dynamic analysed in this paper through a pre-print of \cite{Comets} and thus motivating this study. We also thank Marc Lelarge for his useful suggestions regarding the analytical tools for exploration on Configuration Model and pointing us to \cite{JanLuc}, which has heavily influenced our approach.


\end{document}